\title{Chow rings of matroids as permutation representations}
\author{Robert Angarone, Anastasia Nathanson, Victor Reiner}
\address{University of Minnesota - Twin Cities, Minneapolis MN 55455}
\email{angar017@umn.edu, natha129@umn.edu, reiner@umn.edu}
\keywords{matroid, building set, nested set, Chow ring, Koszul, log-concave, unimodal, Kahler package, Burnside ring, equivariant, Polya freqency, real-rooted}
\subjclass[2010]{
05B35, 
05E18, 
05E14, 
}
\theoremstyle{plain}
\newtheorem{thm}{Theorem}[section]
\newtheorem{conj}[thm]{Conjecture}
\newtheorem{question}[thm]{Question}
\newtheorem{lemma}[thm]{Lemma}
\newtheorem{prop}[thm]{Proposition}
\newtheorem{cor}[thm]{Corollary}
\theoremstyle{definition}
\newtheorem{defn}[thm]{Definition}
\newtheorem{example}[thm]{Example}
\newtheorem{remark}[thm]{Remark}
\newcommand{\xx}{\mathbf{x}}
\newcommand{\R}{\mathbb{R}} 
\newcommand{\Z}{\mathbb{Z}}
\newcommand{\C}{\mathbb{C}}
\newcommand{\kk}{\mathbb{F}}
\newcommand{\M}{\mathcal{M}} 
\newcommand{\symm}{\mathfrak{S}} 
\newcommand{\DD}{\mathfrak{D}} 
\newcommand{\G}{\mathcal{G}} 
\newcommand{\N}{\mathcal{N}} 
\newcommand{\subgroup}{G}
\newcommand{\Hom}{\mathrm{Hom}} 
\newcommand{\Tor}{\mathrm{Tor}} 
\newcommand{\E}{E} 
\newcommand{\supp}{\mathrm{supp}} 
\newcommand{\fy}{\mathrm{FY}}
\newcommand{\FY}{\mathrm{FY}}
\newcommand{\spn}{\mathrm{span}}
\newcommand{\init}{\mathrm{in}}
\newcommand{\into}{\hookrightarrow}
\renewcommand{\phi}{\varphi}
\renewcommand{\L}{\mathcal{L}}
\DeclareMathOperator\rk{\mathrm{rk}}
\DeclareMathOperator\cork{\mathrm{cork}}
\DeclareMathOperator\Aut{\mathrm{Aut}}
\DeclareRobustCommand{\a}[1]{ {\begingroup\color{orange}  {Anastasia: #1}\endgroup} }
\begin{document}

\maketitle

\begin{abstract}
Given a matroid with a symmetry group, we study the induced group action on the Chow ring of the matroid with respect to symmetric building sets. This turns out to always be a permutation action. Work of
Adiprasito, Huh and Katz showed that the Chow ring satisfies Poincar\'e duality and
the Hard Lefschetz theorem.  We lift these to statements about this permutation action,
and suggest further conjectures in this vein.
\end{abstract}

\section{Introduction}
\label{intro-section}
A {\it matroid} $\M$ is a combinatorial abstraction of lists of vectors $v_1,v_2,\ldots,v_n$ in a vector space, recording only the information about which subsets of the vectors
are linearly independent or dependent, forgetting their coordinates-- see
Section~\ref{matroid-background-section} for definitions and references.
In groundbreaking work, Adiprasito, Huh and Katz \cite{AHK} affirmed long-standing conjectures of Rota--Heron--Welsh and Mason about vectors and matroids via a new methodology.  Their work employed
a certain graded $\Z$-algebra $A=\bigoplus_{k=0}^r A^k$
called the \emph{Chow ring} for a matroid $\M$ of rank $r+1$, introduced by Feichtner and Yuzvinsky \cite{FY} as a generalization of the Chow ring of DeConcini and Procesi's \emph{wonderful compactifications} for hyperplane arrangement complements \cite{DCP}. 
A remarkable integral Gr\"obner basis result proven by Feichtner and Yuzvinsky \cite[Thm. 2]{FY} shows that each homogeneous component of $A$ is free abelian:  $A^k \cong \Z^{a_k}$ for a positive integer sequence $(a_0,a_1,\ldots,a_r)$.

A key step in \cite{AHK} shows not only that 
$(a_0,a_1,\ldots,a_r)$ is
{\it symmetric} and {\it unimodal}, that is,
\begin{align}
\label{symmetry}
&a_k = a_{r-k}\text{ for } k \leq r/2\\
\label{unimodality} 
&a_0 \leq a_1 \leq \cdots \leq a_{\lfloor \frac{r}{2} \rfloor} =
a_{\lceil \frac{r}{2} \rceil} \geq \cdots \geq a_{r-1} \geq a_r,
\end{align}
but in fact proves this as a corollary of something much stronger: the Chow ring $A$ enjoys a trio of properties referred to as the {\it K\"ahler package}, reviewed in Section~\ref{AHK-section} below.  The first of these properties is
{\it Poincar\'e duality}, proving \eqref{symmetry} via a natural
$\Z$-module isomorphism $A^{r-k} \cong \Hom_\Z(A^k,\Z)$.  The second property, called the {\it Hard Lefschetz Theorem}, shows that after tensoring $A$ over $\Z$ with $\R$ to obtain $A_\R=\bigoplus_{k=0}A^k_\R$, one can find {\it Lefschetz elements} $\omega$ in $A^1_\R$ such that multiplication by $\omega^{r-2k}$ gives $\R$-linear isomorphisms $A^k_\R \rightarrow A^{r-k}_\R$ for $k \leq \frac{r}{2}$.  In particular, multiplication by $\omega$ maps $A^k_\R \rightarrow A_\R^{k+1}$ {\it injectively} for $k < \frac{r}{2}$, strengthening the unimodality assertion \eqref{unimodality}.

Feichtner and Yuzvinsky defined the Chow ring $A(\L_\M,\G)$ for any choice of a {\it building set} $\G$ inside the {\it lattice of flats} $\L_\M$ for the matroid $\M$ and gave their integral Gr\"obner basis presentation in that context; these notions are reviewed in Section~\ref{background-section} below.
While the results of \cite{AHK} were proven for the {\it maximal building set} $\G=\L_\M \setminus \{\hat{0}\}$, the Chow ring satisfies the K\"ahler package for any building set (and even for Chow rings of {\it polymatroids}), as shown by Pagaria and Pezzoli \cite[Thm.~4.21]{PagariaPezzoli}.


We are interested  in how the {\it Poincar\'e duality} and {\it Hard Lefschetz} properties interact with symmetry.  We consider any subgroup $\subgroup$ of the group $\Aut(\M)$ of symmetries of the matroid $\M$, assuming that the building set $\G$ is also setwise $\subgroup$-stable.  We observe (see Section~\ref{matroid-background-section} below) that in this situation, $\subgroup$ acts via graded $\Z$-algebra automorphisms
on $A(\L_\M, \G)$, giving $\Z \subgroup$-module structures on each $A^k$, and
$\R \subgroup$-module structures on each $A^k_\R$.  One can also check that $A^r \cong \Z$ with trivial $\subgroup$-action, under one additional technical assumption, that $\G$ contains the ground set of
the matroid--see the proof of Corollary~\ref{integral-equivariant-PD-cor} below.  From
this, the Poincar\'e duality pairing immediately gives 
rise to a $\Z \subgroup$-module isomorphism
\begin{equation}
\label{integral-rep-PD-isomorphism}
A^{r-k} \cong \Hom_\Z(A^k,\Z)
\end{equation}
where $g$ in $\subgroup$ acts on $\varphi$ in $\Hom_\Z(A^k ,\Z)$ via $\varphi \mapsto \varphi \circ g^{-1}$; similarly
$A^{r-k} \cong \Hom_\R(A^k,\R)$ as $\R \subgroup$-modules.  
Furthermore, we observe that
one can pick a Lefschetz element $\omega$ 
which is $\subgroup$-fixed (see Corollary~\ref{AHK-equivariant-Hard-Lefschetz} below), giving $\R \subgroup$-module isomorphisms and injections

\begin{align}
    \label{real-Lefschetz-PD-isomorphism}
    A_\R^{k} &\overset{\sim}{\longrightarrow} A_\R^{r-k} 
     \quad \text{ for }k \leq \frac{r}{2} \nonumber \\
    a &\longmapsto  a \cdot \omega^{r-2k}
\end{align}
\begin{align}
  \label{real-Lefschetz-injection}
        A_\R^{k} &\hookrightarrow A_\R^{k+1} \quad \text{ for }k < \frac{r}{2} \nonumber \\
        a & \longmapsto  a \cdot \omega. 
\end{align}

We wish to view the isomorphism \eqref{real-Lefschetz-PD-isomorphism} and injection \eqref{real-Lefschetz-injection}
as lifting the numerical equality \eqref{symmetry} and inequality \eqref{unimodality} from $\Z$ to the {\it ring of virtual (complex) characters} $R_\C(\subgroup)$.  Recall that this ring $R_\C(G)$ is a subring of the ring of (conjugacy) class functions $\{f: \subgroup \rightarrow \C\}$ with pointwise addition and multiplication.  It is defined as the free $\Z$-submodule with basis given by the irreducible complex characters $\{ \chi_1,\ldots,\chi_M\}$, where the number $M$ of irreducible characters coincides with the number of conjugacy classes of $\subgroup$.  Thus every virtual character $\chi$ in $R_\C(\subgroup)$ has a unique expansion $\chi = \sum_{i=1}^M a_i \chi_i$ for some $a_i \in \Z$.  If $a_i \geq 0$ for $i=1,2,\ldots,M$,
call $\chi$ a {\it genuine character}, and write $\chi \geq_{R_\C(\subgroup)} 0$.
Similarly, write $\chi \geq_{R_\C(\subgroup)} \chi'$ when $\chi-\chi' \geq_{R_\C(\subgroup)} 0$.
There is a surjective ring map
\begin{equation}
\label{characters-to-integers-map}
R_\C(\subgroup) \longrightarrow \Z
\end{equation}
sending a virtual character $\chi$ to its value $\chi(e)$ on the identity $e$ of $\subgroup$, carrying genuine characters $\chi \geq_{R_\C(\subgroup)} 0$ to nonnegative integers $\Z_{\geq 0}$.  Through this map, equalities and inequalities in $R_\C(\subgroup)$ lift inequalities in $\Z$.  For example,
\eqref{real-Lefschetz-PD-isomorphism}, \eqref{real-Lefschetz-injection}
give rise to equalities and inequalities in $R_\C(G)$ of this form
\begin{align}
\label{character-symmetry}
\chi_{A_\R^k}&=\chi_{A_\R^{n-k}} \quad \text{
for }k \leq \frac{r}{2},\\
\label{character-unimodality}
\chi_{A_\R^k} &\leq_{R_\C(G)} \chi_{A_\R^{k+1}}
\quad \text{ for }k < \frac{r}{2},
\end{align}
which lift the equalities and inequalities \eqref{symmetry}, \eqref{unimodality} in $\Z$
 through the map \eqref{characters-to-integers-map}.

Our goal in this paper is to use  Feichtner and Yuzvinsky's Gr\"obner basis result, along with some combinatorics of {\it nested sets} (reviewed in Section~\ref{nested-set-subsection}), to prove a combinatorial strengthening/lifting of the isomorphisms and injections \eqref{integral-rep-PD-isomorphism}, \eqref{real-Lefschetz-PD-isomorphism}, \eqref{real-Lefschetz-injection}.  For the sake of stating this, recall (or see Section~\ref{matroid-background-section} below) that a (simple) matroid $\M$ can be specified by
its {\it lattice of flats} $\L_\M$;  in the case where $\M$ is realized by
a list of vectors $v_1,v_2,\ldots,v_n$ in a vector space, a subset $F \subseteq \{1,2,\ldots,n\}=:E$ is a flat when  $\{v_j\}_{j \in F}$ is linearly closed, meaning that every vector $v_i$ for $i\in E$ that lies in the linear span of $\{v_j\}_{j \in F}$ already has $i$ in $F$.
This $\L_\M$ is a ranked lattice, whose rank function $\rk: \L_\M \rightarrow \{0,1,2\ldots\}$ is modeled after the dimension of the span of $\{v_j\}_{j \in F}$. A building set $\G \subseteq \L_\M$ is a subset of $\L_\M$ satisfying axioms that roughly say every flat $F$ in $\L_\M$ can be ``built" in a certain way from elements of $\G$.
The building set $\G$ distinguishes certain subsets $N=\{F_1,\ldots,F_\ell\} \subset \G$ called {\it $\G$-nested sets}.
To each flat $F$ in the $\G$-nested set $N$, we need a crucial quantity
\begin{equation}
\label{crucial-quantity}
m_N(F):=\rk(F) - \rk(\vee N_{<F})
\end{equation}
where $\vee  N_{<F}$ denotes the lattice join in $\L_\M$ of all elements of $N$ strictly
below $F$.
Then the \emph{Chow ring} $A(\L_\M, \G)$ of $\M$ with respect to the building set $\G$
is presented as a quotient of the
polynomial ring $S:=\Z[x_F]$ having one variable $x_F$ for each flat $F$ in $\G$.  The presentation takes the form 
$
A(\L_\M, \G) := S / (I + J)
$
where $I ,J$ are certain ideals of $S$ defined more precisely in Definition~\ref{Chow-ring-definition} below.
Feichtner and Yuzvinsky exhibited (see Theorem~\ref{FY-GB-theorem}, Corollary~\ref{cor: mon_basis} below) a Gr\"obner basis for $I + J$ that leads to the following standard monomial $\Z$-basis for $A(\L_\M, \G)$,
which we will call the {\it FY-monomials} of $\M$:
$$
\fy:=\left\{x_{F_1}^{m_1}  \cdots x_{F_\ell}^{m_\ell} \colon
N:=\{F_1, \cdots,  F_\ell\} \text{ is } \G\text{-nested, and } 
0 \leq m_i < m_N(F_i) \text{ for }i=1,2,\ldots,\ell.\right\}
$$

The subset $\FY^k$ of FY-monomials $x_{F_1}^{m_1} \cdots x_{F_\ell}^{m_\ell}$ of total degree $m_1+\cdots+m_\ell=k$ then gives a $\Z$-basis for $A^k$.
One can readily check
(see Corollary~\ref{Chow-ring-carries-perm-reps-cor}) that the group $\subgroup$ permutes the $\Z$-basis $\FY^k$ for $A^k$, 
endowing $A^k$ with the structure of
a {\it permutation representation}, or {\it $\subgroup$-set}.  
Our main result, proven in Section~\ref{main-theorem-section}, is this strengthening of the isomorphisms and injections \eqref{integral-rep-PD-isomorphism}, \eqref{real-Lefschetz-PD-isomorphism}, \eqref{real-Lefschetz-injection}.

\begin{thm}
\label{main-theorem}
Let $\M$ be a simple matroid rank $r+1$ on ground set $E$.  Let $\subgroup$
be a group automorphisms of $\M$, and $\G$ a building set in
$\L_\M$
that contains $E$, is setwise $G$-stable, and satisfies this
{\it stabilizer condition}\footnote{This condition was missing in {\tt arXiv} version 2 of this paper--  the authors 
thank R. Pagaria for pointing out the issue.}:
\begin{equation}
\label{eq:stabilizer-condition}
  \text{for any }\G\text{-nested set }
   N=\{F_i\}_{i=1,\ldots,\ell}, \text{ if }g \in G\text{ has } 
    g(N)=N, \text{ then }g(F_i)=F_i \text{ for } i=1,\ldots,\ell.
\end{equation}
Then there exist
\begin{itemize}
\item[(i)]
$\subgroup$-equivariant  bijections 
$
\pi: \FY^k  \overset{\sim}{\longrightarrow}  \FY^{r-k}
$ for $k \leq \frac{r}{2}$, and
\item[(ii)]
$\subgroup$-equivariant injections
$
\lambda: \FY^k  \hookrightarrow  \FY^{k+1}
$
for $k < \frac{r}{2}$.
\end{itemize}
\end{thm}

\begin{example} 
        Let $\M=U_{4,5}$ be the uniform matroid of rank $4$ on $E=\{1,2,3,4,5\}$, associated to $5$ {\it generic} vectors $v_1,v_2,v_3,v_4,v_5$ in a $4$-space, so that any quadruple $v_i,v_j,v_k,v_\ell$ is linearly independent.  Choose $\G=\G_{\max}=\L_\M\setminus\{\varnothing\}$, the \emph{maximal} building set.
        Then $\L_\M$ has these flats of various ranks:

\begin{center}
\begin{tabular}{|c|c|}\hline
rank & flats $F \in \L_\M$ \\ \hline\hline
 $0$ & $\varnothing$ \\ \hline
 $1$ & $1,2,3,4,5$ \\ \hline
 $2$ & $12,13,14,15,23,24,25,34,35,45$ \\ \hline
 $3$ & $123,124,125,134,135,145,234,235,245,345$ \\ \hline
 $4$ & $E=12345$ \\ \hline
\end{tabular}
\end{center}
The Chow ring $A(\L_\M, \G)=S/(I+J)$, where 
$
S= \Z[ x_i, x_{jk}, x_{\ell m n}, x_E]
$
with
$\{i\}, \{j,k\}, \{\ell, m, n\}$ running through all one, two and three-element subsets of 
$E=\{1,2,3,4,5\}$,
and 
$$
I=\Big( x_F x_{F'} \Big)_{F \not\subset F', F' \not \subset F},
\qquad
J=\bigg( x_i 
+ \sum_{\substack{1 \leq j < k \leq 5\\i \in \{j,k\}}} x_{jk}
+ \sum_{\substack{1 \leq \ell  < m < n \leq 5\\i \in \{\ell,m,n\} }} x_{\ell m n}
\,\,\, + x_E \bigg)_{i=1,2,3,4,5}.
$$
The FY-monomial bases for $A^0,A^1,A^2,A^3$ are shown here,
together with the $\subgroup$-equivariant maps $\lambda$:
$$
\begin{array}{cccccccc}
\bf{FY^0} & & \bf{FY^1}& & \bf{FY^2} & & &\bf{FY^3} \\
 & & & & & & & \\
1 &\overset{\lambda}{\longmapsto}& x_E &\overset{\lambda}{\longmapsto} & x_E^2 &  & &x_E^3 \\
 & & & & & & &\\
  & & x_{ijk} &\overset{\lambda}{\longmapsto} & x_{ijk}^2& & &\\
      & &  1 \leq i<j<k \leq 5 & & & & & \\
       & & & & & & &\\
  & &x_{ij} &\overset{\lambda}{\longmapsto} &x_{ij} \cdot x_E& & &\\
   & & 1 \leq i<j \leq 5& & & &  &\\
\end{array} 
$$
Therefore in this case, the ranks of the free $\Z$-modules $(A^0,A^1,A^2,A^3)$ form the symmetric, unimodal sequence $(a_0,a_1,a_2,a_3)=(1,21,21,1)$. 
Here the bijection $\pi: \fy^0 \rightarrow \fy^3$ 
necessarily maps $1 \longmapsto x_E^3$, and 
the bijection $\pi: \fy^1 \rightarrow \fy^2$  coincides with
the map $\lambda: \fy^1 \rightarrow \fy^2$ above.  
\end{example}

We wish to also view Theorem~\ref{main-theorem} as lifting \eqref{character-symmetry}, \eqref{character-unimodality} from the virtual character ring $R_\C(G)$ to the 
{\it Burnside ring $B(\subgroup)$ of virtual $G$-sets}.  Recall that to define the Burnside ring $B(G)$
(see, e.g., Bouc \cite{Bouc}), one starts with a free $\Z$-module having as basis the $\subgroup$-equivariant isomorphism classes $[X]$ of finite $\subgroup$-sets $X$.  Then $B(\subgroup)$ is the quotient $\Z$-module that mods out by the span of all elements $[X \sqcup Y] - ([X]+[Y])$.  
Multiplication in $B(\subgroup)$ is induced from the rule $[X] \cdot [Y] = [X \times Y]$.
It turns out that $B(\subgroup)$ has a $\Z$-basis given by the isomorphism classes of $\{ [\subgroup/\subgroup_i] \}_{i=1}^N$ of the transitive $G$-sets $G/G_i$ as $\subgroup_1,\ldots,\subgroup_N$ run through representatives of the $\subgroup$-conjugacy classes of subgroups of $\subgroup$. 
Thus every element $b$ of $B(\subgroup)$ has a unique expansion $b = \sum_{i=1}^N a_i [\subgroup/\subgroup_i]$ for some $a_i \in \Z$.  If $a_i \geq 0$ for $i=1,2,\ldots,N$, then write $b \geq_{B(\subgroup)} 0$ and call $b$ a {\it genuine} element of $B(G)$, 
since it is the class $b=[X]$ 
of the (genuine, not virtual) 
$G$-set $X$ that has a total of  $\sum_{i=1}^N a_i$ disjoint $G$-orbits, among which $a_i$ of the orbits are isomorphic to the transitive $G$-set $G/G_i$.
Similarly, write $b \geq_{B(\subgroup)} b'$ when $b-b' \geq_{B(\subgroup)} 0$.  The map which sends the class $[X]$ of a $\subgroup$-set $X$ to the character $\chi_{\C[X]}(g)=\#\{x \in X:g(x)=x\}$ of its $\subgroup$-permutation representation $\C[X]$ gives a natural ring map
\begin{equation}
    \label{Burnside-to-character-map}
B(\subgroup) \longrightarrow R_\C(\subgroup)
\end{equation} 
This map sends genuine elements $b \geq_{B(\subgroup)} 0$ in $B(\subgroup)$ to genuine characters $\chi \geq_{R_\C(\subgroup)} 0$ in $R_\C(G)$.  In this way, equalities and inequalities in $B(\subgroup)$ lift equalities and inequalities in $R_\C(\subgroup)$.
For example, Theorem~\ref{main-theorem} (i),(ii) 
lead to equalities and inequalities in $B(G)$ of this form
\begin{align}
\label{Burnside-symmetry}
[\FY^k]&=[\FY^{n-k}] 
\quad \text{ for }k \leq \frac{r}{2},\\
\label{Burnside-unimodality}
[\FY^k] &\leq_{B(G)} [\FY^{k+1}]
\quad \text{ for }k < \frac{r}{2},
\end{align}
which lift the equalities and inequalities \eqref{character-symmetry}, \eqref{character-unimodality} in $R_\C(G)$
 through the map \eqref{Burnside-to-character-map}.

Before proving Theorem~\ref{main-theorem} in Section~\ref{main-theorem-section}, the background Section~\ref{background-section} reviews properties of building sets of atomic lattices, Chow rings, and matroids. It also collects a few simple observations,
including one new and crucial numerical fact about $\G$-nested sets, Lemma~\ref{lem:crucial-numerical-fact}.
Section~\ref{conjectures-section} pursues the theme of lifting inequalities in $\Z$ to inequalities $R_\C(G)$ and in $B(G)$, going beyond unimodality to conjectures that would extend recent log-concavity and total positivity conjectures for Hilbert functions of Chow rings $A(\L_\M, \G)$.
Section~\ref{sec: further-questions} poses additional questions
and conjectures.

\section*{Acknowledgements}
The authors thank Federico Ardila, Alessio D'Ali, Graham Denham, Chris Eur, Eva-Maria Feichtner, Luis Ferroni, June Huh, Matt Larson, Hsin-Chieh Liao,  Diane Maclagan, Matt Maestroni, Jacob Matherne, Ethan Partida, Connor Simpson, Lorenzo Vecchi and Peter Webb for helpful conversations and references.  They particularly thank Roberto Pagaria for
alerting them to an issue in the previous statement of Theorem~\ref{main-theorem} that omitted the stabilizer condition \eqref{eq:stabilizer-condition}. 
They thank an anonymous referee for helpful suggestions that improved the exposition.  Additionally, they are grateful to Trevor Karn for his wonderful {\tt Sage/Cocalc} code that checks whether a symmetric group representation is a permutation representation.  The authors received partial support from NSF grants 
DMS-1949896, DMS-1745638, DMS-2053288, DMS-2230648. The second author received partial support from PROM project no. POWR.03.03.00-00-PN13/18.


\section{Background}
\label{background-section}

\subsection{Lattices and Building Sets} 
\label{building-set-basics}
Although our results concern matroids, their lattices
of flats, building sets, and Chow rings, Feichtner and Yuzvinsky \cite{FY} actually worked at a higher level of generality: building sets in {\it atomic lattices} and their
Chow rings.  We start with this more general framework,
in order to present their integral Gr\"obner basis result.

Throughout, $\L$ will be a finite lattice, that is a finite poset in which any pair of elements $x,y$ has 
a least upper bound and greatest lower bound, called their join $x \vee y$ and meet $x \wedge y$.  Finiteness then implies than any subset $A \subseteq \L$ also has such 
a join $\vee A$ and meet $\wedge A$;  in particular,
the entire lattice
$\L$ itself has a top element $\hat{1}=\vee \L$ 
and bottom element $\hat{0}=\wedge \L$.  
For elements $F,F'$ in $\L$, the (closed) interval between them will be denoted $[F,F']:=\{ F'' \in \L: F \leq F'' \leq F'\}$.

\begin{defn} \cite[Def.~1]{FY}
    In a finite lattice $\L$, a subset $\G \subseteq \L \setminus \{\hat{0}\}$ is a \emph{building set} if for every $F$ in $\L \setminus \{\hat{0}\}$, letting
    $\max(\G_{\leq F})=\{F_1,\ldots,F_\ell\}$ denote the maximal elements of $\G_{\leq F}:=\{F' \in \G: F' \leq F\}$, one has  a poset isomorphism
    \[
        [\hat{0}, F] 
        \cong 
         [\hat{0},F_1] \times \cdots \times [\hat{0},F_\ell].
    \]
    On the right side, the Cartesian product of posets
    inherits a componentwise ordering from its factors.
\end{defn}

\begin{example}
The \emph{maximal building set} $\G_{\max}:=\L \setminus \{\hat{0}\}$ vacuously satisfies the conditions for a building set, as one always has $\max (\G_{\max})_{\leq F}=\{F\}$. 
At the other extreme, the \emph{minimal building set} $\G_{\min}$ is the set of {\it indecomposable} elements: those whose lower interval $[\hat{0}, F]$ cannot be decomposed as a nontrivial product.

 Note that the {\it atoms} of the lattice, which are
 the elements covering $\hat{0}$, are always indecomposable, and hence included in every building set.
\end{example}

\subsection{Nested sets}
\label{nested-set-subsection}
Each building set $\G \subseteq \L \setminus \{\hat{0}\}$ gives rise to a family of subsets of $\G$
called $\G$-\emph{nested sets}.

\begin{defn}\label{def:nested-sets}
    A subset $N \subset \G$
    is called $\G$-\emph{nested} if 
    for any pairwise incomparable $F_1, F_2, \ldots, F_t \in N$, their join $F_1\vee F_2\vee \cdots \vee F_t$ is not in $\G$. 
    Denote by $\N(\L,\G)$ the collection
    of all $\G$-nested sets.
\end{defn}

Note that $\N(\L,\G)$ is closed under
inclusion, forming an abstract simplicial
complex on vertex set $\G$. In general, every totally ordered subset $N \subset \G$ is vacuously $\G$-nested. 

\begin{example}
When $\G = \G_{\max}$, the $\G$-nested sets are the {\it chains} (=linearly ordered subsets) of $\L \setminus \{\hat{0}\}$. The collection of nested sets, $\N(\L,\G_{\max})$, forms the {\it order complex} for the poset $\L \setminus \{\hat{0}\}$.
\end{example}

\begin{example}
\label{partition-lattice-example}
    Let $\L= \Pi_n$, the {\it lattice of set partitions} of $\{1,2,\ldots,n\}$. 
    The indecomposable elements of this lattice are those set partitions having at most one non-singleton block. After removing the decomposable elements, the remaining lattice now has the Petersen graph as its underlying graph.
    The lattice $\L=\Pi_4$ is displayed on the left below.  The indecomposable elements are in black, and the decomposables in gray.

\begin{figure}[h]  
\centering 
\begin{tikzpicture} [scale=.27,auto=left,every node]
   \node[gray!80] (0) at (10,-1) {\footnotesize $1\vert 2\vert 3\vert 4$};
  
  \node (1) at (-5,5) {\footnotesize $12\vert 3\vert 4$};
  \node (2) at (1,5) {\footnotesize $ 23\vert1\vert 4$};
  \node (3) at (6,5) {\footnotesize $34\vert 1\vert 2$};
  \node (4) at (12,5) {\footnotesize $14\vert 2\vert 3$};
  \node (5) at (18,5) {\footnotesize $13\vert 2\vert 4$};
  \node (6) at (24,5) {\footnotesize $24 \vert 1\vert 3$};
  
  \node (125) at (-5,11) {\footnotesize $123\vert 4$};
  \node[gray!80] (13) at (-1,11) {\footnotesize $12\vert 34$};
  \node (146) at (4,11) {\footnotesize $124\vert 3$};
  \node[gray!80] (24) at (9,11) {\footnotesize $14\vert 23$};
  \node (236) at (14,11) {\footnotesize $234\vert 1$};
  \node (345) at (19, 11) {\footnotesize $134\vert 2$};
  \node[gray!80] (56) at (24, 11) {\footnotesize $13\vert 24$};
  \node (e) at (10,17) {\footnotesize $1234$};
  \foreach \from/\to in {0/1,0/2,0/3, 0/4, 0/5, 0/6,
  1/125, 1/13,1/146, 
  2/125, 2/24, 2/236,
  3/13, 3/236, 3/345,
  4/146, 4/24, 4/345,
   5/125, 5/345, 5/56,
   6/146, 6/236, 6/56,
  e/125, e/13, e/146, e/24, e/56, e/345, e/236}
    \draw (\from) -- (\to);
\end{tikzpicture}
\begin{tikzpicture}
 [scale=.20,auto=left,every node]
   \node[gray!80] (13-24) at (0,1) {\footnotesize $13|24$};
  \node (134) at (-15,1) {\footnotesize $134|2$};
  \node (13) at (-8,1) {\footnotesize $13|2|4$};
  \node (24) at (8,1) {\footnotesize $24|1|3$};
  \node (124) at (15,1) {\footnotesize $124|3$};
  \node (14) at (0,13) {\footnotesize $14|2|3$};
  \node[gray!80] (14-23) at (0,9) {\footnotesize $14|23$};
  \node (23) at (0,5) {\footnotesize $23|1|4$};
  \node (234) at (-7,-5) {\footnotesize $234|1$};
  \node (123) at (7,-5) {\footnotesize $123|4$};
  \node (34) at (-10,-10) {\footnotesize $34|1|2$};
  \node[gray!80] (12-34) at (0,-10) {\footnotesize $12|34$};
  \node (12) at (10,-10) {\footnotesize $12|3|4$};
  \foreach \from/\to in {134/13,13/13-24,13-24/24,24/124,134/14,14/124,124/12,12/12-34,12-34/34,34/134,34/234,234/24,14/14-23,14-23/23,23/234,23/123,13/123,12/123}
    \draw (\from) -- (\to);
\end{tikzpicture}
\end{figure}

The nested set complexes $\N(\L,\G_{\max}), \N(\L,\G_{\min})$ are both two-dimensional simplicial complexes which are cones with
apex/cone vertex $1234$; the base of the cone is the one-dimensional complex on the right above in the case of $\N(\L,\G_{\max})$.  For  $\N(\L,\G_{\min})$, the base of the cone is obtained from the same picture by {\it unsubdividing}  three edges, removing the vertices that correspond to the
decomposables $12|34, 13|24, 14|23$:
\begin{align*}
34|1|2 - 12|34 - 12|3|4 &\quad  \rightsquigarrow \quad 34|1|2 - 12|3|4,\\
13|2|4 - 13|24 - 24|1|3 &\quad \rightsquigarrow \quad 13|2|4 - 24|1|3,\\
14|2|3 - 14|23 - 23|1|4 &\quad \rightsquigarrow \quad 14|2|3 - 23|1|4.\\
\end{align*}
In this $n=4$ case, the graph on the right above is homeomorphic to the {\it Petersen graph}; compare these figures with those in 
\cite[Fig.~2]{ArdilaKlivans}, 
\cite[Fig.~2]{FeichtnerSturmfels}, 
\cite[p.~21]{FY}.  See also Example~\ref{ex:maximal-and-minimal-versus-hypotheses} for further discussion.
\end{example}

Interestingly, the restriction of the poset structure on $\L$ to any of its $\G$-nested sets $N$ becomes very simple.  Recall that a {\it forest poset} (as in, e.g., Bj\"orner and Wachs \cite{BjornerWachs}) is a finite poset in which each element is covered by at most one other element of the poset.  Note that this is {\it stronger} than simply requiring that the Hasse diagram of the poset is a forest, that is, an acyclic graph. 

\begin{lemma}\label{lem:rooted-forest}
 \cite[Prop.~2.4(4)]{PagariaPezzoli}
 For any building set $\G \subseteq \L \setminus \{\hat{0}\}$,
 the poset $\L|_N$ for any $\G$-nested set $N$ is always a forest poset.
 In particular, incomparable $F,F'$ in $N$ have disjoint principal ideals  $N_{\leq F}, N_{\leq F'}$, that is, $N_{\leq F} \cap N_{\leq F'}=\varnothing$.
\end{lemma}

\subsection{Atomic and geometric lattices and matroids}
\label{matroid-background-section}
So far these notions have played no role.  However,
atomicity is important in Feichtner and Yuzvinsky's  Gr\"obner basis presentation for their Chow ring $A(\L,\G)$, while the geometric lattice/matroid assumption implies a crucial numerical identity, Lemma~\ref{lem:crucial-numerical-fact} below.

\begin{defn}
Recall the {\it atoms} of a finite lattice $\L$ are
the elements that cover $\hat{0}$.  One calls $\L$ {\it atomic} if every $F$ in $L$ is the join of the set of atoms lying weakly below $F$ in $\L$.
\end{defn}

When $\L$ is a finite {\it atomic} lattice, the following quantity
is well-defined
for any $F,F'$ in $\L$ with $F \leq F'$:
\begin{equation}
\label{atomic-lattice-metric}
d(F,F')= \min \{d: F'=F \vee a_1 \vee \cdots \vee a_d \text{ for some atoms }a_1,\ldots,a_d \in \L\}.
\end{equation}
Well-definedness follows from the fact that
it is a minimum over a (finite) set of positive integers which is nonempty, as $F'=F \vee a_1 \vee \cdots \vee a_d$ where $\{a_1,\ldots,a_d\}$
is the set of {\it all} atoms of $L$ 
with $a_i \leq F'$.

This function $d(F,F')$ behaves more simply under extra
hypotheses on $\L$.

\begin{defn}
    A finite lattice $\L$ is {\it semimodular} if  $F,F'$ covering $F \wedge F'$ implies
    $F \vee F'$ covers $F, F'$.
\end{defn}

It is well-known \cite[Prop.~3.3.2]{Stanley-EC1} that, for a finite lattice $\L$, this condition is equivalent to being {\it ranked} and having the rank function $\rk: L \rightarrow \{0,1,2,\ldots\}$ obey the {\it semimodular inequality}
\begin{equation}
\label{semimodular-inequality}
\rk(F \vee F') + \rk(F \wedge F') \leq \rk(F)+\rk(F').
\end{equation}

\begin{defn}
    An atomic semimodular lattice $\L$ is called a {\it geometric lattice}.
\end{defn}
\noindent For geometric lattices $\L$, one can check that
the semimodular inequality \eqref{semimodular-inequality} implies this rewriting of \eqref{atomic-lattice-metric}:
\begin{equation}
\label{metric-rewritten}
d(F,F')=\rk(F')-\rk(F).
\end{equation}
\noindent
Finite geometric lattices are known \cite[\S3.3]{Stanley-EC1} to be the same as lattices of flats of finite {\it matroids}, briefly reviewed here. 
On matroids, some excellent references include Oxley \cite{oxley} and Ardila \cite{icm_matroids}.

\begin{defn} \label{flats} \rm
A matroid $\M = (\E, \L_\M)$ consists of a (finite) ground set $\E$ and a collection of subsets $\L_\M =\{F\} \subseteq 2^\E$ called {\it flats},
satisfying these axioms:
\begin{enumerate}
    \item[(F1)] $\E\in \L_\M$. 
    \item[(F2)] If $F, F'\in \L_\M$ , then $F \cap F' \in \L_\M$. 
    \item[(F3)] For any $F\in \L_\M$, and any $i \in \E\setminus F$, there is a unique $F' \in \L_\M$ containing $i$ which {\it covers $F$} in this sense: 
    $F \subsetneq F'$, and no other flat $F''$ has $F \subsetneq F'' \subsetneq F'$. 
\end{enumerate} 
\end{defn}
These axioms combinatorially abstract properties of a {\it realizable matroid} $\M$ coming from a list of vectors $v_1,v_2,\ldots,v_n$
in a vector space over some field $\mathbb{F}$.  
A realizable matroid $\M$ has ground set $E=\{1,2,\ldots,n\}$, and $F \subseteq E$ is a flat in $\L_\M$ when 
the inclusion $F \subseteq 
 \{i \in E: v_i \in \spn_{\mathbb{F}}\{ v_j\}_{j \in F} 
 \}$ is an equality. 
Axioms (F1),(F2) imply that the inclusion poset 
on $\L_\M$ forms a lattice, with {\it meets} $F \wedge F'=F \cap F'$ and {\it joins}
$F \vee F'=\bigcap_{F'' \supseteq F,F'} F''$.
Axiom (F3) further implies that the lattice $\L_\M$ will be {\it geometric}.
The {\it rank} of the matroid $\M$ itself is defined to be $\rk(E)$, which we assume throughout has $\rk(E)=r+1$. 

\begin{remark}
In this paper, we will only consider {\it simple matroids}, that is, 
\begin{itemize}
    \item $\M$ is {\it loopless}, so that $\hat{0}=\bigcap_{F \in \L_\M} F =\varnothing$ is a flat, and 
    \item $\M$ has no {\it parallel elements}, so that the atoms of $\L_\M$ are all singleton flats $F=\{a\}$.
\end{itemize}
This means that $\M$ is completely determined by
the geometric lattice $\L=\L_\M$.  This does not
impose a serious restriction, since the Chow ring $A(\L,\G)$ defined in Section~\ref{Chow-ring-section} below only uses information about the lattice strucutre of 
$\L$ and the choice of
building set $\G \subseteq \L \setminus \{\hat{0}\}$.  
\end{remark}

 The matroid automorphisms $\Aut(\M)$ of $\M$ are the permutations $g$ of the ground set $E$ preserving $\L_\M$ setwise, that is, sending flats $F$ to flats $g(F)$.  These also give poset and
lattice automorphisms of $\L_\M$. For a subgroup $\subgroup \subseteq \Aut(\M)$, a building set $\G$ is called \textit{$\subgroup$-stable} if each $g \in \subgroup$ preserves $\G$ setwise.

\begin{example}
\label{ex:maximal-and-minimal-versus-hypotheses}
For any $\subgroup \subseteq \Aut(\M)$, the maximal and minimal building sets 
$\G_{\max}, \G_{\min}$ are both $\subgroup$-stable.  
On the other hand, Theorem~\ref{main-theorem} imposes two extra technical conditions, the first being that the building set contains the ground set $E=\hat{1}$.
This always holds for $\G_{\max}$.  It holds for $\G_{\min}$ if and only if $\L=\L_\M$ for a {\it connected} (simple) matroid $\M$, that is, one cannot express $\M = \M_1 \oplus \M_2$ as a nontrivial direct sum.  

The second technical condition is the
stabilizer condition \eqref{eq:stabilizer-condition}, saying that a nested set which is setwise stabilized by some 
$g$ in $G$ must also be pointwise stabilized.  Again this always holds for $\G_{\max}$.  However for $\G_{\min}$, this condition can easily fail.  One such example of this type is the Chow ring for the {\it moduli space $\overline{M}_{0,n+1}$ of genus $0$ curves with $n+1$ marked points}; see, e.g., Dotsenko \cite{dotsenko}, Gibney and Maclagan \cite{GibneyMaclagan},  Keel \cite{Keel}, Maestroni and McCullough \cite[Rmk.~2.4]{MaestroniMcCullough}. This Chow ring turns out to be
$A(\L_\M,\G_{\min})$ where $\M$ is the graphic matroid associated to a complete graph on $n$ vertices and $\L_\M=\Pi_n$ is the set partition lattice from Example~\ref{partition-lattice-example}. In Remark~\ref{rmk:conclusion-failure} below, we explain how both the stabilizer condition \eqref{eq:stabilizer-condition} and the conclusion of Theorem~\ref{main-theorem} fail for this Chow ring when $n=12$.


\end{example}

\subsection{A surprisingly important identity} \label{suprising-identity-section}
A certain quantity from
Feichtner and Yuzvinsky \cite[\S3]{FY},
controls their 
Gr\"obner bases: for 
an atomic lattice $\L$, building set $\G$, and $\G$-nested set $N$ with $F \in N$, define
\begin{equation}
\label{atomic-version-of-crucial-quantity}
m_N(F):=d(\vee N_{<F},F)
\end{equation}
where $N_{<F}:=\{F' \in N: F' < F\}$.
For geometric lattices, \eqref{metric-rewritten} lets one rewrite \eqref{atomic-version-of-crucial-quantity} as in the Introduction:
\begin{equation}
\label{crucial-quantity}
m_N(F):=\rk(F) - \rk \left( \vee N_{<F} \right).
\end{equation}

The next identity involving $m_N(F)$ for geometric lattices plays a surprisingly crucial role here.
It appears in the proof of Corollary~\ref{Chow-ring-degrees} bounding the degrees occurring in a Chow ring.
It similarly appears in the proof of Theorem~\ref{main-theorem},
showing certain subsets of FY-monomials, forming products of chains under divisibility, live in intervals of degrees {\it symmetrically placed} with respect to the grading on
the Chow ring $A(\M,\G)$.

\begin{lemma}\label{lem:crucial-numerical-fact}
For any geometric lattice $\L$, any building set $\G$,
and any $\G$-nested set $N$, its join 
$\vee N$ has its rank expressible as
$
  \rk(\vee N)= \sum_{F \in N} m_N(F).
$
\end{lemma}

\begin{proof}
Proceed by induction on $|N|$.  The base case $N=\varnothing$ has $\rk(\vee N)=\rk(\hat{0})=0$ equal to the empty sum.
In the inductive step, let the set of inclusion-maximal elements
$\max(N)=\{F_1,\ldots,F_\ell\}$ 
have size $\ell$, and proceed by cases $\ell=1$
versus $\ell \geq 2$.

\vskip.1in
\noindent
{\sf Case 1.} $\ell=1$.

Here $F_1=\vee N$.
Letting $\hat{N}$ denote the $\G$-nested set $N \setminus \{F_1\}$, one has this calculation:
$$
\sum_{F \in N} m_N(F)
= m_N(F_1) + \sum_{F \in \hat{N}} m_N(F)
= m_N(F_1) + \sum_{F \in \hat{N}} m_{\hat{N}}(F) 
\overset{(*)}{=} m_N(F_1)+ \rk(\vee \hat{N}) 
=\rk(F_1)=\rk(\vee N)
$$
where equality (*) applied the inductive hypothesis to the nested set $\hat{N}$.

\vskip.1in
\noindent
{\sf Case 2.} $\ell \geq 2$.

From \cite[Prop 2.8]{FK}, one has that 
$
\max\G_{\leq \vee N} = \{F_1, F_2, \ldots, F_\ell\},
$
and therefore
$[\hat{0},\vee N] \cong \prod_{i=1}^\ell [\hat{0},F_i]$.
This gives the first equality in the following
calculation, whose remaining equalities are justified below:
\begin{align*}
\rk(\vee N) 
    = \sum_{i=1}^\ell \rk(F_i) 
    \overset{(**)}{=} \sum_{i=1}^\ell  \sum_{F \in N_{\leq F_i}} m_{N_{\leq F_i}}(F)  
    = \sum_{i=1}^\ell \sum_{F \in N_{\leq F_i}} m_N(F) 
    \overset{(***)}{=} \sum_{F \in N} m_N(F) 
\end{align*}
Equality (**) used the assumption $\ell \geq 2$, to apply the inductive hypothesis to each $\G$-nested set $N_{\leq F_i}$. Equality (***) used the fact that one has a disjoint decomposition
$N=\sqcup_{i=1}^\ell N_{\leq F_i}$
by Lemma~\ref{lem:rooted-forest}.
\end{proof}

\subsection{Chow Rings}
\label{Chow-ring-section}
The following definition was introduced by
Feichtner and Yuzvinsky \cite{FY}.

\begin{defn} 
\label{Chow-ring-definition}
For a finite lattice $\L$ and building set $\G$, 
the \emph{Chow ring} $A(\L, \G)$ is the quotient $\Z$-algebra 
    $$
    A(\L, \G) := S/(I+J)
    $$
    where $S=\Z[x_F]_{F \in \G}$ is a polynomial algebra, and $I,J$ are the following ideals of $S$:
    \begin{itemize}
      \item $I$ is the {\it Stanley-Reisner ideal} for the simplicial complex $\N(\L,\G)$, meaning it is
      generated by products $x_{F_1} x_{F_2} \cdots x_{F_t}$ for subsets  $\{F_1, F_2, \ldots, F_t\} \subset \G$ which are {\it not} $\G$-nested.
    \item $J$ is generated by the linear elements
$\displaystyle\sum_{a \in F \in \G} x_F$ for each atom $a$ 
in $\L$.
    \end{itemize}
\end{defn}

We wish to consider subgroups $\subgroup$ of the group $\Aut(\L)$ of poset automorphisms of $\L$, and building sets $\G \subseteq \L \setminus \{\hat{0}\}$ setwise stabilized by $\subgroup$, meaning $g(F) \in \G$ for all $F \in \G$ and $g$ in $\subgroup$.  We claim that in this setting, any $g$ in $\subgroup$ also acts by a $\Z$-algebra automorphism on $A(\L,\G)$ via $g(x_F)=x_{g(F)}$;  the key points are that poset automorphisms $g$ of $\L$
\begin{itemize}
\item preserve order and carry atoms to atoms, thus sending generators of $J$ to other generators of $J$, and 
\item because they
are also lattice automorphisms, they carry $\G$-nested sets to $\G$-nested sets, thus sending generators of $I$ to other generators of $I$.
\end{itemize}

We will also wish to consider $A(\L,\G)$ as a {\it graded} $\Z$-algebra, in which it inherits the standard grading on
$S=\Z[x_F]_{F \in \G}$ for which each variable $x_F$ has
$\deg(x_F)=1$.  This happens because the ideals $I,J \subset S$ are both graded ideals, generated by homogeneous elements of $S$. Also, subgroups $\subgroup \subset \Aut(\L)$ act as above via graded $\Z$-algebra automorphisms, that is, 
$A(\L,\G)=\bigoplus_{k=0}^\infty A^k$
and each $A^k$ becomes a $\Z \subgroup$-module.

It is not yet clear that $A(\L, \G)$ has only
finitely many non-vanishing components $A^k$.
For this, we next 
consider Feichtner and Yuzvinsky's remarkable Gr\"obner basis presentation
mentioned in the Introduction.

\subsection{In praise of the Feichtner--Yuzvinsky Gr\"obner basis}

We recall here a version of Gr\"obner basis theory with respect to monomial orders on polynomial rings 
$
\Z[\xx]:=\Z[x_1,\ldots,x_n]
$
over $\Z$, used in \cite{FY}.

\begin{defn}\rm
A linear order $\prec$ on the set of all monomials $\{ \xx^\alpha:=x_1^{\alpha_1} \cdots x_n^{\alpha_n}\}$ in $\Z[\xx]$ is called a {\it monomial ordering}
if it is a {\it well-ordering} (every subset of monomials has a $\prec$-minimum element) and $\xx^\alpha \prec \xx^\beta$ implies
$\xx^\alpha \cdot \xx^\gamma \prec \xx^\beta \cdot \xx^\gamma$
for all $\alpha,\beta,\gamma$ in $\{0,1,2,\ldots\}^n$.
\end{defn}

\begin{example} \rm
After fixing a linear order on the variables $x_1 < \cdots < x_n$,
the associated {\it lexicographic order} $\prec$ has
$\xx^\alpha \prec \xx^\beta$ if there exists some $k=1,2,\ldots,n$ with  $\alpha_1 = \beta_1, \alpha_2=\beta_2,\ldots,\alpha_{k-1} = \beta_{k-1}$,
but $\alpha_k < \beta_k$.
\end{example}

\begin{defn}\rm
For $f=\sum_\alpha c_\alpha \xx^\alpha$ in $\Z[x_1,\ldots,x_n]$,
let $\init_\prec(f)=\xx^\beta$ denote the $\prec$-largest monomial in $f$ having $c_\beta \neq 0$.  Say $f$ is {\it $\prec$-monic} if the initial monomial $\init_\prec(f)=\xx^\beta$ has its coefficient $c_\beta=\pm 1$.

Having fixed a monomial order $\prec$, for any ideal $I \subset \Z[\xx]$, one says that a subset  $\{g_1,\ldots,g_t\} \subset I$ is a {\it monic Gr\"obner basis} for $I$ (with respect to $\prec$) if each $g_i$ is $\prec$-monic, and 
every $f$ in $I$ has $\init_\prec(f)$ divisible by at least one
of the intial monomials 
$\{ \init_\prec(g_1),\ldots,\init_\prec(g_t)\}$.
One calls $\xx^\alpha$ a {\it standard monomial} for 
$\{g_1,\ldots,g_t\}$ with respect to $\prec$ if
$\xx^\alpha$  is divisible by none of $\{ \init_\prec(g_1),\ldots,\init_\prec(g_t)\}$.
\end{defn}

It should be noted that some ideals $I \subset \Z[\xx]$
have {\it no} monic Gr\"obner basis, e.g. $I=(2) \subset \Z[x]$.  However, whenever $I$ {\it does} have a monic Gr\"obner basis, it has the following strong consequence, proven exactly as for Gr\"obner bases over
$\mathbb{F}[\xx]$ with field cofficients $\mathbb{F}$; see, e.g.,
Cox, Little, O'Shea \cite[\S2.5, 5.3]{CLO}.
\begin{prop}
\label{GB-basis-fact}
Given a monic Gr\"obner basis $\{g_1,\ldots,g_t\}$ for $I \subset S=\Z[\xx]$ with respect to $\prec$, then
\begin{itemize}
\item $I=(g_1,\ldots,g_t)$, that is, $\{g_1,\ldots,g_t\}$ generate $I$ as an ideal.
\item The quotient ring $S/I$ is a free $\Z$-module, with
a $\Z$-basis given by the standard monomials $\{x^\alpha\}$.
\end{itemize}
\end{prop}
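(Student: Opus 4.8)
The plan is to imitate the standard field-coefficient proof of this fact, the only new ingredient being that the hypothesis that each $g_i$ is $\prec$-monic lets the division algorithm run over $\Z$ without ever having to divide by a non-unit. First I would set up a \emph{division (normal form) algorithm}: given $f \in S$, if $f \neq 0$ let $\xx^\beta = \init_\prec(f)$, with coefficient $c_\beta \neq 0$. If $\xx^\beta$ is divisible by some $\init_\prec(g_i) = \xx^{\beta_i}$, write $\xx^\beta = \xx^\gamma \cdot \xx^{\beta_i}$ and replace $f$ by $f - \epsilon_i \, c_\beta \, \xx^\gamma g_i$, where $\epsilon_i = \pm 1$ is the leading coefficient of $g_i$ (so $\epsilon_i^2 = 1$); this cancels the term $c_\beta \xx^\beta$, and the monomial-order axiom forces every newly introduced term to be $\prec \xx^\beta$, so the step strictly lowers $\init_\prec(f)$ (or makes $f$ zero). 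If instead $\xx^\beta$ is divisible by no $\init_\prec(g_i)$, then $\xx^\beta$ is a standard monomial; move the term $c_\beta \xx^\beta$ into a ``remainder'' accumulator and replace $f$ by $f - c_\beta \xx^\beta$, again strictly lowering $\init_\prec(f)$. Since $\prec$ is a well-ordering, the sequence of initial monomials of the working polynomial cannot strictly decrease forever, so the process halts, producing an expression $f = \sum_i q_i g_i + \overline{f}$ with $q_i \in S$ and $\overline{f}$ a $\Z$-linear combination of standard monomials. Crucially, every coefficient produced stays in $\Z$, because at each step we multiplied only by $\epsilon_i = \pm 1$ and by integers already present in $f$.

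For the first bullet, the inclusion $(g_1,\dots,g_t) \subseteq I$ is immediate from $g_i \in I$. For the reverse, take $f \in I$ and run the algorithm to get $f = \sum_i q_i g_i + \overline{f}$; then $\overline{f} = f - \sum_i q_i g_i \in I$. If $\overline{f} \neq 0$, then $\init_\prec(\overline{f})$ is one of the standard monomials appearing in $\overline{f}$, hence divisible by none of $\init_\prec(g_1),\dots,\init_\prec(g_t)$ — contradicting the defining property of a monic Gr\"obner basis. Hence $\overline{f} = 0$ and $f = \sum_i q_i g_i$, so $I = (g_1,\dots,g_t)$.

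For the second bullet, the division algorithm shows that the image in $S/I$ of every $f \in S$ equals the image of its remainder $\overline{f}$, a $\Z$-linear combination of standard monomials; hence the standard monomials span $S/I$ as a $\Z$-module. For $\Z$-linear independence, suppose some nonzero $\Z$-linear combination $h = \sum_{\alpha} c_\alpha \xx^\alpha$ of distinct standard monomials lies in $I$. Then $\init_\prec(h)$ is one of the standard monomials $\xx^\alpha$ with $c_\alpha \neq 0$, again divisible by no $\init_\prec(g_i)$, contradicting the Gr\"obner basis property. Therefore the standard monomials descend to a $\Z$-basis of $S/I$, which is in particular a free $\Z$-module.

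The entire argument is routine; the only point that genuinely diverges from the classical field-coefficient case — and so the part I would write out with care — is the termination-and-coefficient bookkeeping in the division step, where the $\prec$-monic hypothesis is exactly what is needed. After flagging that modification, one may simply refer the reader to the verbatim field-coefficient treatment in \cite[\S2.5, 5.3]{CLO} for the remaining details.
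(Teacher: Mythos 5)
Your proposal is correct and is precisely the standard division-algorithm argument that the paper itself invokes only by reference to Cox--Little--O'Shea, with the right observation that $\prec$-monicity of each $g_i$ is what keeps all coefficients in $\Z$ and makes the field-coefficient proof go through verbatim. Nothing further is needed.
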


The following crucial result appears as \cite[Thm. 2]{FY}.  To state it, define an {\it FY-monomial order} on $S=\Z[x_F]_{F \in \G}$ to be any monomial order
based on a linear order of the variables with $x_F > x_{F'}$ if $F < F'$.

\begin{thm}
\label{FY-GB-theorem}
Let $\L$ be a finite atomic lattice, and $\G \subset \L \setminus \{\hat{0}\}$ a building set.
Then for any choice of an $\FY$-monomial order on the ring $S=\Z[x_F]_{F\in \G}$,
the ideal $I+J$ presenting  $A(\L, \G)=S/(I+J)$
has a monic Gr\"obner basis consisting of the following
elements, along with their $\prec$-initial terms:
\begin{itemize}
\item
all $x_{F_1} x_{F_2} \cdots x_{F_t}$ for sets $\{F_1,F_2,\ldots,F_t\} \subset \G$ which are {\it not} $\G$-nested, each its own $\prec$-initial term,
\item 
for each $\G$-nested set $N$ with maximal elements
$\{F_1,\ldots,F_\ell\}$, and for each $F$ in $\G$ with $F > \vee N$,
\begin{align*}
\text{ the product }\quad x_{F_1} \cdots x_{F_\ell} 
&\left( \sum_{F' \geq F} x_{F'} \right)^
{d(\vee N, F)}\\
\text{ whose $\prec$-initial term is }\quad  
x_{F_1} \cdots x_{F_\ell} \cdot  &x_F^
{d(\vee N, F)}.
\end{align*}
\end{itemize}
\end{thm}

This has the following very strong consequence,
using the notation $m_N(F):=d(\vee N_{<F}, F)$ defined in \eqref{atomic-version-of-crucial-quantity}, where
$d(F,F')=\min\{d:F'=F \vee a_1 \vee \cdots \vee a_d \text{ for atoms }a_i \in \L\}$ was
defined in \eqref{atomic-lattice-metric}.

\begin{cor} (\cite[Cor. 1]{FY}) 
\label{cor: mon_basis}
Under the hypotheses of Theorem~\ref{FY-GB-theorem},
the Chow ring $A(\L,\G)$ is free as a $\Z$-module, with
the $\prec$-standard monomial $\Z$-basis given by the set of $\FY$-monomials
\begin{equation}
\label{FY-monomials-definition}
\fy := \left\{ x_{F_1}^{m_1} \cdots x_{F_\ell}^{m_\ell}: 
N=\{F_1,\ldots,F_\ell\} \text{ is }\G\text{-nested, and }0 \leq m_i <m_N(F_i)\right\}.
\end{equation}
\end{cor}

This has a particularly interesting further consequence for symmetries of $\L$,
inspired by work of H.-C. Liao on the Chow rings of Boolean lattices $\L$
with their maximal building set $\G_{\max}$; see \cite[Thm. 3.6]{Liao_new}.

\begin{cor}
\label{Chow-ring-carries-perm-reps-cor}
Under the hypotheses of Theorem~\ref{FY-GB-theorem},
if one has a subgroup $\subgroup \subseteq
\Aut(\L)$ setwise stabilizing the
building set $\G$, then
$\subgroup$ permutes the set $\FY$,
as well as its subset of degree $k$ monomials $\FY^k \subset \FY$.  

Consequently, the $\Z \subgroup$-modules on the Chow ring $A(\L, \G)$ and each of its homogeneous components $A^k$ lift to $\subgroup$-permutation representations on $\FY$ and each $\FY^k$.  
\end{cor}
\begin{proof}
It was noted that $g$ in $\subgroup$ sends $\G$-nested sets $N$ to
$\G$-nested sets $g(N)$.  Since such $g$ are lattice automorphisms, they have $g(\vee N)=\vee g(N)$.  Also $d(g(F),g(F'))=d(F,F')$, so that
$m_{g(N)}(g(F))=m_N(F)$.
Thus $g$ sends $\fy$-monomials
$x_{F_1}^{m_1} \cdots x_{F_\ell}^{m_\ell}$
to other $\fy$-monomials
$x_{g(F_1)}^{m_1} \cdots x_{g(F_\ell)}^{m_\ell}$.
\end{proof}

\begin{remark} \rm
It is rare to find families of ideals $I$ inside polynomial rings $S$ that are stable under a finite group $\subgroup$ acting on $S$, and which
{\it also have a $\subgroup$-stable Gr\"obner basis} $\{g_i\}$ 
with respect to some monomial order $\prec$.  This occurs, for example, with the {\it Hibi rings} studied in \cite{Hibi}, and the more general $P$-partition rings studied by Feray and the third author in \cite[Thm. 6.3]{FerayR}.  More often, when starting with a $\subgroup$-stable ideal $I$, passing to an intial ideal destroys the $\subgroup$-symmetry. One then
usually needs alternative techniques to work with the quotient $S/I$, as discussed, e.g., by Faug\'ere \cite{Faugere} and Sturmfels \cite[\S 2.6]{Sturmfels}.
\end{remark}

\begin{remark} \rm
Although the $\Z \subgroup$-module structure on $A(\L, \G)$ and $A^k$ are canonical, their lifts to permutation representations on the
sets $\FY$ and $\FY^k$ are not.  In general, one can have two different $\subgroup$-permutation representations on sets $X, X'$ (that is, with no $\subgroup$-equivariant set bijection $X \overset{\sim}{\longrightarrow} X$) but 
with a $\Z \subgroup$-module isomorphism $\Z [X] \cong \Z [X']$; 
see, e.g., Conlon \cite{Conlon}
and Scott \cite{Scott}. 

\end{remark}

When $\L$ is a geometric lattice, the foregoing results
give more precise degree bounds on $A(\L,\G)$. 

\begin{cor}
\label{Chow-ring-degrees}
Let $\L$ be a geometric lattice of rank $r+1$, and $\G \subseteq \L \setminus \{\hat{0}\}$ a building set.
\begin{itemize}
\item[(i)] $A(\L, \G)$ vanishes in degrees strictly above $r$,
that is, $A(\L, \G)=\bigoplus_{k=0}^r A^k$.
\item[(ii)] In fact, $A^r=0$ unless $\G$ contains $\hat{1}$, in which case $A^r$ has $\Z$-basis $\{x_{\hat{1}}^r\}$, and hence one has a $\Z$-module isomorphism
$\deg: A^r \longrightarrow \Z$ sending $x_{\hat{1}}^r \longmapsto 1$.
\end{itemize}
\end{cor}
\begin{proof}
For assertion (i), note that the typical $\FY$-monomial $x_{F_1}^{m_1} x_{F_2}^{m_2} \cdots x_{F_\ell}^{m_\ell}$,
has total degree
\begin{equation}
\label{bounding-FY-monomial-degree}
\sum_{i=1}^\ell m_i
\leq \sum_{i=1}^\ell (m_N(F_i) - 1)
\overset{(*)}{=} \rk(\vee N) - \ell
\leq (r+1) - 1 = r.
\end{equation}
where the inequality (*) used Lemma~\ref{lem:crucial-numerical-fact}.
For assertion (ii), note that
equality in \eqref{bounding-FY-monomial-degree} can occur only if $\ell=1$ and $\rk(\vee N)=r+1$. This 
forces $N=\{\hat{1}\}$, implying that the $\FY$-monomial must be $x_{\hat{1}}^r$.
\end{proof}

\begin{example}
Consider the Boolean lattice $\L=2^{\{1,2,\ldots,n\}}$ of rank $n$, along with these two building sets \begin{align*}
    \G_{\min}&=\{ \{1\},\{2\}, \ldots, \{n\} \},\\
    \G=\G_{\min} \cup \hat{1} &=\{ \{1\},\{2\}, \ldots, \{n\}, \{1,2,\ldots,n\} \}.
\end{align*}
One can check that their Chow rings have these descriptions:
\begin{align*}
    A(\L,\G_{\min})&=\Z[x_1,x_2,\ldots,x_n]/(x_1,x_2,\ldots,x_n)=\Z,\\
    A(\L,\G)&=\Z[x_1,x_2,\ldots,x_n,x_{\hat{1}}]
      /(x_1 x_2 \cdots x_n, \,\, x_1+x_{\hat{1}},\,\,
      x_2+x_{\hat{1}},\,\,\ldots,\,\, x_n+x_{\hat{1}})\\
      &\cong \Z[x_{\hat{1}}]/(x_{\hat{1}}^n)
      =\spn_\Z\{1,x_{\hat{1}},x_{\hat{1}}^2,
      \ldots,x_{\hat{1}}^r\}
\end{align*}
where we recall that $\rk(\L)=n=r+1$, so $r=n-1$.
\end{example}

\begin{remark}
    Here we compare the above definition of the Chow ring with the one used by Adiprasito, Huh and Katz \cite{AHK}. They study  Chow rings $A(\L,\G_{\max})$ for geometric lattices $\L=\L_\M$ with the choice of the maximal building set, but they alter the presentation of $A(\L_\M, \G_{\max})$ to eliminate the variable
    $x_E (=x_{\hat{1}})$ from the polynomial ring $S=\Z[x_F]_{\varnothing \subsetneq F \subseteq E}$.  Specifically they write $A(\L_M, \G_{\max})=\hat{S}/(\hat{I} + \hat{J})$ where $\hat{S}=\Z[x_F]_{\varnothing \subsetneq F \subsetneq E}$ has
    a variable $x_F$ for each nonempty, \emph{proper} flat $F$.  Then $\hat{I}$ is again the ideal generated by the monomials $x_{F} x_{F'}$ where $F, F'$ are not-nested, but $\hat{J}$ is now generated by the elements
    \[
    \displaystyle\sum_{\substack{F \in \L{_\M}:\\a \in F \neq E}} x_F
    - \displaystyle\sum_{\substack{F \in \L_\M:\\ a' \in F \neq E}} x_F
    \]
    for each pair of distinct atoms $a \neq a'$ in $\L_\M$.  One can check this presentation of $A(\L_\M, \G_{\max})$ is equivalent to 
    Definition~\ref{Chow-ring-definition}: mutually inverse isomorphisms are induced by the map $\hat{S} \rightarrow S$ sending $x_F \longmapsto x_F$, and the map $S \rightarrow \hat{S}$ sending $x_F \longmapsto x_F$ for $F \neq E$ and $x_E \longmapsto -\displaystyle\sum_{a \in F \neq E} x_F$ for any atom $a$ in $\L$.
\end{remark}

\begin{remark}
In \cite{BHMPW}, the authors introduce
the {\it augmented Chow ring} $\tilde{A}(\M)$ of a matroid $\M$ on ground set $E$,
and prove that it also satisfies the K\"ahler package.
It was observed by Chris Eur \big(as explained in \cite[\S4, p.~1843]{MaestroniMcCullough}; see also \cite[\S5]{EHL}, \cite[Thm.~5.10]{Liao_new}\big) that $\tilde{A}(\M)$ is isomorphic to an instance of the usual Chow ring for another related matroid. In particular, they showed that $\tilde{A}(\M) = A(\M \times e,\tilde{\G})$, where $\M \times e:=(\M^\perp + e)^\perp$ on the larger ground set $E \sqcup \{e\}$ is the {\it free coextension} of $\M$, and $\tilde{\G}$ is the following (non-maximal) building set 
for $\M \times e$:
$$
\tilde{\G}= \{\{1\},...,\{n\}\} \sqcup \{ F\cup \{e\}: \text{ for all flats  }F \in \L(M)\}.
$$
 Since $\Aut( \M \times e)$ contains $\Aut(\M)$ as a subgroup,
 and since the above building set $\tilde{\G}$ is $\Aut(\M)$-stable, our results will apply
also to the augmented Chow rings of $\M$ with
its action by $\Aut(\M)$.
\end{remark}

\subsection{The K\"ahler package}
\label{AHK-section}
The following theorem compiles
some of the main results of the work of
Adiprasito, Huh and Katz \cite{AHK}, who originally proved the K\"ahler package for Chow rings $A(\L,\G_{\max})$ of geometric lattices $\L$ with their maximal building sets $\G_{\max}$.
The result for other
building sets
follows from a result of Pagaria
and Pezzoli \cite[Thm.~4.21]{PagariaPezzoli}, applying even more generally to Chow rings of {\it polynmatroids} and their building sets.
See also Remark~\ref{Denham-remark} below
for the sketch of an alternate proof\footnote{The authors thank Chris Eur for suggesting that such a proof should exist, and Graham Denham for supplying the details.}
using results of Ardila, Denham and Huh \cite{ADH}.

\begin{thm}
\label{AHK-theorem}
For a simple matroid $\M$ with lattice of flats $\L=\L_\M$ of rank $r+1$, and any choice of building set $\G \subset \L \setminus \{\hat{0}\}$ that contains the ground set $\hat{1}=E$, the Chow ring $A(\L,\G)=\bigoplus_{k=0}^r A^k$ satisfies the 
K\"ahler package:
\begin{itemize}
\item (Poincar\'e duality)\\ 
For every $k \leq \frac{r}{2}$, 
one has a perfect $\Z$-bilinear pairing
\begin{align*}
    A^k \times A^{r-k} &\longrightarrow \Z \\ 
    (a,b) &\longmapsto \deg(a \cdot b)
\end{align*}
that is, $b \longmapsto \varphi_b(-)$ defined by
$\varphi_b(a)=\deg(a \cdot b)$ is a
$\Z$-linear isomorphism $A^{r-k} \cong \Hom_\Z(A^k,\Z)$.
\item[]
\item (Hard Lefschetz)\\
Tensoring over $\Z$ with $\R$, the (real) Chow ring $A_\R(\L,\G)=\sum_{k=0}^r A^k_\R$ contains {\textbf{Lefschetz elements}} $\omega$ in $A^1_\R$, meaning
that $a \mapsto a \cdot \omega^{r-2k}$ is an $\R$-linear isomorphism $A^k_\R \rightarrow A^{r-k}_\R$ for $k \leq \frac{r}{2}$.

In particular, multiplication by $\omega$ is an injection $A^k_\R \rightarrow A_\R^{k+1}$ for $k < \frac{r}{2}$.
\item[]
\item (Hodge-Riemann-Minkowski inequalities)\\
The Lefschetz elements $\omega$ define quadratic forms 
$a \longmapsto (-1)^k \deg(a \cdot \omega^{r-2k} \cdot a)$
on $A^k_\R$ that become positive definite 
upon restriction to the kernel of the map
$A^k_\R \longrightarrow A^{r-k+1}_\R$ that sends 
$a \longmapsto  a \cdot \omega^{r-2k+1}$.
\end{itemize}
\end{thm}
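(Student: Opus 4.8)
The plan is to deduce Theorem~\ref{AHK-theorem} from the corresponding results of Adiprasito, Huh and Katz \cite{AHK}; the only work is to reconcile the presentation of $A(\M)$ used here with the (slightly different) one used there. Recall that \cite{AHK} proves all three assertions---Poincar\'e duality, Hard Lefschetz, and the Hodge--Riemann--Minkowski inequalities---for the Chow ring presented as $\hat{A}(\M):=\hat{S}/(\hat{I}+\hat{J})$, the variant without the variable $x_E$ described in the Remark following Definition~\ref{Chow-ring-definition}, equipped with a degree isomorphism $\widehat{\deg}\colon \hat{A}^r\xrightarrow{\sim}\Z$ normalized by $\widehat{\deg}(x_{F_1}x_{F_2}\cdots x_{F_r})=1$ for every complete flag of nonempty proper flats $\varnothing\subsetneq F_1\subsetneq\cdots\subsetneq F_r\subsetneq E$.

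First I would observe that each of the three properties in the statement is \emph{intrinsic} to the data of a finite graded $\Z$-algebra $A=\bigoplus_{k=0}^{r}A^k$ together with a choice of $\Z$-module isomorphism $A^r\xrightarrow{\sim}\Z$ (and, for Hard Lefschetz and the Hodge--Riemann inequalities, a Lefschetz element of $A^1_\R$); consequently each property is transported by any isomorphism of graded $\Z$-algebras that intertwines the two chosen degree maps. The Remark following Definition~\ref{Chow-ring-definition} already supplies mutually inverse isomorphisms of graded $\Z$-algebras $\Phi\colon\hat{A}(\M)\xrightarrow{\sim}A(\M)$ and $\Psi$ in the reverse direction, with $\Phi(x_F)=x_F$ for every nonempty proper flat $F$. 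It therefore remains only to check that $\Phi$ carries $\widehat{\deg}$ to the degree map $\deg$ of Corollary~\ref{cor: mon_basis}(iii). Since $A^r$ is free of rank one over $\Z$, the two composites $A^r\to\Z$ agree up to a sign $\pm1$, so it suffices to evaluate on a single element: choosing a complete flag of flats $\varnothing\subsetneq F_1\subsetneq\cdots\subsetneq F_r$ with $\rk(F_i)=i$ (so each $F_i$ is proper), one reduces the monomial $x_{F_1}\cdots x_{F_r}$ modulo the monic Gr\"obner basis $\{g_{F,F'}\}$ of Theorem~\ref{FY-GB-theorem}---equivalently, applies the Feichtner--Yuzvinsky ``pulling'' relations---to obtain $x_{F_1}\cdots x_{F_r}=x_E^{\,r}$ in $A(\M)$, whence $\deg\bigl(\Phi(x_{F_1}\cdots x_{F_r})\bigr)=\deg(x_E^{\,r})=1=\widehat{\deg}(x_{F_1}\cdots x_{F_r})$. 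Thus $\Phi$ identifies the two degree maps, and the K\"ahler package transports along $\Phi$ to $A(\M)$ as stated.

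The genuinely substantive content is, of course, entirely imported: the deletion--contraction decomposition of $A(\M)$, the semismallness of the relevant morphisms, and the simultaneous induction on the matroid that yields Poincar\'e duality, Hard Lefschetz and the Hodge--Riemann inequalities all reside in \cite{AHK} and are not reproved here. The only point on our side that is not pure bookkeeping is the orientation check in the previous paragraph: because the Hodge--Riemann quadratic form carries the sign $(-1)^k$, it does not suffice for $\Phi$ merely to match the two socle generators up to sign---it must identify $\widehat{\deg}$ with $\deg$ exactly---and this is precisely what the flag computation $x_{F_1}\cdots x_{F_r}=x_E^{\,r}$ guarantees. I expect this normalization verification to be the only obstacle; everything else follows formally from the transport-of-structure argument together with \cite{AHK}.
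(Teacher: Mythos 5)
Your overall strategy is the same as the paper's: Theorem~\ref{AHK-theorem} is stated there as a compilation of results from \cite{AHK}, with the translation between the two presentations supplied by the remark following Definition~\ref{Chow-ring-definition}, and no further proof is given. Transporting Poincar\'e duality and Hard Lefschetz along the isomorphism $\Phi$ is unproblematic, since neither is sensitive to rescaling the degree map by $\pm 1$.

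However, the one substantive verification you single out --- the normalization check --- is carried out incorrectly, and it fails exactly where you (correctly) say the sign matters. The identity $x_{F_1}\cdots x_{F_r}=x_E^{\,r}$ in $A(\M)$ is false for odd $r$: already for $r=1$ the generator of $J$ attached to the atom $F_1$ reads $x_{F_1}+x_E=0$, so $x_{F_1}=-x_E$. In general, writing $\alpha:=\sum_{i\in F\subsetneq E}x_F$ for a fixed $i$, the relations in $J$ give $\alpha=-x_E$ in $A(\M)$, while \cite{AHK} shows $\widehat{\deg}(\alpha^r)=\widehat{\deg}(x_{F_1}\cdots x_{F_r})=1$; hence $x_{F_1}\cdots x_{F_r}=\alpha^r=(-1)^r x_E^{\,r}$, and $\deg\circ\Phi=(-1)^r\,\widehat{\deg}$. (One can confirm $x_1x_{12}=x_E^2$ directly for the rank-$3$ Boolean matroid, consistent with the sign $(-1)^r$.) Consequently your transport argument multiplies the Hodge--Riemann form by $(-1)^r$, and for odd $r$ it yields negative definiteness on the primitive part rather than positive definiteness: e.g.\ for $r=1$ and $\omega=\sum_{F\subsetneq E}c_Fx_F$ with $c_F>0$ one gets $\deg(\omega)=-\sum_F c_F<0$ on $A^0_\R=\ker(A^0_\R\to A^2_\R)$. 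The repair is to observe that $\omega$ is a Lefschetz element if and only if $-\omega$ is, and that replacing $\omega$ by $-\omega$ multiplies the form $a\mapsto(-1)^k\deg(a\cdot\omega^{r-2k}\cdot a)$ by $(-1)^{r-2k}=(-1)^r$, which exactly compensates the discrepancy between $\deg$ and $\widehat{\deg}$; so the theorem as stated (which asserts only the \emph{existence} of Lefschetz elements) does transport, but via $\omega=-\Phi(\ell)$ for $\ell$ ample in the sense of \cite{AHK} when $r$ is odd. As written, your proof of the Hodge--Riemann part is not correct.
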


Now consider the situation where $\subgroup$ is a subgroup of $\Aut(\L)$ that setwise stabilizes $\G$, and hence acts on $A(\M, \G)$ and each $A^k$.  We point a corollary of the above result that will be refined by Theorem~\ref{main-theorem}(i).

\begin{cor}
\label{integral-equivariant-PD-cor}
If the building set $\G$ contains $\hat{1}=E$, in the above context, then one has an isomorphism of $\Z \subgroup$-modules 
$
A^{r-k} \rightarrow A^k
$
for each $k \leq \frac{r}{2}$.
\end{cor}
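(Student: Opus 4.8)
The plan is to deduce Corollary~\ref{integral-equivariant-PD-cor} directly from the Poincar\'e duality half of Theorem~\ref{AHK-theorem} together with the observation in Corollary~\ref{cor: mon_basis}(iii) that $A^r \cong \Z$ carries the trivial $G$-action. First I would record why the $G$-action on $A^r$ is trivial: by Corollary~\ref{cor: mon_basis}(iii) the group $A^r$ is free of rank one on $x_E^r$, and since $E$ is the unique maximal flat, every $g \in G$ fixes $E$, hence fixes $x_E^r$; thus the degree map $\deg \colon A^r \overset{\sim}{\longrightarrow} \Z$ is a $\Z G$-module isomorphism with $\Z$ trivial. Consequently the Poincar\'e duality pairing $A^k \times A^{r-k} \to \Z$, $(a,b) \mapsto \deg(a\cdot b)$, is $G$-invariant: for $g \in G$ one has $\deg(g a \cdot g b) = \deg(g(a\cdot b)) = \deg(a \cdot b)$, using that $G$ acts by graded ring automorphisms and trivially on $A^r$.

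Next I would invoke the nondegeneracy asserted in Theorem~\ref{AHK-theorem}: the pairing identifies $A^{r-k}$ with $\Hom_\Z(A^k,\Z)$ via $b \mapsto \varphi_b$, where $\varphi_b(a) = \deg(a\cdot b)$. The $G$-invariance of the pairing shows this identification is $G$-equivariant when $\Hom_\Z(A^k,\Z)$ is given the contragredient (dual) action $g\cdot\varphi = \varphi\circ g^{-1}$; indeed $\varphi_{gb}(a) = \deg(a \cdot gb) = \deg(g^{-1}a \cdot b) = \varphi_b(g^{-1}a) = (g\cdot\varphi_b)(a)$. This yields the $\Z G$-module isomorphism $A^{r-k} \cong \Hom_\Z(A^k,\Z) = (A^k)^{*}$ of \eqref{integral-rep-PD-isomorphism}.

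Finally I would pass from $A^{r-k} \cong (A^k)^{*}$ to $A^{r-k} \cong A^k$. Since $A^k$ is a finitely generated free $\Z$-module (Corollary~\ref{cor: mon_basis}(i)) carrying a $G$-permutation representation on the basis $\FY^k$ (Corollary~\ref{Chow-ring-carries-perm-reps-cor}), its $\Z$-dual is again the permutation $\Z G$-module on the same finite $G$-set $\FY^k$: a permutation module $\Z[X]$ is canonically self-dual via the basis pairing $\langle x, y\rangle = \delta_{x,y}$, which is $G$-invariant because $G$ acts on $X$ by bijections. Hence $(A^k)^{*} \cong A^k$ as $\Z G$-modules, and combining gives $A^{r-k} \cong A^k$, proving the corollary. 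The only mild subtlety — not really an obstacle — is being careful that self-duality of a permutation module requires the module to be permutation (not merely free with a $G$-action), which is exactly what Corollary~\ref{Chow-ring-carries-perm-reps-cor} supplies; over a general ring or for a non-permutation representation the dual need not be isomorphic to the original. I would also remark that Theorem~\ref{main-theorem}(i), proven later, refines this by producing the isomorphism at the level of $G$-sets rather than just $\Z G$-modules.
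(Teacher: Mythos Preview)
Your proof is correct and follows essentially the same approach as the paper: first use Corollary~\ref{cor: mon_basis}(iii) to see that $G$ acts trivially on $A^r$ so that the Poincar\'e duality map $A^{r-k} \to \Hom_\Z(A^k,\Z)$ is $G$-equivariant, and then use the permutation basis $\FY^k$ from Corollary~\ref{Chow-ring-carries-perm-reps-cor} to identify $A^k$ with its $\Z$-dual as $\Z G$-modules. The paper phrases the self-duality step in terms of permutation matrices satisfying $(P(g)^{-1})^T = P(g)$, which is exactly your observation that the basis pairing $\langle x,y\rangle = \delta_{x,y}$ is $G$-invariant.
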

\begin{proof}
Corollary~\ref{Chow-ring-degrees} shows that $A^r$ has only one $\Z$-basis element $x_E^r$, fixed by every $g$ in $\subgroup$, so the degree map $\deg: A^r \longrightarrow \Z$ is $\subgroup$-equivariant for
the trivial $\subgroup$-action on the target $\Z$.  Thus the 
Poincar\'e duality isomorphism $A^{r-k} \longrightarrow \Hom_\Z(A^k,\Z)$, sending $b \longmapsto \varphi_b(-)$ with
$\varphi_b(a)=\deg(a \cdot b)$, is also $\subgroup$-equivariant.  

It only remains to exhibit a $\subgroup$-equivariant isomorphism $\Hom_\Z(A^k,\Z) \rightarrow A^k$.  To this end, use Corollary~\ref{Chow-ring-carries-perm-reps-cor} to pick a $\Z$-basis
$\{e_i\}$ permuted by $\subgroup$, so that each element $g$ in $\subgroup$ acts by a permutation matrix $P(g)$ in this basis.  Letting $\{f_i\}$ be the dual $\Z$-basis for $\Hom_\Z(A^k,\Z)$, one finds that $g$ acts via the matrix $P(g^{-1})^T = (P(g)^{-1})^T=P(g)$, since $P(g)$ is a permutation matrix.  Hence the map $e_i \longmapsto f_i$ is such a
$\subgroup$-equivariant isomorphism $\Hom_\Z(A^k,\Z) \rightarrow A^k$.
\end{proof}

The next consequence will be refined by Theorem~\ref{main-theorem}(ii).

\begin{cor}
\label{AHK-equivariant-Hard-Lefschetz}
In the context of Theorem~\ref{AHK-theorem}, where $\L=\L_\M$ for a simple matroid $\M$, with $\G$ any $\subgroup$-stable building set containing $\hat{1}=E$, one has $\R \subgroup$-module maps
    $A^k_\R \rightarrow A_\R^{k+1}$ which are
    injective for $k < \frac{r}{2}$.
\end{cor}
\begin{proof}
When there exists a
{\it $\subgroup$-fixed} Lefschetz element
$\omega$ in $A^1_\R$ in Theorem~\ref{AHK-theorem}, 
multiplication by $\omega$ gives the $\R \subgroup$-module injections.  For $\G=\G_{\max}$ the maximal building set,
Adiprasito, Huh and Katz \cite{AHK} exhibited an explicit such $\subgroup$-fixed $\omega=\sum_{\varnothing \neq F \in \L_\M} c_F x_F$
on \cite[p. 384]{AHK}, namely
by choosing $c_F=|F| \cdot |E \setminus F|$. 

For more general $\subgroup$-stable building sets, one can use
the fact that \cite[Thm.~4.12]{PagariaPezzoli} exhibits a (nonempty) and $G$-stable {\it convex cone} $K$ of Lefschetz elments $\omega$ inside $A^1(\L_\M,\G)$.  Starting with
any such $\omega$, one can replace it with its $\subgroup$-average\footnote{Thanks to Graham Denham for pointing out this averageing trick.} $\frac{1}{|G|} \sum_{g \in \subgroup} g(\omega)$, which is also
in the cone $K$,
and which is now a $\subgroup$-fixed Lefschetz element.
\end{proof}

\begin{remark}
\label{Denham-remark}
The authors thank G. Denham for sketching the following alternate proof of Theorem~\ref{AHK-theorem} and Corollary~\ref{AHK-equivariant-Hard-Lefschetz}, using results from Ardila, Denham and Huh \cite[\S5]{ADH}.
As in the above proof of Corollary~\ref{AHK-equivariant-Hard-Lefschetz}, for each $G$-stable building set $\G$, the point is to exhibit a nonempty, $G$-stable convex cone $\Sigma$ of Lefschetz elements in degree one $A_\R^1$ of $A_\R(\M,\G)$.  

In \cite[Defn.~5.1]{ADH}, the authors define the cone $\mathcal{K}(\Sigma) \subset A^1_\R \subset A_\R(\Sigma)$ of {\it strictly convex} elements for a rational simplicial fan $\Sigma$ and its Chow ring $A(\Sigma)$.  In
particular, the choice of a $G$-stable building set $\G$ for $\M$ leads to
such a rational simplicial fan $\Sigma_{\M,\G}$, which is $G$-stable,
and $A(\Sigma_{\M,\G})=A(\M,\G)$.
This fan $\Sigma_{\M,\G}$ is a simplicial subdivision of what is called the {\it Bergman fan} of the matroid $\M$.  As part of their proof of \cite[Thm.~6.1]{FeichtnerSturmfels},
Feichtner and Sturmfels show that
this subdivision $\Sigma_{\M,\G}$ of the Bergman fan of $\M$ is a subfan of
the normal fan of a {\it convex polytope}, namely
the {\it Minkowski sum} $\Delta_\G:=\sum_{F \in \G} \Delta_F$ of standard simplices $\Delta_F$ for each $F$ in $\G$. 
In the language of \cite[Rmk.~5.4]{ADH},
this means that $\Sigma_{\M,\G}$ is {\it quasiprojective}, and hence the
cone $\mathcal{K}(\Sigma_{\M,\G})$
is nonempty.  Applying \cite[Thm.~1.6]{ADH}, one concludes
that all elements $\omega$ of $\mathcal{K}(\Sigma_{\M,\G})$ are  Lefschetz elements for $A(\M,\G)$, 
since the same holds for the maximal
building set $\G=\G_{\max}$ by the results of \cite{AHK}.
\end{remark}

\section{Proof of Theorem~\ref{main-theorem}}
\label{main-theorem-section}
Recall the statement, involving a simple matroid $\M$
with lattice of flats $\L=\L_\M$, a choice of a building
set $\G \subset \L \setminus \{\hat{0}\}$ that
contains $\hat{1}=E \in \G$, a subgroup $G \subseteq \Aut(\L)$,
for which $\G$ is setwise $G$-stable, and which satisfies the
{\it stabilizer condition} \eqref{eq:stabilizer-condition}:
\begin{quote}
 for any $\G$-nested set
   $N=\{F_i\}_{i=1,\ldots,\ell}$, if $g \in G$  has 
    $g(N)=N$, then $g(F_i)=F_i$ for $i=1,\ldots,\ell.$
\end{quote}

The statement also uses
the $\Z$-basis of FY-monomials for the Chow ring
$A(\L, \G)$ from Corollary~\ref{cor: mon_basis}:
$$
\fy:=\left\{x_{F_1}^{m_1}  \cdots x_{F_\ell}^{m_\ell} \colon
N:=\{F_1, \cdots,  F_\ell\} \text{ is } \G\text{-nested, and } 
0 \leq m_i < m_N(F_i) \text{ for }i=1,2,\ldots,\ell.\right\}
$$
where $m_N(F):=\rk(F)-\rk(\vee N_{<F})$ was defined in the Introduction (and repeated in \eqref{crucial-quantity}).

\vskip.1in
\noindent
{\bf Theorem~\ref{main-theorem}.}
{\it 
In the above context, there exist
\begin{itemize}
\item[(i)]
$\subgroup$-equivariant  bijections 
$
\pi: \FY^k  \overset{\sim}{\longrightarrow}  \FY^{r-k}
$ for $k \leq \frac{r}{2}$, and
\item[(ii)]
$\subgroup$-equivariant injections
$
\lambda: \FY^k  \hookrightarrow  \FY^{k+1}
$
for $k < \frac{r}{2}$.
\end{itemize}
}
\vskip.1in


\begin{proof}[Proof of Theorem~\ref{main-theorem}]

The proof proceeds in these four steps:

\begin{itemize}
\item[{\sf Step 1.}]
    We first segregate the set of all of all $\FY$-monomials 
into the fibers of an {\it extended support} map
$$
\supp_+: \FY \rightarrow \N(\L,\G)
$$
defined as follows: if $m_i \geq 1$ for $1 \leq i \leq \ell$,
then
$$
\supp_+(x_{F_1}^{m_1} x_{F_2}^{m_2} \cdots x_{F_\ell}^{m_\ell})
:=\{F_1,\ldots,F_\ell\} \cup \{E\}.
$$ 
One of these fibers of $\supp_+$ is depicted in Example~\ref{extended-support-map-example} below.

\item[{\sf Step 2.}]
We check that for each 
$\G$-nested set $N^+=\{F_1,\ldots,F_\ell,E\}$ in the image of $\supp_+$,
the fiber $\supp_+^{-1}(N^+)$
is a collection of monomials which, when ordered by divisibility, forms a {\it Cartesian product of (saturated) chains}.  We also check, via Lemma~\ref{lem:crucial-numerical-fact}, that the degrees of the monomials in the fiber $\supp_+^{-1}(N^+)$ occupy a {\it symmetrically placed} interval $\{\ell,\ell+1,\ldots,(r-\ell)-1,r-\ell\}$
within the range of degrees $\{0,1,\ldots,r-1,r\}$ for the whole Chow ring $A(\L,\G)$.

\item[{\sf Step 3.}]
We check that the stabilizer condition \eqref{eq:stabilizer-condition} allows one to choose the identifications from {\sf Step 2} of
$\supp_+^{-1}(N^+)$ with Cartesian products of
chains {\it $G$-equivariantly} in a certain well-defined sense.

\item[{\sf Step 4.}]
Lastly, we show how the desired bijections $\pi$ and injections $\lambda$ on $\FY$-monomials can be pulled back from the corresponding self-dualities
and order-raising maps on the Cartesian products of chains.  The latter maps are derived from fixed {\it symmetric chain decompositions}, and
are checked to pull back to $G$-equivariant maps on the $\FY$-monomials, using the $G$-equivariance from {\sf Step 3}.

\end{itemize}

We now embark on the proof.
\vskip.1in
\noindent
{\sf Step 1.}\\
As in the outline, segregate the set of all $\FY$-monomials 
into the fibers of the {\it extended support} map
$$
\supp_+: \FY \rightarrow \N(\L,\G)
$$
defined as follows: if $m_i \geq 1$ for $1 \leq i \leq \ell$,
then
$$
\supp_+(x_{F_1}^{m_1} x_{F_2}^{m_2} \cdots x_{F_\ell}^{m_\ell})
:=\{F_1,\ldots,F_\ell\} \cup \{E\}.
$$ 
Note by Definition~\ref{def:nested-sets} of $\G$-nested sets
that, since $N=\{F_1,\ldots,F_\ell\}$ is $\G$-nested,
so is $N \cup \{E\}$. 

\vskip.1in
\noindent
{\sf Step 2.}\\
By the definition of $\fy$ in Corollary~\ref{cor: mon_basis}, a typical $\G$-nested set $N^+=\{F_1,\ldots,F_\ell,E\}$ in the image of $\supp_+$ 
has the following description for its fiber: 
$
\supp_+^{-1}(N^+) =\{
x_{F_1}^{m_1} x_{F_2}^{m_2} \cdots x_{F_\ell}^{m_\ell} x_E^{m_{\ell}+1}
\}
$
where the exponents on the monomials satisfy these inequalities:
\begin{equation}
    \label{fiber-inequalities}
\begin{aligned} 
&1 \leq m_{i} \leq m_{N^+}(F_i)-1 \text{ for }1\leq i \leq \ell,\\
&0 \leq m_{\ell+1} \leq m_{N^+}(E) - 1.
\end{aligned}
\end{equation}
As a consequence, the degrees of monomials
in the fiber $\supp_+^{-1}(N^+)$ lie in the range $[\ell,r-\ell]$: 
\begin{itemize}
    \item the minimum degree is achieved by $\deg(x_{F_1}^1 \cdots x_{F_\ell}^1 x_{E}^0)=\ell$, and
    \item the
maximum degree is achieved by
\begin{align*}
\deg\left(x_{F_1}^{m_{N^+}(F_1)-1} 
           \cdots x_{F_\ell}^{m_{N^+}(F_\ell)-1}
             x_{E}^{m_{N^+}(E)-1} \right)
&= \sum_{F \in N^+} (m_{N^+}(F)-1)\\
&= \rk(\vee N^+) - |N^+| \quad \text{ using Lemma~\ref{lem:crucial-numerical-fact},}\\
&= (r+1)-(\ell+1)=r-\ell. 
\end{align*}
\end{itemize}
Note also that the set of monomials in each such fiber $\supp_+^{-1}(N^+)$, when ordered via divisibility,
gives a poset isomorphic to a Cartesian product of chains
\begin{equation}
    \label{typical-product-of-chains}
\supp_+^{-1}(N^+) \cong C_{m_{N^+}(F_1)-1} \times 
C_{m_{N^+}(F_2)-1} \times \cdots \times
C_{m_{N^+}(F_\ell)-1} \times
C_{m_{N^+}(E)},
\end{equation}
where $C_m$ denotes a chain (totally ordered set) of size $m$.

\vskip.1in
\noindent
{\sf Step 3.}\\
In order to pin down a choice of the poset isomorphism in \eqref{typical-product-of-chains}, for each $\G$-nested set $N^+=\{F_1,\ldots,F_\ell,E\}$ in the image of $\supp_+$,
one must choose a linear order $(F_1,\ldots,F_\ell)$. It will be important
for the proof that these choices of linear order are {\it $G$-equivariant} in the 
following sense: whenever one has two $\G$-nested sets $N$ and $N'$ with chosen
linear orders $(F_1,F_2,\ldots,F_\ell)$ and $(F_1',F_2',\ldots,F_\ell')$, if
$g \in G$ has $g(N)=N'$, then one also has $g(F_i)=F_i'$ for $i=1,2,\ldots,\ell$. 
The fact that such a $G$-equivariant choice is possible follows from the stabilizer condition \eqref{eq:stabilizer-condition}:  one can choose the linear order $(F_1,\ldots,F_\ell)$ arbitrarily
on one $G$-orbit representative $N=\{F_1,\ldots,F_\ell\}$ from each $G$-orbit of $\G$-nested sets,
and then for any other element $g(N)$ within the orbit, decree its linear order to
be $(g(F_1),\ldots,g(F_\ell))$.  Condition \eqref{eq:stabilizer-condition} ensures that this is well-defined, independent of $g$: if $g,h \in G$ have $g(N)=h(N)$, then $g^{-1}h(N)=N$, so $g^{-1}h(F_i)=F_i$ and $g(F_i)=h(F_i)$ for
$i=1,\ldots,\ell$.

\vskip.1in
\noindent
{\sf Step 4.}\\
We now use the fact that
every ranked poset $P$ of rank $R$ which is a product of chains has a {\it symmetric chain decomposition (SCD)},
meaning a disjoint decomposition $P=\bigsqcup_{i=1}^t P_i$
    in which each $P_i$ is a chain passing through ranks $\{\rho_i,\rho_i+1,\ldots,R-\rho_i-1,R-\rho_i\}$ for some $\rho_i \in \{0,1,2,\ldots,\lfloor \frac{R}{2}\rfloor \}$;
    see, e.g., Anderson \cite[\S 3.1]{Anderson}.
Fix one such SCD for each product poset on the right side in \eqref{typical-product-of-chains}, which together with the choice of linear orders $(F_1,\ldots,F_
\ell)$, will induce an SCD on each fiber poset $\supp_+^{-1}\{F_1,\ldots,F_\ell,E\}$.  
An important point for the remainder of the proof
is that the structure of these symmetric chains depend only upon
the the linear orders $(F_1,\ldots,F_\ell)$ and numerical sequences 
$(m_{N^+}(F_1),  m_{N^+}(F_2),
\ldots,
m_{N^+}(F_\ell),
m_{N^+}(E))$, rather than depending upon finer information about the $\G$-nested set $N^+$.

\begin{example} \rm
\label{extended-support-map-example}
Let $\L=\L_\M$ be the lattice flats of a simple matroid $\M$ having $\rk(E)=10=r+1$ with $r=9$, and $\G=\G_{\max}$.  Consider
a $\G$-nested set $N^+=\{F_1,F_2,E\}$ where $F_1 < F_2$ are flats with $\rk(F_1)=3, \rk(F_2)=7$.  Linearly ordering $\{F_1,F_2\}$ as $(F_1,F_2)$, then the
divisibility poset on the fiber $\supp_+^{-1}\{F_1,F_2,E\}$ and one choice of SCD look as follows: 


\begin{center}
\begin{tikzpicture}
  [scale=.25,auto=left]
   \node (110) at (-6,0) {\footnotesize $x_{F_1} x_{F_2}$};
  \node (120) at (-12,6) {\footnotesize $x_{F_1} x_{F_2}^2$};
  \node (210) at (-6,6) {\footnotesize $x_{F_1}^2 x_{F_2}$};
  \node (111) at (0,6) {\footnotesize $x_{F_1} x_{F_2} x_E$};
  \node (130) at (-18,12) {\footnotesize $x_{F_1} x_{F_2}^3$};
  \node (220) at (-12,12) {\footnotesize $x_{F_1}^2 x_{F_2}^2$};
  \node (121) at (-6,12) {\footnotesize $x_{F_1} x_{F_2}^2 x_E$};
  \node (211) at (0,12) {\footnotesize $x_{F_1}^2 x_{F_2} x_E$};
  \node (112) at (6,12) {\footnotesize $x_{F_1} x_{F_2} x_E^2$};
  \node (230) at (-18,18) {\footnotesize $x_{F_1}^2 x_{F_2}^3$};
  \node (131) at (-12,18) {\footnotesize $x_{F_1} x_{F_2}^3 x_E$};
  \node (221) at (-6,18) {\footnotesize $x_{F_1}^2 x_{F_2}^2 x_E$};
  \node (122) at (0,18) {\footnotesize $x_{F_1} x_{F_2}^2 x_E^2$};
  \node (212) at (6,18) {\footnotesize $x_{F_1}^2 x_{F_2} x_E^2$};
  \node (231) at (-12,24) {\footnotesize $x_{F_1}^2 x_{F_2}^3 x_E$};
  \node (132) at (-6,24) {\footnotesize $x_{F_1} x_{F_2}^3 x_E^2$};
  \node (222) at (0,24) {\footnotesize $x_{F_1}^2 x_{F_2}^2 x_E^2$};
   \node (232) at (-6,30) {\footnotesize $x_{F_1}^2 x_{F_2}^3 x_E^2$};
  \foreach \from/\to in {110/120,110/210,110/111,  120/130,120/220,120/121,210/220,210/211,111/211,111/121,111/112,
130/230,130/131,220/230,220/221,121/221,121/131,211/221,211/212,112/122,112/212,121/122,
230/231,131/231,221/231,131/132,221/222,122/132,212/222,122/222,
    231/232,132/232,222/232}
    \draw (\from) -- (\to);
  [scale=.25,auto=left,every node]
   \node (110) at (27,0) {\footnotesize $x_{F_1} x_{F_2}$};
  \node (120) at (21,6) {\footnotesize $x_{F_1} x_{F_2}^2$};
  \node (210) at (27,6) {\footnotesize $x_{F_1}^2 x_{F_2}$};
  \node (111) at (33,6) {\footnotesize $x_{F_1} x_{F_2} x_E$};
  \node (130) at (15,12) {\footnotesize $x_{F_1} x_{F_2}^3$};
  \node (220) at (21,12) {\footnotesize $x_{F_1}^2 x_{F_2}^2$};
  \node (121) at (27,12) {\footnotesize $x_{F_1} x_{F_2}^2 x_E$};
  \node (211) at (33,12) {\footnotesize $x_{F_1}^2 x_{F_2} x_E$};
  \node (112) at (39,12) {\footnotesize $x_{F_1} x_{F_2} x_E^2$};
  \node (230) at (15,18) {\footnotesize $x_{F_1}^2 x_{F_2}^3$};
  \node (131) at (21,18) {\footnotesize $x_{F_1} x_{F_2}^3 x_E$};
  \node (221) at (27,18) {\footnotesize $x_{F_1}^2 x_{F_2}^2 x_E$};
  \node (122) at (33,18) {\footnotesize $x_{F_1} x_{F_2}^2 x_E^2$};
  \node (212) at (39,18) {\footnotesize $x_{F_1}^2 x_{F_2} x_E^2$};
  \node (231) at (21,24) {\footnotesize $x_{F_1}^2 x_{F_2}^3 x_E$};
  \node (132) at (27,24) {\footnotesize $x_{F_1} x_{F_2}^3 x_E^2$};
  \node (222) at (33,24) {\footnotesize $x_{F_1}^2 x_{F_2}^2 x_E^2$};
   \node (232) at (27,30) {\footnotesize $x_{F_1}^2 x_{F_2}^3 x_E^2$};
  \foreach \from/\to in {110/111,
  120/121,210/211,111/112,
130/230,220/221,121/131,211/212,112/122,
131/231,122/132,212/222,
132/232}
    \draw[line width=0.1mm] (\from) -- (\to);
\end{tikzpicture} 
\end{center}
\end{example}

We can now define the maps $\pi, \lambda$ asserted in the theorem.
Let $a$ be any FY-monomial $a$ with $k:=\deg(a)$.  Then $a$ lies in a unique fiber 
$\supp_+^{-1}(N^+)$ for some $\G$-nested set 
$N^+=\{F_1,\ldots,F_\ell,E\}$
of the map $\supp_+$, and on a unique chain $C_i$
in our fixed SCD of this fiber.  Writing 
\begin{equation}
\label{typical-SCD-chain}
C_i=\{ a_{\rho} \lessdot a_{\rho+1} \lessdot \cdots \lessdot a_{r-\rho-1} \lessdot a_{r-\rho} \},
\end{equation}
where $\deg(a_j)=j$ for $j=\rho,\rho+1,\ldots,r-\rho$,
so that $a=a_k$, then define the bijection $\pi$ and maps $\lambda$ via
\begin{align}
\label{pi-def}
\pi(a)&:=a_{r-k},\\
\label{lambda-def}
\lambda(a)&:=a_{k+1} \text{ if }k < \frac{r}{2}.
\end{align}
It remains to show that $\pi,\lambda$ are $\subgroup$-equivariant. Note that having fixed $N^+=\{F_1,\ldots,F_\ell,F_{\ell+1}:=E\}$,
both maps $f=\pi, \lambda$ will
map a monomial of the form $a=\prod_{i=1}^{\ell+1} x_{F_i}^{m_i}$
with $m_i \geq 1$ for $1 \leq i \leq \ell$ as follows:
$$
a=\prod_{i=1}^{\ell+1} x_{F_i}^{m_i}
\quad
\longmapsto 
\quad
f(a)=\prod_{i=1}^{\ell+1} x_{F_i}^{m'_i}
$$
where the exponents $(m'_1,\ldots,m'_\ell,m'_{\ell+1})$
are determined in \eqref{pi-def}, \eqref{lambda-def} uniquely by
our choice of linear ordering $(F_1,\ldots,F_\ell)$ and
by the data 
$$
\left( 
\,\,
(m_1,\ldots,m_{\ell},m_{\ell+1}) 
\,\,
,
\,\,
(m_{N^+}(F_1),  m_{N^+}(F_2),
\ldots,
m_{N^+}(F_\ell),
m_{N^+}(F_{\ell+1}))
\,\,
\right).
$$
Recall from the proof of Corollary~\ref{Chow-ring-carries-perm-reps-cor} that $g$ in $\subgroup$
have $m_{g(N)}(g(F))=m_{N}(F)$.  Also, our $G$-equivariant choice of linear orderings dictates that $g(N^+)=\{g(F_1),\ldots,g(F_\ell),E\}$ has linear order $(g(F_1),\ldots,g(F_\ell))$.  Thus either map $f=\pi, \lambda$ sends
$$
g(a)=\prod_{i=1}^{\ell+1} x_{g(F_i)}^{m_i}
\quad
\longmapsto 
\quad
f(g(a))= \prod_{i=1}^{\ell+1} x_{g(F_i)}^{m'_i}
=g(f(a)). \qedhere
$$
\end{proof}

\begin{remark}\rm \label{rmk:conclusion-failure}
Theorem~\ref{main-theorem} can fail without the
assumption of the stabilizer condition \eqref{eq:stabilizer-condition}. To see that this is true, consider the graphic matroid associated to a complete graph on $12$ vertices, with lattice of flats $\Pi_{12}$ and minimal building set $\G_{\min}$ (see Example~\ref{partition-lattice-example}). For this building set, the stabilizer condition \eqref{eq:stabilizer-condition} does not hold. For example, we have the following nested set 
    \begin{align*}
    N = \{F_1, F_2, F_3\}, \text{ where } F_1 &= 1,2,3,4|5|6|7|8|9|10|11|12, \\
    F_2 &= 5,6,7,8|1|2|3|4|9|10|11|12, \\ 
    \text{ and } F_3 &= 9,10,11,12|1|2|3|4|5|6|7|8.
    \end{align*}
The set $N$ is nested because the join $F_1 \vee F_2 \vee F_3 = 1,2,3,4|5,6,7,8|9,10,11,12$ has multiple non-singleton blocks, and is therefore not a member of $\G_{\min}$. The permutation $g = (1,5,9)(2,6,10)(3,7,11)(4,8,12)$ in $\symm_{12}$, expressed here in cycle notation, fixes $N$ setwise, but not pointwise:
it sends $F_1 \mapsto F_2 \mapsto F_3 \mapsto F_1$.


Let us see how the conclusion of Theorem~\ref{main-theorem} also fails in this case. Here $N^+=\{F_1,F_2,F_3, E\}$ and each of the $F_i$ is of rank $3$ in $\Pi_{12}$, so $m_{N^+}(F_i)=3$. The join $F_1 \vee F_2 \vee F_3$ is of rank $9$, while the maximal element $E$ has rank $11$; thus we have $m_{N^+}(E)=2$. Following the proof of Theorem~\ref{main-theorem}, we have a poset isomorphism $\supp_+^{-1}(N^+) \cong C_2 \times C_2 \times C_2 \times C_2$:

\begin{tikzpicture} [scale=.7,auto=left]
  \node (0) at (0,0) {$x_{F_1} x_{F_2} x_{F_3}$};
  
  \node (f2gh) at (-6,2) {$x_{F_1}^2 x_{F_2} x_{F_3}$};
  \node (fg2h) at (-2,2) {$x_{F_1} x_{F_2}^2x_{F_3}$};
  \node (fgh2) at (2,2) {$x_{F_1} x_{F_2}x_{F_3}^2$};
  \node (fghe) at (6,2) {$x_{F_1} x_{F_2} x_{F_3}x_E$};
  
  \node (f2g2h) at (-10,4) {$x_{F_1}^2 x_{F_2}^2 x_{F_3}$};
  \node (f2gh2) at (-6,4) {$x_{F_1}^2 x_{F_2} x_{F_3}^2$};
  \node (fg2h2) at (-2,4) {$x_{F_1} x_{F_2}^2 x_{F_3}^2$};
  
  \node (f2ghe) at (2,4) {$x_{F_1}^2 x_{F_2} x_{F_3} x_E$};
  \node (fg2he) at (6,4) {$x_{F_1} x_{F_2}^2 x_{F_3} x_E$};
  \node (fgh2e) at (10,4) {$x_{F_1} x_{F_2} x_{F_3}^2 x_E$};

  \node (f2g2h2) at (-6,6) {$x_{F_1}^2 x_{F_2}^2 x_{F_3}^2$};  
  \node (f2g2he) at (-2,6) {$x_{F_1}^2 x_{F_2}^2 x_{F_3} x_E$};
  \node (f2gh2e) at (2,6) {$x_{F_1}^2 x_{F_2} x_{F_3}^2 x_E$};
  \node (fg2h2e) at (6,6) {$x_{F_1} x_{F_2}^2 x_{F_3}^2 x_E$};
  
  \node (1) at (0,8) {$x_{F_1}^2 x_{F_2}^2 x_{F_3}^2 x_E$}; 

  \foreach \from/\to in {0/f2gh, 0/fg2h, 0/fgh2, 0/fghe, 
  					f2gh/f2g2h, f2gh/f2gh2, f2gh/f2ghe, 
					fg2h/fg2h2, fg2h/f2g2h, fg2h/fg2he,
					fgh2/f2gh2, fgh2/fg2h2, fgh2/fgh2e,
					fghe/f2ghe, fghe/fg2he, fghe/fgh2e,
					f2g2h/f2g2h2, f2g2h/f2g2he,
					f2gh2/f2g2h2, f2gh2/f2gh2e,
					fg2h2/f2g2h2, fg2h2/fg2h2e,
					f2ghe/f2g2he, f2ghe/f2gh2e,
					fg2he/f2g2he, fg2he/fg2h2e,
					fgh2e/f2gh2e, fgh2e/fg2h2e,
					f2g2h2/1, f2g2he/1, f2gh2e/1, fg2h2e/1}
 		\draw (\from) -- (\to);  
\end{tikzpicture}

If Theorem~\ref{main-theorem} held in this case, there would be an injective $G$-equivariant map $\lambda: \FY^4 \to \FY^5$. Injective $G$-equivariant maps preserve stabilizer subgroups; in other words, for any $m \in \FY$, one has $g \cdot m = m$ if and only if $g \cdot \lambda(m) = \lambda(m)$. We explain here why in this particular case, that requirement cannot be met.

The monomial $x_{F_1} x_{F_2} x_{F_3}x_E$ has a stabilizer subgroup isomorphic to $\symm_3[\symm_4]$, the wreath product of $\symm_3$ and $\symm_4$. Concretely, this stabilizer subgroup is generated by permutations stabilizing each of $\{1,2,3,4\}$, $\{5,6,7,8\}$, and $\{9,10,11,12\}$ setwise, and also permutations which swap these sets `wholesale.'  One can check that all monomials in $\FY(\Pi_{12})$ which have this stabilizer lie in the above fiber $\supp^{-1}(N^+)$
of the extended support map. Thus when attempting to construct $\lambda$, the image $\lambda(x_{F_1} x_{F_2} x_{F_3}x_E)$ must lie among the degree $5$ monomials in $\supp_+^{-1}(N^+)$. However, none of these degree $5$ monomials have the same stabilizer subgroup $\symm_3[\symm_4]$ as $x_{F_1} x_{F_2} x_{F_3}x_E$. For example, the permutation $g = (1,5,9)(2,6,10)(3,7,11)(4,8,12)$ mentioned above fixes
$x_{F_1} x_{F_2} x_{F_3}x_E$, but none of the degree $5$ monomials.
\end{remark}

\begin{remark} \rm
The $\subgroup$-equivariant bijections 
$
\pi: \FY^k  \overset{\sim}{\longrightarrow}  \FY^{r-k}
$
for $k \leq \frac{r}{2}$
in the above proof have an extra property:  a monomial $a$ always divides its image $\pi(a)$.  If one does not insist on this property, then
one has the following simpler $\subgroup$-equivariant bijection: 
given
$
a=x_{F_1}^{m_1} x_{F_2}^{m_2} \cdots x_{F_\ell}^{m_\ell} x_E^{m_{\ell+1}}
$
in $\FY^k$ 
where $k \leq \frac{r}{2}$
lying in
$
\supp_+^{-1}(N^+)\{F_1,\ldots,F_\ell,E\},
$
where $N^+=
\{F_1,\ldots,F_\ell,E\}
$
so that the $m_i$ satisfy the inequalities \eqref{fiber-inequalities}, then map the FY-monomial $a$ to this 
FY-monomial in the same fiber:
$$
a'=x_{F_1}^{m_{N^+}(F_1)-m_1} x_{F_2}^{m_{N^+}(F_2)-m_2}  \cdots
x_{F_\ell}^{m_{N^+}(F_\ell)-m_\ell} 
x_E^{m_{N^+}(E)-1-m_{\ell+1}}.
$$
The authors thank Connor Simpson for pointing out that the latter bijection is (up to a plus/minus sign) the specialization of a bijection appearing in the work of Pagaria and Pezzoli \cite[Defn.~4.3]{PagariaPezzoli}, where they produce an explicit Poincar\'e duality isomorphism for Chow rings of all {\it polymatroids}.
\end{remark}

In the remainder of this section, we explain
a combinatorial proof of Theorem \ref{main-theorem} that works only for the maximal building set $\G_{\max}$; it can be seen as a way of making concrete choices of the SCDs in the previous proof. Recall that the \textit{maximal} building set is $\G_\text{max} = \L \setminus \{\emptyset\}$. The nested sets in $\G_\text{max}$ are simply chains in $\L \setminus \{\hat{0}\}$ (see Section~\ref{building-set-basics}), and the Feichtner-Yuzvinsky monomials with respect to $\G_\text{max}$ have the form
    \[\fy := \left\{ x_{F_1}^{m_1} \cdot x_{F_2}^{m_2}\cdots x_{F_\ell}^{m_\ell}: \emptyset = F_0 \subsetneq F_1 \subsetneq \ldots \subsetneq F_\ell \text{ is a chain and } m_i < \rk(F_i) - \rk(F_{i-1}) \text{ for all } i \right\}.\]
This proof borrows an idea from the famous {\it parenthesis-pairing}
SCD of Boolean lattices due to Greene and Kleitman \cite{GK}.  We begin with a two-step encoding of FY-monomials within a fiber of the map $\supp_+$. 

\begin{defn} \rm
Consider all FY-monomials $a=x_{F_1}^{m_1} \cdots x_{F_\ell}^{m_\ell} x_E^{m_{\ell+1}}$ having a fixed extended support set
$\supp_+(a)=\{F_1 \subsetneq \cdots \subsetneq F_\ell \subsetneq F_{\ell+1}=E\}$, so $m_1,\ldots,m_\ell \geq 1$ and $m_{\ell+1} \geq 0$.  In the first step, encode such monomials
$a$ via
a sequence $\DD(a)$ of length $r$
in three symbols $\times, \bullet$, and a blank space, defined as follows:
\begin{itemize}
    \item $\DD(a)$ has $\bullet$ in the positions $\{\rk(F_1),\ldots,\rk(F_\ell)\}$.
\item  $\DD(a)$ has $\times$ in the first consecutive $m_i$ positions to the left of $\rk(F_i)$ for each $i=1,2,\ldots,\ell,\ell+1$.
\item $\DD(a)$ has a blank space in the remaining positions.
\end{itemize}

\begin{example} \label{dd-example}
\rm Continuing with the matroid $\M$ and its flats $F_1 < F_2$ as discussed in Example~\ref{extended-support-map-example}.  Here the monomials lie in the fiber
$\supp_+^{-1}\{F_1,F_2,E\}$ where 
$\{\rk(F_1),\rk(F_2),\rk(E)\}=\{3,7,10\}$, so $r=9$, and the positions $\{\rk(F_1),\rk(F_2)\}=\{3,7\}$ are shown in {\color{teal} teal}. The monomial $x_{F_1} x_{F_2}^2$ gets encoded as
\[\begin{array}{ccccccccc}
     1 & 2 & {\color{teal} 3} & 4 & 5 & 6 & {\color{teal} 7} & 8 & 9  \\ \\
      & \times & {\color{teal} \bullet} &  & \times & \times & {\color{teal} \bullet} &  & 
\end{array}\]

\end{example}

\end{defn}

\noindent
Note that one can recover $a$ from 
$
\supp_+(a)=\{F_1,\ldots,F_\ell, E\}
$
and $\DD(a)$, since $m_i$ can be read off as the number of $\times$ in $\DD(a)$ between positions $\rk(F_{i-1})$ and $\rk(F_i)$, with usual conventions
$F_0=\varnothing, F_{\ell+1}:=E$.

\begin{defn} \rm
The second step encodes $\DD(a)$ as a length $r$ parenthesis sequence in $\{(,)\}^r$,  having \begin{itemize}
    \item a right parenthesis ``$)$" in the positions of each $\bullet$ and each blank space, and 
    \item a left parenthesis ``(" in the positions of the $\times$.
\end{itemize}

\end{defn}

\begin{example} \label{dd-to-()-example}
  \rm  Continuing Example~\ref{dd-example}, the diagram $\DD(x_{F_1} x_{F_2}^2)$ is encoded as this sequence of parentheses:
    \[\begin{array}{ccccccccc}
     1 & 2 & {\color{teal} 3} & 4 & 5 & 6 & {\color{teal} 7} & 8 & 9  \\ \\
      & \times & {\color{teal} \bullet} &  & \times & \times & {\color{teal} \bullet} &  & \\
     ) & ( & ) & ) & ( & ( & ) & ) & )
\end{array}\]
\end{example}

\noindent
Note that one can recover $\DD(a)$ from this
 $\{(,)\}^r$ sequence as follows: 
 \begin{itemize}
  \item[-]  $\DD(a)$ has $\times$ occurring in the positions of the left parentheses, and 
     \item[-] $\DD(a)$ has the $\bullet$ occurring
 exactly in the positions of the right parenthesis in a consecutive pair $()$, while blank spaces occur in the position of all other right parentheses.
 \end{itemize}

\begin{proof}[Second proof of Theorem~\ref{main-theorem} for $\G=\G_{\max}$.]

Given an FY-monomial $a$ in $\FY^k$ with $k \leq \frac{r}{2}$, we will use its parenthesis sequence in $\{(,)\}^r$ to place $a$ within a chain of monomials as in \eqref{typical-SCD-chain}, of
the form
$$
C_i=\{ a_{\rho} \lessdot a_{\rho+1} \lessdot \cdots \lessdot a_{r-\rho-1} \lessdot a_{r-\rho} \},
$$
where $\deg(a_j)=j$ for $j=\rho,\rho+1,\ldots,r-\rho$,
so that $a=a_k$.  To this end, define the set of {\it paired parentheses} in $a$ (shown underlined in Example~\ref{example-re-revisited})
by including all consecutive pairs $()$, and after removing these
consecutive pairs, including the new $()$ pairs which have become consecutive, repeating this recursively.  After removing some number $\rho$ of pairs $()$ via this pairing process, the process ends when one reaches a sequence of $r -2 \rho$ remaining unpaired parentheses of this form:
\begin{equation}
\label{unpaired-parentheses}
\underbrace{)) \cdots ))}_{k-\rho}
\underbrace{{\color{orange}(( \cdots ((}}_{r-\rho-k}.
\end{equation}
The monomials in the chain $C_i$ are defined to
be those whose set of paired parentheses agree
exactly with those of $a$, both in their postitions, and left-right pairing structure-- see the underlined parentheses in Example~\ref{example-re-revisited}.  

As in the first proof of Theorem~\ref{main-theorem}, one then defines the bijections $\pi$ and maps $\lambda$ via
$$
\begin{aligned}
\pi(a)&:=a_{r-k},\\
\lambda(a)&:=a_{k+1}  \text{ if }k < \frac{r}{2}.
\end{aligned}
$$
In other words, both maps $\lambda$ and $\pi$
when applied to $a$ will keep all of the paired parentheses fixed, but 
\begin{itemize}
    \item $\lambda$ changes the leftmost unpaired left parenthesis ``)" into an unpaired right parenthesis ``{\color{orange}(}", and
    \item $\pi$ swaps the numbers $k-\rho$ and $r-\rho-k$ of unpaired right and left parentheses in \eqref{unpaired-parentheses}.
\end{itemize}

The equivariance of these two maps $\pi, \lambda$ is argued exactly as in the first proof of the theorem.
\end{proof}

\begin{example}\rm
\label{example-re-revisited}
Here is an example of a symmetric chain from Example~\ref{extended-support-map-example},
explained via this two-step encoding with $\DD(a)$ and
$\{(,)\}^r$-sequences, as we have seen preceding Examples~\ref{dd-example} and ~\ref{dd-to-()-example}. 
Paired parentheses are underlined, and are fixed throughout the chain. Moving up the chain, unpaired right parentheses change one-by-one to left parentheses (depicted {\color{orange} orange} here), in order from right to left:

\small
$$
\begin{array}{ccccccccccl}
 & 1 & 2 & {\color{teal}3} & 4 & 5 & 6 & {\color{teal}7} & 8 & 9 \\
    &  &  &  &  &  &  &  &  &  \\
x_{F_1}^2 x_{F_2}^3 x_E & \times & \times & {\color{teal}\bullet} & \times & \times & \times &{\color{teal}\bullet}  &  & \times\\
& {\color{orange}(} & \underline{(} & \underline{)} & {\color{orange}(} & \underline{(} & \underline{(} & \underline{)} & \underline{)} & {\color{orange}(}\\ 
    &  &  &  &  &  &  &  &  &  \\
 \uparrow &  &  &  &  &  &  &  &  &  \\
    &  &  &  &  &  &  &  &  &  \\
x_{F_1} x_{F_2}^3 x_E &  & \times & {\color{teal}\bullet} & \times & \times & \times &{\color{teal}\bullet}  &  & \times\\
& ) & \underline{(} & \underline{)} & {\color{orange}(} & \underline{(} & \underline{(} & \underline{)} & \underline{)} & {\color{orange}(}\\ 
    &  &  &  &  &  &  &  &  &  \\
 \uparrow &  &  &  &  &  &  &  &  &  \\
    &  &  &  &  &  &  &  &  &  \\
x_{F_1} x_{F_2}^2 x_E &  & \times & {\color{teal}\bullet} &  & \times & \times &{\color{teal}\bullet}  &  & \times\\
& ) & \underline{(} & \underline{)} & ) & \underline{(} & \underline{(} & \underline{)} & \underline{)} & {\color{orange}(}\\ 
   &  &  &  &  &  &  &  &  &  \\
 \uparrow &  &  &  &  &  &  &  &  &  \\
    &  &  &  &  &  &  &  &  &  \\
x_{F_1} x_{F_2}^2 
 &  & \times & {\color{teal}\bullet} &   & \times & \times &{\color{teal}\bullet}  &  &  \\
 & ) & \underline{(} & \underline{)} & ) & \underline{(} & \underline{(} & \underline{)} & \underline{)} & )\\ 
\end{array}
$$
\normalsize

Here $r=9$ and the number of parenthesis pairs is $\rho=3$, so that this chain $C_i$ is symmetrically placed within the degrees of $A(\L_\M, \G_{\max})$, containing monomials of degrees $[\rho,r-\rho]=[3,6]=\{3,4,5,6\}$ 
out of the list of possible degrees $[0,r]=[0,9]=\{0,1,2,\mathbf{3,4,5,6},7,8,9\}$. 
\end{example}

\section{Conjectures on equivariant and Burnside ring inequalities}
\label{conjectures-section}
\label{rep-theoreric-inequalities-section}

Recall from the Introduction that the unimodality statement \eqref{unimodality}, asserting
for $k < \frac{r}{2}$ the inequality
$$
a_k \leq a_{k+1},
$$
is weaker than the statement in Corollary~\ref{AHK-equivariant-Hard-Lefschetz} asserting that there are injective $\R \subgroup$-module maps
$$
A^k_\R \hookrightarrow A_\R^{k+1},
$$
which is weaker than 
Theorem~\ref{main-theorem}(ii) asserting that there are
injective $\subgroup$-equivariant maps of the $\subgroup$-sets
$$
\FY^k \hookrightarrow \FY^{k+1}.
$$
We also introduced there the ring $R_\C(G)$ of virtual complex $G$-characters and the Burnside ring $B(G)$ of virtual $G$-sets,
along with maps $B(G) \rightarrow R_\C(G)$ in
\eqref{Burnside-to-character-map} and a map 
$R_\C(G) \rightarrow \Z$ in \eqref{characters-to-integers-map}. This allowed us to view the inequality $a_k \leq a_{k+1}$ as lifting through the map $R_\C(G) \rightarrow \Z$ to an inequality \eqref{character-unimodality} of the form $
\chi_{A^k_\R} \leq_{R_\C(G)} \chi_{A_\R^{k+1}},
$
in $R_\C(G)$, which then lifts 
through the map $B(G) \rightarrow R_\C(G)$ to an inequality
of the form \eqref{Burnside-unimodality} 
of the form $[\FY^k] \leq_{B(G)} [\FY^{k+1}]$ in $B(G)$.  The goal of this section is to go beyond unimodality for $(a_0,a_1,\ldots,a_r)$, 
considering properties
like {\it log-concavity}, {\it the P\'olya frequency property}, and similarly lifting them to statements in $R_\C(G)$ and $B(G)$.

In the process, we will make frequent use of the following fact:  just as one can multiply inequalities in $\Z$ like $a< b$ and $c < d$ to get new inequalities $ac < bc < bd$, the same works in $R_\C(\subgroup)$
and in $B(\subgroup)$.  This is because
 $\chi, \chi' \geq_{R_\C(\subgroup)} 0$ implies $\chi \cdot \chi' \geq_{R_\C(\subgroup)} 0$, and similarly
$b, b' \geq_{B(\subgroup)} 0$ implies $b \cdot b' \geq_{B(\subgroup)} 0$.

\subsection{PF sequences and log-concavity}
For a sequence of {\it positive} real numbers $(a_0,a_1,\ldots,a_r)$, the property of {\it unimodality} lies at the bottom of a hierarchy of concepts
\begin{equation}
\label{PF-hierarchy}
\begin{array}{rccccccccccccccc}
\text{unimodal}
&\Leftarrow&
PF_2
&\Leftarrow&
&PF_3&
&\Leftarrow&
&PF_4&
&\Leftarrow&
\cdots&
\Leftarrow&
&PF_\infty\\
&  & \Vert & & & &
& & & & & & & &
&\Vert\\
&  & \text{(strongly) log-concave} & & & &
& & & & & & & &
&PF\\
\end{array}
\end{equation}
which we next review, along with their equivariant and Burnside ring extensions.  For background on the non-equivariant versions, see Brenti \cite{Brenti} and Stanley \cite{Stanley-log-concavity}.  For the equivariant versions, see Gedeon, Proudfoot and Young \cite{GPY},
Matherne, Miyata, Proudfoot and Ramos \cite{MMPR}, Gui \cite{Gui2022}, Gui and Xiong \cite{GuiXiong}, and Li \cite{Li2022}.

\begin{defn}\rm
\label{numerical-inequality-conditions}
    Say a sequence of positive reals $(a_0,a_1,\ldots,a_r)$ is {\it unimodal} if there is some index $m$ with 
    $$
    a_0 \leq a_1 \leq \cdots \leq a_{m-1} \leq a_m \geq a_{m+1} \geq \cdots \geq a_{r-1} \geq a_r.
    $$
    
    Say the sequence is {\it strongly\footnote{The word ``strongly" here is superfluous, since we assumed each $a_k >0$, so they are strongly log-concave if and only they are weakly log-concave:  $a_k^2 \geq a_{k-1} a_{k+1}$.  The distinction becomes important for the equivariant analogue; see \cite[\S 2]{MMPR}.} log-concave} (or {\it $PF_2$}) if  $0 \leq i\leq j \leq k \leq \ell\leq r$ and $i+\ell=j+k$ implies
    $$
    a_i a_\ell \leq a_j a_k, \text{ or equivalently,  }
    \det\left[ \begin{matrix} a_j & a_\ell \\ a_i & a_k \end{matrix}\right] \geq 0.
    $$
    
    For $\ell=2,3,4,\ldots$, say that the sequence is $PF_\ell$ if the associated (infinite) {\it Toeplitz matrix} 
    $$
    T(a_0,\ldots,a_r):=
    \left[
    \begin{matrix}
        a_0 & a_1& a_2& \cdots & a_{r-1} & a_r     & 0   & 0 & \cdots  \\
        0   & a_0 & a_1 & \cdots & a_{r-2} & a_{r-1} & a_{r} & 0 &\cdots \\
        0   & 0   & a_0 & \cdots & a_{r-3} & a_{r-2} & a_{r-1} & a_{r}& \cdots \\
        \vdots & \vdots & \vdots & \vdots & \vdots & \vdots & \vdots & \vdots & \ddots 
    \end{matrix}
    \right]
    $$
    has all {\it nonnegative} square minor subdeterminants of size 
    $m \times m$ for $1\leq m  \leq \ell$.
    Say that the sequence is a {\it P\'olya frequency sequence} (or {\it $PF_\infty$}, or just {\it $PF$}) if it is $PF_\ell$ for all $\ell=2,3,\ldots$.
\end{defn}
\noindent
One can check the implication ($PF_2$ implies
unimodality) from \eqref{PF-hierarchy} using the assumption 
$a_k >0$ for all $k$.
It also turns out that $(a_0,a_1,\ldots,a_r)$ is $PF$ if and only if the polynomial $a_0 + a_1 t+a_2 t^2+ \cdots+ a_r t^r$ has only (negative) real roots; see \cite[\S 2.2, 4.5]{Brenti}.

\begin{defn} \rm
For a finite group $\subgroup$ and (genuine, nonzero) $\C \subgroup$-modules $(A^0,A^1,\ldots,A^r)$, define the analogous
notions of {\it equivariant unimodality}, {\it equivariant strong log-concavity}, {\it equivariant $PF_r$ or $PF_\infty$} by replacing the numerical inequalities in 
Definition~\ref{numerical-inequality-conditions} by
inequalities in the representation ring $R_\C(\subgroup)$. 

Similarly, for (nonempty) $\subgroup$-sets
$(X_0,X_1,\ldots,X_r)$, define the notions of  {\it Burnside unimodality}, {\it Burnside strong log-concavity}, {\it Burnside $PF_r$ or $PF_\infty$} by replacing them with  inequalities in the Burnside ring $B(\subgroup)$. 
\end{defn}

\begin{example} \rm
\label{unimodality-of-various-kinds-example}
We've seen the following holds for Chow rings $A(\M, \G)=\bigoplus_{k=0}^r A^k$ of rank $r+1$ matroids $\M$ with $\subgroup=\Aut(\M)$, and choice of $\subgroup$-stable building set $\G$ containing $E$ that satisfies \eqref{eq:stabilizer-condition}:
\begin{itemize}
    \item 
the sequence $(a_0,a_1,\ldots,a_r)$ with $a_k:=\rk_\Z A_k$ is {\it unimodal}, 
\item after tensoring with $\C$, the sequence of $\C \subgroup$-modules $(A^0_\C,A^1_\C,\ldots,A^r_\C)$ is {\it equivariantly unimodal}, and
\item the sequence of $\subgroup$-sets $(\FY^0,\FY^1,\ldots,\FY^r)$
is {\it Burnside unimodal}.
\end{itemize}
\end{example}

 We give here several conjectures for the Chow ring $A(\L_\M,\G_{\max})$ of a simple matroid $\M$ with the choice of the maximal building set $\G_{\max}$.

\begin{conj}
\label{log-concavity-conjectures}
For any simple matroid $\M$ with lattice of flats $\L=\L_\M$ of rank $r+1$ and $\subgroup:=\Aut(\M)$, the Chow ring $A(\L,\G_{\max})=\oplus_{k=}^r A_k$
and the sequence $(a_0,a_1,\ldots,a_r)$ with $a_k:=\mathrm{rank}_\Z A_k$
satisfy the following:
    \begin{itemize}
    \item[(i)] (Ferroni-Schr\"oter \cite[Conj. 10.19]{FerroniSchroter}) $(a_0,\ldots,a_r)$ is $PF_\infty$.
\item[(ii)] $(A^0_\C,\ldots,A^r_\C)$ is equivariantly $PF_\infty$.
\item[(iii)] $(\FY^0,\ldots,\FY^r)$
is Burnside $PF_2$ (Burnside log-concave), that is,
\[
[\FY^i][\FY^\ell] \leq_{B(\subgroup)}
[\FY^j][\FY^k]
\qquad \text{for } 
 \; i \leq j \leq k \leq \ell \; \text{ with } \; i+\ell=j+k.\]
\end{itemize}
\end{conj}
Of course, in Conjecture~\ref{log-concavity-conjectures}, 
assertion (ii) implies assertion (i).
However assertion (iii) would only imply the weaker $PF_2$ part of 
the conjectural assertion (ii), and only imply the $PF_2$ part of Ferroni and Schr\"oter's assertion (i), but not their $PF_\infty$ assertions. Even the $PF_2$ property for $(a_0,\ldots,a_r)$ is still conjectural; see \cite[\S10.3]{FerroniSchroter} and \cite[\S 3.7]{FMSV} for a discussion 
of the current evidence for Conjecture~\ref{log-concavity-conjectures}(i).

\begin{example}
\label{Burnside-PF3-counterexample}
 \rm \ 
We explain here why Conjecture~\ref{log-concavity-conjectures} {\it does not} assert that $(\FY^0,\ldots,\FY^r)$ is Burnside $PF_\infty$.  In fact, $(\FY^0,\ldots,\FY^r)$ {\it fails even to be Burnside $PF_3$}, already when $\M$ is a rank $4$ Boolean matroid. Its Chow ring $A(\L_\M,\G_{\max})=A^0 \oplus A^1 \oplus A^2 \oplus A^3$ has $A_0,A_3$ carrying the trivial $\C \symm_4$-module, and $A^1,A^2$ carrying isomorphic permutation representations, each having three orbits, whose three $\symm_4$-stabilizer groups are the Young subgroups $\symm_4, \symm_3 \times \symm_1, \symm_2 \times \symm_2$.  
The red $3 \times 3$ minor 
of the Toeplitz matrix shown here
$$
\left[
\begin{matrix}
    a_0 & {\color{red}a_1} & {\color{red}a_2} & a_3 & {\color{red} 0} & 0 &\cdots \\
       0  & {\color{red}a_0} & {\color{red}a_1}& a_2 & {\color{red} a_3} & 0 & \cdots \\
       0  &  {\color{red} 0}   & {\color{red} a_0} & a_1 & {\color{red}a_2} & a_3 &\cdots \\
        0  &  0 & 0   &  a_0 & a_1 & a_2  &\cdots \\
  \vdots  &  \vdots   & \vdots & \vdots & \vdots & \vdots & \ddots \\
\end{matrix}
\right]
$$
has determinant 
$
{\color{red}a_1^2 a_2-a_1 a_3 - a_2^2}.
$
Hence the Burnside $PF_3$ condition would require that the following genuine $\symm_4$-character should come from a genuine permutation representation
$$
{\color{red}\left( \chi_{A^1}\right)^2 \cdot \chi_{A^2} - 
\chi_{A^1} \chi_{A^3} - \left(\chi_{A^2}\right)^2
}= 29 \chi^{(1, 1, 1, 1)} + 124 \chi^{(2, 1, 1)} + 
103 \chi^{(2, 2)} + 172 \chi^{(3, 1)} + 76\chi^{(4)},
$$
where here $\chi^\lambda$ denotes the irreducible $\symm_n$-representation \cite{Sagan}, \cite[\S 7.18]{Stanley-EC2} indexed by the partition $\lambda$ of $n$; this expansion was computed using {\tt Sage/Cocalc}.  But one can check that this is {\it not} a permutation representation, as its character value on the conjugacy class of $4$-cycles in $\symm_4$ is $76-172+124-29=-1<0$.
\end{example}

\begin{remark} \rm \
\label{PF-needs-max-building-set}
There would exist small counterexamples if one allowed $\G \neq \G_{\max}$ in Conjecture~\ref{log-concavity-conjectures}(i),(ii).  For example, 
Maestroni and McCullough point out in
\cite[Ex.~6.3]{MaestroniMcCullough} that the uniform matroid $\M=U_{3,4}$ of rank $3$ on $4$ elements has
$A(\L_\M,\G_{\min}) \cong \Z[x_E]/(x_E^3)$ with ranks $(a_0,a_1,a_2)=(1,1,1)$, a sequence which is not even $PF_3$.  On the other hand,
we have not extensively investigated Conjecture~\ref{log-concavity-conjectures}(iii) whether might hold when generalized to all $\subgroup$-stable
building sets $\G$ containing $E$.
\end{remark}

\begin{remark} \rm  \ 
Although Example~\ref{Burnside-PF3-counterexample} shows that even Boolean lattices/matroids contradict strengthening
Conjecture~\ref{log-concavity-conjectures}(iii) to Burnside $PF_3$, they seem to satisfy a {\it different} strengthening of strong log-concavity:

\begin{conj}
\label{boolean-h-log-concavity-conj}
Consider the Boolean lattice $\L$ of rank $n$ with $\G=\G_{\max}$, and $\subgroup=\symm_n$ the symmetric group.  Then for any $i \leq j \leq k \leq \ell$ with  $i+\ell=j+k$, not only is the element
$$
[\FY^j][\FY^k]-[\FY^i][\FY^\ell] \geq_{B(\symm_n)} 0,
$$
but in fact a permutation representation with 
orbit-stabilizers all Young subgroups
$\symm_\lambda:=\symm_{\lambda_1} \times \symm_{\lambda_2} \times \cdots \times \symm_{\lambda_\ell}$.
\end{conj}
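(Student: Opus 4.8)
The plan is to first observe that the ``furthermore'' clause of Conjecture~\ref{boolean-h-log-concavity-conj} is automatic, so that the conjecture is equivalent to the Burnside log-concavity assertion of Conjecture~\ref{log-concavity-conjectures}(iii) in the Boolean case. For the Boolean matroid $\M$ of rank $n$ (ground set $E$, $|E|=n$, $r=n-1$), the stabilizer in $\symm_n$ of an FY-monomial $x_{F_1}^{m_1}\cdots x_{F_\ell}^{m_\ell}$ is exactly the Young subgroup $\symm_{F_1}\times\symm_{F_2\setminus F_1}\times\cdots\times\symm_{F_\ell\setminus F_{\ell-1}}\times\symm_{E\setminus F_\ell}$, since a rank-preserving permutation of $E$ that permutes the strictly increasing chain $\{F_i\}$ must fix each $F_i$ setwise. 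As the intersection of two Young subgroups is the Young subgroup of the common refinement of the corresponding set partitions of $E$, every $\symm_n$-subset of a product $\FY^j\times\FY^k$ has all orbit-stabilizers Young. Now isomorphism classes of finite $\symm_n$-sets inject into $B(\symm_n)$, so $[\FY^j][\FY^k]-[\FY^i][\FY^\ell]\ge_{B(\symm_n)}0$ holds if and only if there is a $\symm_n$-equivariant injection $\FY^i\times\FY^\ell\hookrightarrow\FY^j\times\FY^k$; the complement of such an injection is then automatically a genuine $\symm_n$-set all of whose stabilizers are Young. So the whole conjecture follows once these injections are produced.

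Second, I would telescope. Given $i\le j\le k\le\ell$ with $i+\ell=j+k$ (hence $j-i=\ell-k$), it suffices to build $\symm_n$-equivariant injections $\FY^a\times\FY^b\hookrightarrow\FY^{a+1}\times\FY^{b-1}$ for all $0\le a<b\le r$ and compose $j-i$ of them. Moreover, conjugating both sides by the bijections $\pi$ of Theorem~\ref{main-theorem}(i) turns $\FY^a\times\FY^b\hookrightarrow\FY^{a+1}\times\FY^{b-1}$ into $\FY^{r-b}\times\FY^{r-a}\hookrightarrow\FY^{r-b+1}\times\FY^{r-a-1}$, so one may assume $b\le\frac r2$. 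When $a<\frac r2<b$ the injection is easy: take the product of $\lambda\colon\FY^a\hookrightarrow\FY^{a+1}$ from Theorem~\ref{main-theorem}(ii) with the downward injection $\FY^b\xrightarrow{\pi^{-1}}\FY^{r-b}\xrightarrow{\lambda}\FY^{r-b+1}\xrightarrow{\pi}\FY^{b-1}$ obtained by transporting $\lambda$ through the relevant $\pi$'s; both factors are $\symm_n$-equivariant, and a product of injections is injective. This already recovers a large family of the required inequalities.

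\textbf{The crux}, and what I expect to be the main obstacle, is the remaining adjacent case $a<b\le\frac r2$, where there is no injection $\FY^b\hookrightarrow\FY^{b-1}$ to lean on and $\symm_n$ acts on both coordinates at once, so neither factor can be handled separately. Here I would try to adapt the parenthesis / symmetric-chain encoding of the second proof of Theorem~\ref{main-theorem}: a pair of FY-monomials with fixed extended supports becomes a pair of length-$r$ parenthesis words carrying rank data, and one wants a local rule transferring one unit of degree from the second word to the first, defined purely in terms of the two rank-sets $\{\rk(F_i)\}$ and $\{\rk(F'_i)\}$ — which would guarantee equivariance by exactly the argument used in both proofs of Theorem~\ref{main-theorem}. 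The content of this case is an equivariant (indeed Burnside-ring) refinement of the log-concavity of the Hilbert sequence $(a_0,\dots,a_r)$, which for the Boolean matroid of rank $n$ is the Eulerian sequence $a_k=A(n,k)$; the generating function is the real-rooted Eulerian polynomial, so the numerical log-concavity is classical, but that argument is analytic and yields no equivariant injection, and a symmetric chain decomposition of the fibers of $\supp_+$ alone does not suffice (the rank sizes of a poset admitting an SCD need not be log-concave). I would instead turn to the known combinatorial models for the $\symm_n$-modules $A^k(B_n)$ — the permutohedral toric variety's cohomology, described via descents of permutations, equivalently via ribbon Young subgroups — and attempt to construct a ``two-line'' non-crossing injective pairing there that respects descents, hence stabilizers; building such a pairing (or showing it cannot exist) is where the real difficulty lies, and this appears to be open.
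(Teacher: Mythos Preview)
This statement is Conjecture~\ref{boolean-h-log-concavity-conj} in the paper, and the paper offers \emph{no proof}: it is stated explicitly as a conjecture, immediately after Example~\ref{Burnside-PF3-counterexample}. There is therefore no paper proof to compare against, and your proposal is not a proof either---as you yourself acknowledge in the final paragraph.

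That said, your opening observation is correct and goes slightly beyond what the paper states: the ``furthermore'' clause is automatic once the Burnside inequality holds. Since the stabilizer in $\symm_n$ of any FY-monomial for the Boolean matroid is a Young subgroup, and intersections of Young subgroups are Young, every orbit in $\FY^j\times\FY^k$ and in $\FY^i\times\FY^\ell$ has Young stabilizer. The classes $[\symm_n/\symm_\lambda]$ for distinct partitions $\lambda$ are distinct $\Z$-basis elements of $B(\symm_n)$, so the difference $[\FY^j][\FY^k]-[\FY^i][\FY^\ell]$ is a $\Z$-combination of these alone; nonnegativity in $B(\symm_n)$ then forces each coefficient to be nonnegative. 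Thus Conjecture~\ref{boolean-h-log-concavity-conj} is equivalent to the Boolean case of Conjecture~\ref{log-concavity-conjectures}(iii), which the paper also leaves open.

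Your telescoping reduction and the treatment of the straddling case $a<\tfrac r2<b$ via the maps $\lambda$ and $\pi$ of Theorem~\ref{main-theorem} are essentially sound, though the sentence ``one may assume $b\le\tfrac r2$'' is misstated: the $\pi$-conjugation symmetry sends $(a,b)$ to $(r-b,r-a)$, which lets you assume $a+b\le r$, not $b\le\tfrac r2$, so the straddling case must still be handled separately (and the boundary case where one of $a,b$ equals $\tfrac r2$ needs a word more care). The remaining case $a<b\le\tfrac r2$ you correctly identify as the crux, and correctly identify as open; this matches the paper's own position.
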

\end{remark}

We note here a small amount of evidence for Conjecture~\ref{log-concavity-conjectures}(ii), (iii), namely their strong log-concavity assertions
hold for the case
$i=0$. For assertion (ii), this is an easy consequence
of the fact that the Chow ring $A(\L,\G_{\max})$ is generated by the variables $\{ y_F \}$ spanning its degree one component $A^1$, which shows that this $\subgroup$-equivariant multiplication map surjects:
\begin{equation}
\label{multiplication-surjects}
A^j \otimes A^k  \twoheadrightarrow  A^{j+k} \left(\cong A^0 \otimes A^{j+k}\right).
\end{equation}
We next check that the
stronger assertion of Conjecture~\ref{log-concavity-conjectures}(iii) also holds in the special case $i=0$.

\begin{prop}
\label{thm:2by2kos}
Any simple matroid has $\subgroup$-equivariant injection 
$
\FY^{j+k} \hookrightarrow \FY^j \times \FY^k
$
for $j,k \geq 0$.
\end{prop}
\begin{proof}
Given $a=x_{F_1}^{m_1} \cdots x_{F_\ell}^{m_\ell}$ in $\FY^{j+k}$, so that $j+k=\sum_{i=1}^\ell m_i$, let $p$ be the smallest 
index such that
\begin{equation}
\label{index-definining-inqualities}
\sum_{i=1}^{p-1} m_i < j \leq \sum_{i=1}^{p} m_i
\end{equation}
and factor the monomial $a=b \cdot c$ where
$$
a= \underbrace{x_{F_1}^{m_1} \cdots x_{F_{p-1}}^{m_{p-1}} x_{F_p}^\delta}_{b}
\quad \cdot \quad 
\underbrace{ x_{F_p}^{m_p-\delta} x_{F_{p+1}}^{m_{p+1}}  \cdots x_{F_\ell}^{m_\ell}}_{c}
$$
with 
$\delta:=j-\sum_{i=1}^{p-1}m_i$ 
(so $\delta > 0$ by \eqref{index-definining-inqualities}), and
$m_p-\delta \geq 0$.  One can check that, since $a$ lies in $\FY^{j+k}$,
one will also have $b, c$ lying in $\FY^j, \FY^k$, respectively.
It is easily seen that the map $a \longmapsto (b,c)$ is injective,
since its inverse sends $(b,c) \longmapsto bc$. It is also not
hard to check that it is $\subgroup$-equivariant.\qedhere
\end{proof}

\begin{remark} \rm
Note that one can iterate the map in the previous proof to construct $\subgroup$-equivariant 
injections 
$
\prod_{i=1}^q \FY^{\alpha_i} \into \prod_{j=1}^p \FY^{\beta_j}
$
whenever $\beta = (\beta_1, \beta_2, \ldots, \beta_p)$ is a composition refining $\alpha = (\alpha_1, \alpha_2, \ldots, \alpha_q).$ 
\end{remark}

As another small piece of evidence for Conjecture \ref{log-concavity-conjectures} (ii), (iii), we show that $(\FY^0, \ldots, \FY^r)$ is Burnside $PF_2$ for $r \leq 5$, that is, for matroids of rank at most $6$.

\begin{prop} \label{thm:small-burnside-pf2}
For simple matroids $\M$ with $\rk(\M) \leq 6$, the sequence $(\FY^0, \ldots, \FY^r)$ is Burnside $PF_2$.
\end{prop}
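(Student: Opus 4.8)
The plan is to obtain every instance of the desired inequality
\[
[\FY^i][\FY^\ell]\;\leq_{B(G)}\;[\FY^j][\FY^k],
\qquad 0\le i\le j\le k\le\ell\le r,\quad i+\ell=j+k,
\]
purely formally from three facts already established, together with the observation (noted earlier in this section) that $\geq_{B(G)}0$ is closed under multiplication, so that $a\leq_{B(G)}b$ and $c\leq_{B(G)}d$ with $a,c\geq_{B(G)}0$ force $ac\leq_{B(G)}bd$. The three facts are: the Poincar\'e duality bijections of Theorem~\ref{main-theorem}(i), giving $[\FY^m]=[\FY^{r-m}]$ in $B(G)$; the Lefschetz injections of Theorem~\ref{main-theorem}(ii), which compose to give $[\FY^0]\leq_{B(G)}[\FY^1]\leq_{B(G)}\cdots\leq_{B(G)}[\FY^{\lfloor r/2\rfloor}]$; and the case $i=0$ of Burnside log-concavity, namely $\FY^{j+k}\hookrightarrow\FY^j\times\FY^k$, which is Proposition~\ref{thm:2by2kos}.

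First I would record the trivial reductions. If $i=j$ then $\ell=k$ and the two sides coincide. If $i=0$ the inequality is exactly Proposition~\ref{thm:2by2kos}. If $\ell=r$, apply Theorem~\ref{main-theorem}(i) to each factor to rewrite the inequality as the one for $(r-\ell,r-k,r-j,r-i)$, whose first entry is $0$; this is then the previous case. After these reductions the only quadruples left (for $r\le 5$) satisfy $1\le i<j\le k<\ell\le r-1$, and a short enumeration shows these are exactly $(1,2,2,3)$ (for $r\in\{4,5\}$) together with $(1,2,3,4)$ and $(2,3,3,4)$ (for $r=5$); for $r\le 3$ nothing remains. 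In particular the case $i=0$, hence Proposition~\ref{thm:2by2kos}, is genuinely used: e.g.\ $(0,1,1,2)$ needs $\FY^2\hookrightarrow\FY^1\times\FY^1$, which does not follow from Theorem~\ref{main-theorem} alone.

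It then remains to handle these three quadruples directly. For $(1,2,2,3)$, Theorem~\ref{main-theorem}(i) gives $[\FY^3]=[\FY^1]$ when $r=4$ and $[\FY^3]=[\FY^2]$ when $r=5$, so $[\FY^1][\FY^3]$ equals $[\FY^1]^2$ or $[\FY^1][\FY^2]$ respectively; multiplying $[\FY^1]\leq_{B(G)}[\FY^2]$ (valid since $1<r/2$) by $[\FY^1]$ or by $[\FY^2]$ yields $[\FY^1][\FY^3]\leq_{B(G)}[\FY^2]^2=[\FY^2][\FY^2]$ in both cases. For $(1,2,3,4)$ and $(2,3,3,4)$ (both with $r=5$), Theorem~\ref{main-theorem}(i) gives $[\FY^4]=[\FY^1]$ and $[\FY^3]=[\FY^2]$, reducing both inequalities to $[\FY^1]^2\leq_{B(G)}[\FY^2]^2$, which is again $[\FY^1]\leq_{B(G)}[\FY^2]$ squared.

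The real content is thus the bookkeeping itself, and the hypothesis $\rk(\M)\le 6$ is precisely what makes it close up: for larger rank one meets configurations such as $(5,8,8,11)$ with $r=20$, for which neither Poincar\'e duality, nor the monotone chain $[\FY^0]\le\cdots\le[\FY^{\lfloor r/2\rfloor}]$, nor Proposition~\ref{thm:2by2kos}, nor any product of these, evidently suffices -- handling such configurations is exactly the open part of Conjecture~\ref{log-concavity-conjectures}(iii). So I do not expect any single step here to be an obstacle; the only thing to verify carefully is that the small-rank cases are exhausted by the tools in hand, with no new construction required.
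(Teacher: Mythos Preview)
Your proof is correct and follows essentially the same strategy as the paper's: both reduce via Poincar\'e duality (Theorem~\ref{main-theorem}(i)), handle the $i=0$ case with Proposition~\ref{thm:2by2kos}, and finish the handful of remaining cases using the Burnside inequality $[\FY^1]\leq_{B(G)}[\FY^2]$ from Theorem~\ref{main-theorem}(ii) together with closure of $\geq_{B(G)}$ under products. The paper packages the last step as a single chain $1\leq[\FY^1]\leq[\FY^2]\leq[\FY^1]^2\leq[\FY^1][\FY^2]\leq[\FY^2]^2$, whereas you enumerate the surviving quadruples directly; the content is the same. One small slip: for $(2,3,3,4)$ with $r=5$ the reduction gives $[\FY^2][\FY^4]=[\FY^1][\FY^2]$ on the left, so the needed inequality is $[\FY^1][\FY^2]\leq_{B(G)}[\FY^2]^2$ rather than $[\FY^1]^2\leq_{B(G)}[\FY^2]^2$ --- but this too follows immediately from $[\FY^1]\leq_{B(G)}[\FY^2]$.
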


\begin{proof}[Proof sketch.]
We check it for $\rk(\M)=6$, and $\rk(\M) \leq 5$ is similar. Theorem~\ref{main-theorem}(i) shows  
that in $B(\subgroup)$,
\[
 \Big([\FY^0], \, [\FY^1], \, [\FY^2], \, [\FY^3], \,[\FY^4], \, [\FY^5] \Big)
=
\Big(1, \, [\FY^1], \,[\FY^2], \, [\FY^2], \,[\FY^1], \, 1\Big).
\]

Hence one must check nonnegativity in $B(\subgroup)$
for all 2 x 2 minors in this infinite Toeplitz matrix:
\[
\begin{bmatrix}
    1 & [\FY^1] & [\FY^2] & [\FY^2] & [\FY^1] & 1 & 0 & 0 & \ldots \\
    0 & 1 & [\FY^1] & [\FY^2] & [\FY^2] & [\FY^1] & 1 & 0 & \ldots \\
    0 & 0 & 1 & [\FY^1] & [\FY^2] & [\FY^2] & [\FY^1] & 1 & \ldots \\
    0 & 0 & 0 & 1 & [\FY^1] & [\FY^2] & [\FY^2] & [\FY^1] & \ldots \\
    0 & 0 & 0 & 0 & 1& [\FY^1] & [\FY^2] & [\FY^2] & \ldots \\
    \vdots & \vdots & \vdots & \vdots & \vdots & \vdots & \vdots & \vdots & \ddots \\
\end{bmatrix}
\]
From periodicity of the matrix, one may assume without loss of generality that the $2 \times 2$ minor has its top-left entry in the first row.  If the minor has a $0$ as either its lower-left or upper-right entry, then its determinant is a product $[\FY^i] [\FY^j] = [\FY^i \times \FY^j] \geq_{B(\subgroup)} 0$. This already leaves only finitely many $2 \times 2$ minors to check.  Additionally, if it has $1$ as its lower left entry, then it was shown to be Burnside-nonnegative
in Theorem~\ref{thm:2by2kos}.  All of the remaining $2 \times 2$ minors we claim 
are Burnside-nonnegative because they compare two (possibly non-consecutive) terms in this chain of inequalities:
$$
1
\overset{(a)}{\leq}_{B(\subgroup)}[\FY^1] 
\overset{(b)}{\leq}_{B(\subgroup)} [\FY^2] 
\overset{(c)}{\leq}_{B(\subgroup)} [\FY^1][\FY^1] 
\overset{(d)}{\leq}_{B(\subgroup)} [\FY^1][\FY^2] 
\overset{(e)}{\leq}_{B(\subgroup)} [\FY^2][\FY^2]
$$
where inequalities (a),(b) follow from Theorem~\ref{main-theorem}(ii), inequality (c) follows from Theorem~\ref{thm:2by2kos}, and inequality (d),(e) come from multiplying inequality (b) by $[\FY^1]$ and multiplying inequality (a) by $[\FY^2]$.
\end{proof}

\begin{remark} \rm
When $\rk(\M) \geq 7$, one encounters the first $2 \times 2$ minor in $B(\subgroup)$ for $\subgroup=\Aut(\M)$
$$
\det 
\left[
\begin{matrix}
    [\FY^2] & [\FY^3] \\
    [\FY^1] & [\FY^2]
\end{matrix}
\right]
=
[\FY^2] [\FY^2] - [\FY^1] [\FY^3]
=[ \FY^2 \times \FY^2 ] - [\FY^1 \times \FY^3]
$$
whose Burnside nonnegativity does not already follow from our previous results.
\end{remark}

\subsection{Koszulity}

The surjection in \eqref{multiplication-surjects} that proved a special case of Conjecture~\ref{log-concavity-conjectures}(ii) turns out to be the $2 \times 2$ special case of more general equivariant $\ell \times \ell$ Toeplitz minor inequalities for Chow rings $A(\L_\M,\G_{\max})$.
These inequalities follow from general theory of
{\it Koszul algebras}, along with a recent result of Maestroni and McCullough \cite{MaestroniMcCullough} showing $A(\L_\M,\G_{\max})$ is Koszul.  After reviewing these
results, we state a conjecture generalizing Proposition~\ref{thm:2by2kos} and upgrade these Toeplitz minor inequalities from the representation ring $R_\C(\subgroup)$ to the Burnside ring $B(\subgroup)$.  As a reference for Koszul algebras, see
Polishchuk and Positselski \cite{PP}.

\begin{remark}
    The choice of the maximal building set $\G_{\max}$ for Koszulity is important. Maestroni and McCullough  \cite[Ex.~6.2, 6.3]{MaestroniMcCullough} exhibit small matroids $\M$ with $A(\L_\M,\G)$
    {\it not} Koszul for certain choices of non-maximal
    building sets $\G$, including the example of 
    $A(\L_\M,\G_{\min})$ for $\M=U_{3,4}$ mentioned in
    Remark~\ref{PF-needs-max-building-set}.
\end{remark}

\begin{defn} \rm
Let $\kk$ be a field, and $A$ a {\it finitely generated standard graded associative $\kk$-algebra}.  This means $A$ is a quotient $A=T/I$ where $T=\kk\langle x_1,\ldots,x_n\rangle$ is the free associative algebra on $n$ noncommuting variables $x_1,\ldots,x_n$, considered to all have
$\deg(x_i)=1$, and $I$ is a homogeneous two-sided ideal in $T$.

Writing\footnote{Apologies to the reader that we are writing subscripted $A_k$ here, not superscripted $A^k$ as we did for the Chow rings.} $A=\bigoplus_{k=0}^\infty A_k$, let 
$A_+:=\bigoplus_{k=1}^\infty A_k$ be the maximal graded two-sided ideal of $A$.  Regard the field $\kk$ as an $A$-module via the quotient surjection $A \twoheadrightarrow A/A_+ \cong \kk$.  In other words,
each $x_i$ acts as $0$ on $\kk$.  

Say that $A$ is a {\it Koszul algebra} if the above
surjection $A \twoheadrightarrow \kk$ extends to a {\it linear} graded free $A$-resolution of $\kk$ as an $A$-module, meaning that the $i^{th}$ resolvent $F_i = A(-i)^{\beta_i}$ for some $\beta_i \geq 0$:
$$
0 \leftarrow \kk 
\leftarrow A
\leftarrow A(-1)^{\beta_1} 
\leftarrow A(-2)^{\beta_2} 
\leftarrow A(-3)^{\beta_3} 
\leftarrow \cdots
$$
\end{defn}
There are several equivalent ways to say when $A$ is Koszul, such
as requiring that the polynomial grading of $\Tor^A_i(\kk,\kk)$ is concentrated in degree $i$.  Equivalently, this means that if one starts with the {\it bar complex} $\mathcal{B}_A$ as an $A$-resolution of $\kk$, and then tensors over $A$ with $\kk$, one obtains a complex $\kk \otimes_A \mathcal{B}_A$ of graded $\kk$-vector spaces whose $i^{th}$ homology is concentrated in degree $i$.  The latter characterization leads
to the following result of Polishchuk and Positselski.

\begin{thm} \cite[Chap. 2, Prop. 8.3]{PP}
For any Koszul algebra $A$, and any composition $(\alpha_1,\ldots,\alpha_\ell)$ of $m=\sum_i \alpha_i$, there exists a subcomplex $(C_*,d)$ of $\kk \otimes_A \mathcal{B}_A$ of the form
$
0\rightarrow C_\ell \rightarrow C_{\ell-1} \rightarrow \cdots \rightarrow 
C_1 \rightarrow 0
$
starting with 
$C_\ell=A_{\alpha_1} \otimes \cdots \otimes A_{\alpha_\ell}$ at left, ending with
$C_1=A_{\alpha_1+\cdots+\alpha_\ell}=A_m$ at right,
and with $i^{th}$ term 
$$
C_i=\bigoplus_\beta 
A_{\beta_1} \otimes \cdots \otimes A_{\beta_i}
$$
where $\beta$ in the direct sum runs over all compositions
with $i$ parts that coarsen $\alpha$.
This complex $(C_*,d)$ is exact except at the left end $C_\ell$, meaning that this complex is exact:
\begin{equation}
\label{PP-exact=sequence}
0 \rightarrow \ker(d_\ell) \rightarrow C_\ell \rightarrow C_{\ell-1} \rightarrow \cdots \rightarrow 
C_1 \rightarrow 0
\end{equation}
The complex  $(C_*,d)$ is also
$\subgroup$-equivariant for any group $\subgroup$ of graded $\kk$-algebra automorphisms of $A$.
\end{thm}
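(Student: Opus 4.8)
The plan is to realize $\kk \otimes_A \mathcal{B}_A$ concretely as the normalized bar complex $B_*$, with $n$-th term $B_n = A_+^{\otimes n}$ in homological degree $n$ (and $B_0 = \kk$) and differential $d_n = \sum_{j=1}^{n-1}(-1)^j \mu_j$, where $\mu_j$ multiplies the $j$-th and $(j{+}1)$-st tensor factors.  Each $B_n$ splits into internal-degree components $A_{c_1} \otimes \cdots \otimes A_{c_n}$ that $d$ preserves, so $B_* = \bigoplus_{m \ge 0}(B_*)_m$, and since $H_i(B_*) = \Tor^A_i(\kk,\kk)$, Koszulity of $A$ is precisely the statement that for each $m \ge 1$ the finite complex $(B_*)_m$ (supported in homological degrees $1,\dots,m$) has homology concentrated in degree $m$.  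Given $\alpha = (\alpha_1,\dots,\alpha_\ell)$ of $m$, I would set $C_i \subseteq (B_i)_m$ equal to the span of the components $A_{\beta_1}\otimes\cdots\otimes A_{\beta_i}$ over compositions $\beta$ of $m$ with $i$ parts that coarsen $\alpha$.  Applying $\mu_j$ to such a component merges $\beta_j$ and $\beta_{j+1}$; deleting a divider from a coarsening of $\alpha$ is again a coarsening of $\alpha$, so $d(C_i) \subseteq C_{i-1}$ and $(C_*,d)$ is a subcomplex of $\kk \otimes_A \mathcal{B}_A$, with $C_\ell = A_{\alpha_1}\otimes\cdots\otimes A_{\alpha_\ell}$ (only $\beta = \alpha$) and $C_1 = A_m$ (only $\beta = (m)$).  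The $G$-equivariance is then immediate: each $C_i$ is a sum of $A$-grading pieces, each $G$-stable, and the bar differential is $G$-equivariant because multiplication in $A$ is.

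The content is the exactness of $C_*$ away from $C_\ell$, i.e. $H_i(C_*) = 0$ for $i \le \ell-1$, which I would prove by induction on the excess $\epsilon(\alpha) := m-\ell = \sum_i(\alpha_i-1)$.  If $\epsilon(\alpha) = 0$ then $\alpha = (1,\dots,1)$, every composition of $m$ coarsens $\alpha$, so $C_* = (B_*)_m$ and the claim is exactly the Koszulity statement above.  If $\epsilon(\alpha) \ge 1$, choose $k$ with $\alpha_k \ge 2$ and let $\alpha'$ be $\alpha$ with the part $\alpha_k$ replaced by the two parts $1,\alpha_k-1$.  Since $\alpha'$ refines $\alpha$, every coarsening of $\alpha$ coarsens $\alpha'$, so $C_*(\alpha)$ is a subcomplex of $C_*(\alpha')$; the quotient $Q_* = C_*(\alpha')/C_*(\alpha)$ is spanned by components indexed by coarsenings of $\alpha'$ that retain the one ``new'' divider, and such a coarsening factors uniquely as a coarsening of $\alpha^L := (\alpha_1,\dots,\alpha_{k-1},1)$ followed by one of $\alpha^R := (\alpha_k-1,\alpha_{k+1},\dots,\alpha_\ell)$.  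Moreover the only term of the bar differential not respecting this splitting — the $\mu_j$ merging across the new divider — maps into $C_*(\alpha)$ and so dies in $Q_*$, whence $Q_* \cong C_*(\alpha^L)\otimes C_*(\alpha^R)$ as complexes, with $Q_i = \bigoplus_{a+b=i} C_a(\alpha^L)\otimes C_b(\alpha^R)$.  Since $\epsilon(\alpha^L) + \epsilon(\alpha^R) = \epsilon(\alpha) - 1$, both excesses are strictly smaller, so by induction $C_*(\alpha^L)$ and $C_*(\alpha^R)$ have homology concentrated in their top degrees $k$ and $\ell-k+1$; by the Künneth theorem over the field $\kk$, $H_*(Q_*)$ is concentrated in degree $k + (\ell-k+1) = \ell+1$, and by induction so is $H_*(C_*(\alpha'))$.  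Feeding $0 \to C_*(\alpha) \to C_*(\alpha') \to Q_* \to 0$ into the long exact homology sequence, for every $i \le \ell-1$ the group $H_i(C_*(\alpha))$ is squeezed between $H_{i+1}(Q_*)$ and $H_i(C_*(\alpha'))$, both of which vanish because $i,i{+}1 \le \ell < \ell+1$; hence $H_i(C_*(\alpha)) = 0$, completing the induction.

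The main obstacle is the middle of the inductive step: verifying that the quotient complex $Q_*$ really is the tensor product $C_*(\alpha^L)\otimes C_*(\alpha^R)$ on the nose.  This requires the bookkeeping that the new divider cleanly separates the two blocks of tensor factors, together with a routine but sign-sensitive check that the quotiented bar differential matches the tensor-product differential $d^L\otimes 1 \pm 1\otimes d^R$ with its Koszul sign $(-1)^{a}$ on the second summand, $a$ being the number of parts of the left block.  Everything else — the identification with the bar complex, the reformulation of Koszulity, the subcomplex and $G$-equivariance checks, and the Künneth/long-exact-sequence endgame — is formal.  (Alternatively, one may simply invoke \cite[Chap.~2, Prop.~8.3]{PP}, whose argument is of exactly this shape.)
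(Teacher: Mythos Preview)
The paper does not actually give its own proof of this statement: it is simply quoted from \cite[Chap.~2, Prop.~8.3]{PP}, and the paper immediately moves on to the corollary obtained by taking Euler characteristics.  So there is no ``paper's proof'' to compare against beyond the cited reference.

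That said, your argument is correct and is precisely the standard proof (as you yourself note in the final parenthetical).  The identification of $C_*$ as a subcomplex of the degree-$m$ strand of the normalized bar complex is right, the base case $\alpha=(1^m)$ is exactly the Koszulity hypothesis, and the inductive step via the short exact sequence $0 \to C_*(\alpha) \to C_*(\alpha') \to Q_* \to 0$ with $Q_* \cong C_*(\alpha^L) \otimes C_*(\alpha^R)$ is the heart of the Polishchuk--Positselski argument.  Your bookkeeping on excesses, top degrees, and the long exact sequence is accurate, and you have correctly flagged the one place requiring genuine care: checking that the induced differential on $Q_*$ matches the tensor-product differential with the Koszul sign $(-1)^a$.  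This is routine once written out, since the only bar-differential term crossing the new divider is the single $\mu_j$ that merges across it, and that one lands in $C_*(\alpha)$ and hence vanishes in $Q_*$.
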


Taking the alternating sum of the Euler characteristics term-by-term in \eqref{PP-exact=sequence} yields the
following, where here we conflate a $\kk \subgroup$-module $A_k$
with its character $\chi_{A_k}$.

\begin{cor}
\label{corollary-of-P-P-exact-sequence}
In the above setting, the character of the $\kk \subgroup$-module
$\ker(d_\ell: C_\ell \rightarrow C_{\ell-1})$
has this expression
\begin{align*}
\chi_{\ker(d_\ell)}
=\sum_{i=1}^\ell (-1)^{\ell-i} \chi_{C_i}
&=\sum_{i=1}^\ell (-1)^{\ell-i}
 \sum_{\substack{\beta \colon \ell(\beta)=i \\ \beta \: {\rm      coarsens   } \: \alpha}} 
  A_{\beta_1} \otimes \cdots \otimes A_{\beta_i}\\ 
&=\det\left[
\begin{matrix}
A_{\alpha_1} & A_{\alpha_1+\alpha_2}  & A_{\alpha_1+\alpha_2+\alpha_3}  & \cdots &A_m  \\
A_0 & A_{\alpha_2} & A_{\alpha_2+\alpha_3}  &\cdots  &A_{m-\alpha_1}    \\
0 & A_0 & A_{\alpha_3} & \cdots & A_{m-(\alpha_1+\alpha_2)}   \\
0 & 0 &   &   & \vdots\\
\vdots & \vdots &  &  & A_{\alpha_{\ell-1}+\alpha_\ell}\\
0 & 0   &   \cdots     & A_0 &A_{\alpha_\ell}
\end{matrix}
\right]
\end{align*}
as an $\ell \times \ell$ Toeplitz matrix minor for the sequence of $\kk \subgroup$-modules $(A_0,A_1,A_2,\ldots)$.  In particular, when $\kk=\C$,
then all Toeplitz minors
of this form are genuine characters in $R_\C(\subgroup)$.
\end{cor}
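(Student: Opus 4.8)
The plan is to read the claimed identity off the exact sequence \eqref{PP-exact=sequence} in two independent steps: an Euler characteristic computation in $R_\C(G)$, followed by a purely combinatorial identification of the resulting alternating sum with the displayed $\ell\times\ell$ Toeplitz minor. The positivity conclusion is then immediate.

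First I would invoke additivity of characters along exact sequences of finite-dimensional $\kk G$-modules (equivalently, additivity of $g\mapsto\operatorname{tr}(g)$ on short exact sequences). Because $A$ is finitely generated and standard graded, each $A_k$ is finite-dimensional, hence so is each $C_i=\bigoplus_\beta A_{\beta_1}\otimes\cdots\otimes A_{\beta_i}$, and \eqref{PP-exact=sequence} is a bounded exact complex of finite-dimensional $\kk G$-modules. Splicing it into short exact sequences and summing characters with signs gives $\chi_{\ker(d_\ell)}=\sum_{i=1}^{\ell}(-1)^{\ell-i}\chi_{C_i}$, the sign pattern being forced by $\ker(d_\ell)$ sitting at the homological-degree-$\ell$ end of the complex. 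Expanding each $\chi_{C_i}$ as a sum over compositions $\beta$ with $i$ parts coarsening $\alpha$ produces the middle expression of the corollary.

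The combinatorial heart is the second step. The displayed matrix is lower Hessenberg, with $(p,q)$-entry $A_{\alpha_p+\alpha_{p+1}+\cdots+\alpha_q}$ for $p\le q$, entry $A_0=\kk$ on the subdiagonal, and $0$ below it. In the Leibniz expansion of its determinant, a permutation $\sigma$ contributes a nonzero term only when $\sigma(p)\ge p-1$ for every $p$; a short induction (follow the orbit of $1$: it forces $\sigma(c)=c-1$ on an initial interval and then closes up, after which one recurses on the remaining indices) shows such $\sigma$ are exactly the products of cycles $(a,\ b,\ b-1,\ \dots,\ a+1)$ over the blocks $\{a,a+1,\dots,b\}$ of a partition of $\{1,\dots,\ell\}$ into consecutive intervals. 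Such a $\sigma$ corresponds to the composition $\beta$ whose parts are $\alpha$ summed over the successive blocks; the product of the matrix entries along $\sigma$ is $A_{\beta_1}\otimes\cdots\otimes A_{\beta_{\ell(\beta)}}$ (each subdiagonal factor being the unit $A_0$), and $\operatorname{sgn}(\sigma)=\prod_t(-1)^{|\text{block}_t|-1}=(-1)^{\ell-\ell(\beta)}$. Summing over all $\sigma$ thus reproduces exactly $\sum_i(-1)^{\ell-i}\sum_\beta A_{\beta_1}\otimes\cdots\otimes A_{\beta_i}$, and since every identification used is built only from reassociating tensor factors and the identity on $A_0$, it is $G$-equivariant, so the equality holds in $R_\C(G)$. (Alternatively, the same identity follows by induction on $\ell$ via Laplace expansion along the first row.)

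Finally, since the complex $(C_*,d)$ is $G$-equivariant, $\ker(d_\ell)$ is a genuine $\kk G$-submodule of $C_\ell=A_{\alpha_1}\otimes\cdots\otimes A_{\alpha_\ell}$; for $\kk=\C$ it is therefore an honest complex representation, so $\chi_{\ker(d_\ell)}\geq_{R_\C(G)}0$, and combining this with the identity just established shows every Toeplitz minor of this shape for $(A_0,A_1,A_2,\dots)$ is a genuine character. I expect the only genuinely fiddly point to be pinning down the sign and the block/coarsening dictionary in the determinant expansion precisely; the representation-theoretic ingredients (additivity of characters, $G$-equivariance of the Polishchuk--Positselski structure maps) are routine.
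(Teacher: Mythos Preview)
Your argument is correct and follows the same approach the paper indicates: the paper simply says ``taking the alternating sum of the Euler characteristics term-by-term in \eqref{PP-exact=sequence}'' and leaves both the determinant identification and the positivity remark implicit. You have supplied the details the paper omits, in particular the lower-Hessenberg Leibniz expansion matching nonvanishing permutations with interval partitions (coarsenings of $\alpha$), which is exactly the combinatorics underlying the Toeplitz-minor formula.
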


\begin{example}\rm
When $\ell=2$ so that $\alpha=(j,k)$, the exact sequence \eqref{PP-exact=sequence} looks like
$$
0 \rightarrow \ker(d_2)
\rightarrow A_j \otimes A_k 
\rightarrow A_{j+k}
\rightarrow 0
$$
giving this character identity 
$$
\det\left[
\begin{matrix}
A_j & A_{j+k} \\
A_0 & A_k
\end{matrix}
\right]
=\chi_{\ker(d_2)} \quad (\geq_{R_\C(\subgroup)} 0 \text{ if }\kk=\C).
$$

When $\ell=3$ so that $\alpha=(a,b,c)$ , the exact sequence \eqref{PP-exact=sequence} looks like
$$
0 \rightarrow \ker(d_3)
\rightarrow A_a \otimes A_b \otimes A_c 
\rightarrow 
\begin{matrix}
    A_{a+b} \otimes A_c \\
\oplus \\
A_a \otimes A_{b+c}
\end{matrix}
\rightarrow A_{a+b+c}
\rightarrow 0
$$
giving this character identity 
$$
\det\left[
\begin{matrix}
A_a & A_{a+b} & A_{a+b+c}\\
A_0 & A_b & A_{b+c}\\
0 & A_0 & A_{c}
\end{matrix}
\right]
=\chi_{\ker(d_3)} \quad (\geq_{R_\C(\subgroup)} 0 \text{ if }\kk=\C).
$$

When $\ell=4$ so that $\alpha=(a,b,c,d)$, the exact sequence \eqref{PP-exact=sequence} looks like 
$$
0 \rightarrow \ker(d_4)
\rightarrow A_a \otimes A_b \otimes A_c  \otimes A_d
\rightarrow 
\begin{matrix}
A_{a+b} \otimes A_c \otimes A_d\\ 
\oplus \\
A_a \otimes A_{b+c} \otimes A_d \\
\oplus\\
A_a \otimes A_b \otimes A_{c+d}
\end{matrix}
\rightarrow 
\begin{matrix}
A_{a+b+c} \otimes A_d \\
\oplus\\
A_{a+b} \otimes A_{c+d}\\
\oplus\\
A_a \otimes A_{b+c+d}
\end{matrix}
\rightarrow A_{a+b+c+d}
\rightarrow 0
$$
giving this character identity 
$$
\det\left[
\begin{matrix}
A_a & A_{a+b} & A_{a+b+c}&A_{a+b+c+d}\\
A_0 & A_b & A_{b+c}&A_{b+c+d}\\
0 & A_0 & A_{c}& A_{c+d}\\
0 & 0 & A_0 & A_d
\end{matrix}
\right]
=\chi_{\ker(d_4)} \quad (\geq_{R_\C(\subgroup)} 0 \text{ if }\kk=\C).
$$    
\end{example}

One can apply this to Chow rings of
matroids using work of Maestroni and McCullough \cite{MaestroniMcCullough}.

\begin{thm} \cite{MaestroniMcCullough}
For simple matroids $\M$, the Chow ring $A(\L_\M,\G_{\max})$ is Koszul.
\end{thm}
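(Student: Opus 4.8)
The plan is to follow Mastroeni and McCullough in establishing Koszulity through the existence of a \emph{Koszul filtration} of $A(\M)$: a family $\mathcal{F}$ of ideals, each generated by elements of $A^1$, containing $(0)$ and $A_+=\bigoplus_{k\ge 1}A^k$, and such that every nonzero $\mathfrak{a}\in\mathcal{F}$ contains some $\mathfrak{b}\in\mathcal{F}$ with $\mathfrak{a}=\mathfrak{b}+(\ell)$ for a single linear form $\ell$ and with the colon ideal $(\mathfrak{b}:_{A(\M)}\ell)$ again lying in $\mathcal{F}$; it is a standard fact (Conca--Trung--Valla; see also Conca's survey on Koszul algebras) that a standard graded algebra admitting a Koszul filtration is Koszul. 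Since the Feichtner--Yuzvinsky integral basis of Corollary~\ref{cor: mon_basis} shows that $A(\M)\otimes_\Z \kk$ has the same presentation over every field $\kk$, it suffices to work over a fixed $\kk$. It is worth noting up front that one cannot simply invoke the Feichtner--Yuzvinsky Gr\"obner basis of Theorem~\ref{FY-GB-theorem} together with ``quadratic Gr\"obner basis $\Rightarrow$ Koszul'': that initial ideal has generators $x_F x_{F'}^{\,\rk(F')-\rk(F)}$ of degree strictly above $2$ whenever $\rk(F')-\rk(F)\ge 3$, so $A(\M)$ is not visibly $G$-quadratic and a genuinely different argument is needed.

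First I would assemble the structural input about $A(\M)$ that powers the induction, all of it available from the work of Adiprasito--Huh--Katz. For a nonempty proper flat $F$ one has: (a) the ``restriction/contraction'' descriptions $A(\M)/(x_G:G\not\le F)\cong A(\M|_F)$ and $A(\M)/(x_G:G\not\ge F)\cong A(\M/F)$, where $\M|_F$ and $\M/F$ are the corresponding matroid minors on smaller ground sets; (b) the ``fan'' (local) structure identifying the principal ideal $x_F\cdot A(\M)$, up to a degree shift, with $A(\M|_F)\otimes_\Z A(\M/F)$, and hence a clean presentation of $\mathrm{Ann}_{A(\M)}(x_F)$ in terms of the variables $x_G$ with $G$ incomparable to $F$ together with the linear relations defining $A(\M|_F)$ and $A(\M/F)$. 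These are exactly the tools that reduce questions about $A(\M)$ to questions about Chow rings of matroids with fewer elements.

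Next I would build the filtration and run the induction on $|E|$. Fix a linear extension of the poset $(\FF,\subseteq)$ and let $\mathcal{F}$ consist of the images in $A(\M)$ of ideals $(x_F:F\in\mathcal{I})+J$ of $S$, where $\mathcal{I}$ ranges over down-sets of this linear extension, enlarged as necessary by the colon ideals the axioms demand. The ``cyclic'' condition is then immediate: remove the generators one at a time in reverse linear-extension order, so that each quotient $\mathfrak{a}/\mathfrak{b}$ is generated by a single $x_F$. The real work is verifying $(\mathfrak{b}:x_F)\in\mathcal{F}$; here one uses (b) to recognize the colon ideal as generated by the remaining $x_G$ together with images of the $A(\M|_F)$- and $A(\M/F)$-relations, invokes (a) to realize these as coming from the Chow rings of the smaller minors, and then appeals to the inductive hypothesis that those minors already carry Koszul filtrations in order to conclude that the colon ideal has the required form. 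The base cases, matroids of rank $\le 2$, are trivial, since there $A(\M)$ is a polynomial ring modulo linear forms and a few quadratic monomials.

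The step I expect to be the main obstacle is precisely the closure of $\mathcal{F}$ under the colon operation: one must choose the family of ideals large enough that every $(\mathfrak{b}:x_F)$ that arises is a member, yet small enough that each member still admits the cyclic-quotient step, and one must check that the Koszul filtrations furnished inductively for $\M|_F$, $\M/F$ (and, for the deletion step, for a one-element deletion $\M\setminus e$) glue into a single coherent family for $\M$. This bookkeeping over the lattice of flats, together with the precise identification of $\mathrm{Ann}(x_F)$, is the delicate heart of the argument. Should the colon-ideal closure prove too cumbersome to control directly, the fallback is to instead prove that $A(\M)$ is \emph{LG-quadratic} --- a flat Gr\"obner degeneration, by quadrics, of some quadratic algebra --- by adjoining auxiliary variables that split each high-degree Feichtner--Yuzvinsky initial term $x_F x_{F'}^{m}$ into a chain of quadratic binomials; LG-quadratic algebras are Koszul, so this too would finish the proof.
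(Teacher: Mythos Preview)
The paper does not give its own proof of this theorem: it is stated as a quotation of Mastroeni and McCullough \cite{MM} and then applied as a black box to obtain Corollary~\ref{Chow-ring-Koszul-consequence}. So there is nothing in the paper to compare your argument against.

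That said, your proposal is aimed squarely at the argument in the cited reference: Mastroeni and McCullough do prove Koszulity of $A(\M)$ by constructing a Koszul filtration, and the ingredients you list (the identification of $x_F\cdot A(\M)$ with a tensor product of Chow rings of the restriction and contraction, the resulting description of $\mathrm{Ann}(x_F)$, and an induction on the size of the ground set) are exactly what drive their proof. Your observation that the Feichtner--Yuzvinsky Gr\"obner basis is not quadratic, so one cannot simply appeal to ``quadratic initial ideal $\Rightarrow$ Koszul,'' is also correct and is precisely the reason a nontrivial argument is required. What you have written, however, is a plan rather than a proof: the family $\mathcal{F}$ is not specified precisely enough to verify closure under the colon operation, and the phrase ``enlarged as necessary by the colon ideals the axioms demand'' hides exactly the work that makes the Mastroeni--McCullough paper nontrivial. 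In particular, the correct family is not simply built from order ideals of a linear extension of $\FF$; one must carefully interleave ideals coming from the Koszul filtrations of all the minors, and the verification that every colon ideal that appears is again a member of the family is a substantial case analysis. Your fallback suggestion of proving LG-quadraticity by splitting the high-degree initial terms is plausible-sounding but is not how the result is established in \cite{MM}, and you would need to actually exhibit such a degeneration to make that route viable.
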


This gives the following promised generalization of 
\eqref{multiplication-surjects}.

\begin{cor}
\label{Chow-ring-Koszul-consequence}
For a matroid $\M$ of rank $r+1$ with
Chow ring $A(\L_\M,\G_{\max})=\bigoplus_{k=0}^r A_k$, and any composition $\alpha=(\alpha_1,\ldots,\alpha_\ell)$ with $m:=\sum_i \alpha \leq r$, the $\ell \times \ell$ Toeplitz minor determinant as shown in
in Corollary~\ref{corollary-of-P-P-exact-sequence} is a genuine character in $R_\C(\subgroup)$ for $\subgroup=\Aut(\M)$.
\end{cor}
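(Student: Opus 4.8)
The plan is to obtain this as an essentially immediate consequence of two inputs already in hand: the Koszulity of $A(\M)$ due to Maestroni and McCullough, and the Euler-characteristic identity of Corollary~\ref{corollary-of-P-P-exact-sequence}. So the work is entirely in checking that the hypotheses of the latter apply to the Chow ring over $\C$.

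First I would pass to the complex Chow ring $A(\M)_\C := A(\M) \otimes_\Z \C = \bigoplus_{k=0}^r (A_k)_\C$. Because $A(\M)$ is generated as a $\Z$-algebra by its degree-one classes $x_F$, the same is true after tensoring with $\C$, so $A(\M)_\C$ is a finitely generated standard graded (commutative, hence associative) $\C$-algebra; by Corollary~\ref{Chow-ring-carries-perm-reps-cor} the group $G = \Aut(\M)$ acts on it by graded $\C$-algebra automorphisms, each $(A_k)_\C$ is a finite-dimensional $\C G$-module, $(A_0)_\C = \C$ is the trivial module, and $(A_k)_\C = 0$ for $k > r$. Next, since Maestroni and McCullough's theorem gives Koszulity of $A(\M)$ over $\C$ (equivalently over any field), the Polishchuk--Positselski construction applies with $\kk = \C$, $A = A(\M)_\C$, and the given composition $\alpha = (\alpha_1, \ldots, \alpha_\ell)$: it produces the $G$-equivariant complex $(C_*, d)$ of \eqref{PP-exact=sequence}, whose term $C_i = \bigoplus_\beta (A_{\beta_1})_\C \otimes \cdots \otimes (A_{\beta_i})_\C$ involves only indices $\beta_j$ obtained by summing consecutive blocks of $\alpha$, hence lying in $\{0,1,\ldots,m\} \subseteq \{0,1,\ldots,r\}$ by the hypothesis $m \le r$; in particular every $C_i$ is a genuine finite-dimensional $\C G$-module. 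Finally I would invoke the exactness of \eqref{PP-exact=sequence} and take the alternating sum of characters term-by-term, as in Corollary~\ref{corollary-of-P-P-exact-sequence}, to conclude that the displayed $\ell \times \ell$ Toeplitz minor determinant equals $\chi_{\ker(d_\ell)}$, the character of the $\C G$-submodule $\ker(d_\ell) \subseteq C_\ell$, and is therefore a genuine character, i.e. $\geq_{R_\C(G)} 0$.

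The only point that deserves more than a passing remark --- and the one I expect to be the sole obstacle --- is that Koszulity is a priori sensitive to the choice of ground field, so one must make sure Maestroni and McCullough's result is being used in a form valid over $\C$; granting that (as their theorem does), the rest is the formal computation of Corollary~\ref{corollary-of-P-P-exact-sequence} specialized to $A(\M)_\C$, with no further matroid-theoretic or combinatorial content.
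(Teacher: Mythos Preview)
Your proposal is correct and matches the paper's own argument: the corollary is stated there without proof, as an immediate consequence of combining Maestroni--McCullough's Koszulity theorem with Corollary~\ref{corollary-of-P-P-exact-sequence}, which is exactly what you do. Your extra care in verifying that $A(\M)_\C$ satisfies the standing hypotheses (standard graded, $G$-action by graded automorphisms, Koszulity over $\C$) is appropriate and fills in details the paper leaves implicit.
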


Here is the conjectural lift of the previous corollary to Burnside rings, whose $2\times 2$-case is
Proposition~\ref{thm:2by2kos}.

\begin{conj}
\label{Burnside-Koszul-nonnegativity}
In the same context as Corollary~\ref{Chow-ring-Koszul-consequence}, the analogous Toeplitz minors of $\subgroup$-sets have
$$
\det\left[
\begin{matrix}
[\FY^{\alpha_1}] & [\FY^{\alpha_1+\alpha_2}]  & [\FY^{\alpha_1+\alpha_2+\alpha_3}]  & \cdots &[\FY^m]  \\
[\FY^0] & [\FY^{\alpha_2}] & [\FY^{\alpha_2+\alpha_3}]  &\cdots  &[\FY^{m-\alpha_1}]    \\
0 & [\FY^0] & [\FY^{\alpha_3}] & \cdots & [\FY^{m-(\alpha_1+\alpha_2)}]   \\
0 & 0 &   &   & \vdots\\
\vdots & \vdots & & & [\FY^{\alpha_{\ell-1}+\alpha_\ell}]\\
0 & 0   &   \cdots     & [\FY^0] &[\FY^{\alpha_\ell}]
\end{matrix}
\right] \geq_{B(\subgroup)} 0.
$$
\end{conj}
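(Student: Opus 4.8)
The plan is to upgrade the $R_\C(G)$-inequality of Corollary~\ref{Chow-ring-Koszul-consequence} to an inequality in $B(G)$ by producing an \emph{explicit} $G$-set whose class is the displayed Toeplitz minor. Write $X:=\FY^{\alpha_1}\times\cdots\times\FY^{\alpha_\ell}$. By Corollary~\ref{corollary-of-P-P-exact-sequence} the minor equals, in $R_\C(G)$, the character of $\ker(d_\ell)$ in the Polishchuk--Positselski complex attached to $\alpha$ and $A(\M)$, and it expands as the alternating sum $\sum_{T\subseteq\{1,\dots,\ell-1\}}(-1)^{|T|}\,[C_T]$, where, for a subset $T$ of the $\ell-1$ gaps between consecutive parts of $\alpha$, $C_T=\FY^{\beta^{(1)}}\times\cdots\times\FY^{\beta^{(\ell-|T|)}}$ is the product over the coarsening $\beta_T$ of $\alpha$ obtained by merging across the gaps in $T$ (so $C_\emptyset=X$ and $C_{\{1,\dots,\ell-1\}}=\FY^{m}$). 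The goal is to realize this alternating sum of genuine $G$-sets as the class of a single genuine $G$-set.

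First I would extend the canonical splitting of Proposition~\ref{thm:2by2kos} to many parts. For a standard monomial $d=x_{F_1}^{m_1}\cdots x_{F_p}^{m_p}\in\FY^{s}$ and a composition $\gamma$ of $s$, define the \emph{canonical cut} $\mathrm{cut}_\gamma(d)$ by reading the exponents of $d$ from left to right and cutting after each partial sum of $\gamma$; a cut falling strictly inside a block $x_{F_j}^{m_j}$ splits that block, so that $F_j$ becomes the last flat of one piece and the first flat of the next. Exactly as in the proof of Proposition~\ref{thm:2by2kos} one checks that each piece is again an FY-monomial, that $\mathrm{cut}_\gamma$ is injective with inverse $(b_1,\dots,b_q)\mapsto b_1\cdots b_q$, and that it is $G$-equivariant since it uses only the degrees and the ranks $\{\rk(F_j)\}$, which $G$ preserves. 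Splitting each factor of $C_T$ along the sub-composition of $\alpha$ summing to its degree then yields $G$-equivariant injections $\phi_T\colon C_T\into X$, with $\phi_\emptyset=\mathrm{id}_X$ and $\phi_{\{1,\dots,\ell-1\}}=\mathrm{cut}_\alpha$ (the latter being the iterated embedding from the Remark following Proposition~\ref{thm:2by2kos}).

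The heart of the argument would be a combinatorial lemma asserting that the images $U_T:=\phi_T(C_T)\subseteq X$ form the intersection-closed family generated by the single-gap ones: $U_T=\bigcap_{t\in T}U_{\{t\}}$ for every $T$, with each $U_{\{t\}}$ a $G$-stable subset of $X$. Granting this, M\"obius inversion over the Boolean lattice of subsets of $\{1,\dots,\ell-1\}$ gives $\sum_{T}(-1)^{|T|}[U_T]=\bigl[\,X\setminus\bigcup_{t}U_{\{t\}}\,\bigr]$, a genuine $G$-set; since $[U_T]=[C_T]$, this is exactly the asserted inequality. The low cases are reassuring: $\ell\le 2$ recovers Proposition~\ref{thm:2by2kos}, and for any $T$ containing no two adjacent gaps the identity $U_T=\bigcap_{t\in T}U_{\{t\}}$ does hold for the naive maps.

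I expect the lemma $U_T=\bigcap_{t\in T}U_{\{t\}}$ to be the main obstacle, and for the naive cut-maps above it already appears to fail whenever $T$ contains adjacent gaps: membership of $\mathbf b=(b_1,\dots,b_\ell)$ in $U_{\{t\}}$ amounts simply to $b_tb_{t+1}\in\FY$, controlled by the exponent that $b_t$ and $b_{t+1}$ jointly place on their shared flat, whereas membership in a $U_T$ that fuses three or more consecutive pieces requires $b_t b_{t+1}\cdots b_{t'}\in\FY$, a bound on the \emph{total} such exponent; since one can arrange exponents $e,e',e''$ on a common flat with $e+e'$ and $e'+e''$ both within the FY-bound at that flat while $e+e'+e''$ exceeds it, $U_T$ can be strictly smaller than $\bigcap_{t\in T}U_{\{t\}}$ and the naive inclusion--exclusion breaks. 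Overcoming this is precisely what keeps the statement conjectural. Two remedies seem worth pursuing: (a) replace the canonical cut by a more subtle $G$-equivariant family of embeddings $\phi_T$ --- for instance a parenthesis or lattice-path rule in the spirit of the second proof of Theorem~\ref{main-theorem}, engineered so that the intersection lattice of $\{\mathrm{im}\,\phi_{\{t\}}\}$ closes up to $\{\mathrm{im}\,\phi_T\}$; or (b) bypass inclusion--exclusion and construct directly, and $G$-equivariantly, an acyclic matching (discrete Morse theory) on the Polishchuk--Positselski complex whose critical cells assemble into a $G$-set of the required class --- in effect a ``Koszulity with permutation modules'' refinement of the Maestroni--McCullough theorem. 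In either case the input one really needs is a $G$-equivariant combinatorial model for the syzygies of $\kk$ over $A(\M)$; since the $G$-action on $\FY$ factors through the fibration over extended supports of Proposition~\ref{iso-to-product-of-chains}, with $G$ acting trivially relative to flag stabilizers inside each fiber, this should reduce to a purely combinatorial question about products of chains, which I would settle first in the smallest open case $\ell=3$ before returning to the matroid setting.
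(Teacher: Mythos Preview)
The statement you are attempting is a \emph{conjecture} in the paper, not a theorem: there is no proof in the paper to compare against. The authors explicitly leave Conjecture~\ref{Burnside-Koszul-nonnegativity} open, proving only the $2\times 2$ case (Proposition~\ref{thm:2by2kos}) and the single $3\times 3$ case $\alpha=(1,1,1)$ (Theorem~\ref{thm:3by3}). Your write-up is honest about this---you yourself say ``overcoming this is precisely what keeps the statement conjectural''---so what you have submitted is not a proof but a proof \emph{strategy} together with a correctly identified obstruction.

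Your diagnosis of the obstruction is accurate: for the naive cut-maps $\phi_T$, the identity $U_T=\bigcap_{t\in T}U_{\{t\}}$ genuinely fails once $T$ contains adjacent gaps, for exactly the exponent-overflow reason you describe. So the inclusion--exclusion approach, as stated, does not go through. It is worth noting that the paper's proof of the $\alpha=(1,1,1)$ case does \emph{not} proceed by anything resembling your strategy: rather than engineering an intersection-closed family of images inside $\FY^1\times\FY^1\times\FY^1$, the authors construct by brute force a single $G$-equivariant injection
\[
(\FY^1\times\FY^2)\ \sqcup\ (\FY^2\times\FY^1)\ \hookrightarrow\ (\FY^1\times\FY^1\times\FY^1)\ \sqcup\ \FY^3
\]
via an explicit case analysis on the shape of each input tuple (their Figures~\ref{fig: 3x3a} and~\ref{fig: 3x3b}). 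This is much closer in spirit to your remedy~(b) than to~(a), but it is entirely ad~hoc and gives no hint of a uniform mechanism for general $\alpha$. Your remedy~(a)---redesigning the embeddings so that their images form a distributive lattice under intersection---would, if it worked, be far more conceptual than anything in the paper; but you have not supplied such embeddings, and the paper's $3\times 3$ argument suggests the combinatorics may resist a clean closed form. In short: your outline is a reasonable research plan, but it is not a proof, and neither is anything in the paper.
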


As a bit of further evidence for Conjecture~\ref{Burnside-Koszul-nonnegativity}, we check it for the Toeplitz
minors with $\alpha=(1,1,1)$.

\begin{thm} \label{thm:3by3}
For any matroid $\M$, the Chow ring $A(\L_\M,\G_{\max})$ has 
    \begin{align*}
    \det\left|\begin{matrix}
    [\FY^1] & [\FY^2] & [\FY^3] \\
    [\FY^0] & [\FY^1] & [\FY^2] \\
    0 & [\FY^0] & [\FY^1]
    \end{matrix}\right| \geq_{B(\subgroup)} 0.
    \end{align*}
\end{thm}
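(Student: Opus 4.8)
The plan is to expand the determinant in $B(G)$, reduce the claim to a single $G$-equivariant injection of $G$-sets, and build that injection from the maps of Proposition~\ref{thm:2by2kos}.

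\emph{Reduction.} Since $[\FY^0]$ is the class of a one-point $G$-set, cofactor expansion along the bottom row shows the determinant equals
$$
[\FY^1]\bigl([\FY^1]^2-[\FY^2]\bigr)\;-\;\bigl([\FY^1][\FY^2]-[\FY^3]\bigr)
$$
in $B(G)$. By Proposition~\ref{thm:2by2kos}, applied with $(j,k)=(1,1)$ and with $(j,k)=(1,2)$, both parenthesized terms are genuine: writing $\iota_{j,k}\colon\FY^{j+k}\hookrightarrow\FY^j\times\FY^k$ for the injection there, they are represented by the $G$-stable complements $Z:=(\FY^1\times\FY^1)\setminus\iota_{1,1}(\FY^2)$ and $Z':=(\FY^1\times\FY^2)\setminus\iota_{1,2}(\FY^3)$. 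Hence the minor equals $[\FY^1\times Z]-[Z']$, and it suffices to produce a $G$-equivariant injection $Z'\hookrightarrow\FY^1\times Z$; equivalently $\FY^1\times Z\cong Z'\sqcup W$ for some $G$-set $W$, which is exactly the $B(G)$-inequality. (Corollary~\ref{Chow-ring-Koszul-consequence} already gives that the minor is a genuine element of $R_\C(G)$; we want the finer statement. Throughout we take $r\geq3$, the range in which $\alpha=(1,1,1)$ satisfies $\sum_i\alpha_i\leq r$; this also forces $x_E^3\in\FY^3$ and $A\subsetneq E$ for every rank-three flat $A$, both used below.)

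\emph{The injection.} I would exploit that $\iota_{j,k}$ peels off the variable of the minimal flat of the support. Thus a pair $(x_G,x_H)$ lies in $Z$ exactly when it is not a \emph{legal split} of a degree-two FY-monomial---$G\not\subseteq H$, or $G=H$ with $\rk(G)=2$, or $G\subsetneq H$ with $\rk(H)=\rk(G)+1$---and $(x_F,c)\in Z'$ exactly when $x_F\cdot c$ is not a degree-three FY-monomial whose minimal flat is $F$. Given $(x_F,c)\in Z'$, write $\iota_{1,1}(c)=(x_P,x_Q)$ with $P\subseteq Q$, and output the triple gotten by inserting $x_F$ into one of the three positions of $(x_P,x_Q)$, the position chosen purely from the containments among $F,P,Q$ and the exponent vector of $c$, so that the \emph{last two} coordinates of the output are never a legal split---placing the output in $\FY^1\times Z=(\FY^1)^3\setminus\bigl(\FY^1\times\iota_{1,1}(\FY^2)\bigr)$. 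Membership in $Z'$ is exactly what rules out the ``totally nested, all gaps large'' pattern, the only one forcing every insertion to produce a legal split; the single boundary configuration $c=x_A^2$, $\rk(A)=3$, $F=A$---which lies in $Z'$ because $x_A^3\notin\FY^3$, and in which $(x_P,x_Q)=(x_A,x_A)$ is itself legal---is routed instead through $x_E$, using that $(x_E,x_A)$ is never a legal split. Injectivity holds because the position used together with the output triple recovers $(x_F,c)$; $G$-equivariance is automatic, since every choice uses only $G$-invariant data---containments, matroid ranks, exponents---exactly as in the proofs of Theorem~\ref{main-theorem} and Proposition~\ref{thm:2by2kos}.

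\emph{Main obstacle.} The real work is making the position assignment globally consistent: no single permutation of the three coordinates works in all cases, so one must split into sub-cases (according to whether $F$ lies below, above, inside, or incomparable to the flats of $\supp(c)$) and assign ad hoc targets to the rank-boundary and coincidence configurations---a flat of rank $2$, a rank gap equal to $2$ or $3$, or $F$ equal to a flat of $\supp(c)$---all while keeping the images pairwise disjoint and $G$-stable. These are precisely the configurations where a naive inclusion--exclusion over the images of $\FY^3$, $\FY^1\times\FY^2$, and $\FY^2\times\FY^1$ inside $(\FY^1)^3$ fails, since the image of $\FY^3$ sits \emph{strictly} inside the intersection of the other two (equality holding only when no coincidence occurs, e.g.\ because $x_A^3\notin\FY^3$ when $\rk(A)=3$). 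Since only finitely many isomorphism types of the ranked flag spanned by $\{F\}\cup\supp(c)$ are relevant, each case is a short check, and this bookkeeping is where I expect essentially all of the difficulty to lie.
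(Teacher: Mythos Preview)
Your reduction via Proposition~\ref{thm:2by2kos} is clean and correct: expanding along the bottom row and representing the two $2\times2$ minors by the $G$-stable complements $Z=(\FY^1)^2\setminus\iota_{1,1}(\FY^2)$ and $Z'=(\FY^1\times\FY^2)\setminus\iota_{1,2}(\FY^3)$ does reduce the theorem to exhibiting a $G$-equivariant injection $Z'\hookrightarrow\FY^1\times Z$. This is a genuinely different packaging from the paper's proof, which instead builds an injection
\[
(\FY^1\times\FY^2)\,\sqcup\,(\FY^2\times\FY^1)\;\hookrightarrow\;(\FY^1)^3\,\sqcup\,\FY^3
\]
directly, without first subtracting off the $2\times2$ pieces. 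The paper's formulation keeps the extra target $\FY^3$ available, which turns out to absorb many of the ``nice'' inputs (those whose product is already an FY-monomial); your formulation trades that flexibility for a smaller domain.

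However, your proposed mechanism for the injection has a real gap. Inserting $x_F$ into the \emph{first} slot of $(x_P,x_Q)=\iota_{1,1}(c)$ never lands in $\FY^1\times Z$, since the last two coordinates are then $(x_P,x_Q)\in\iota_{1,1}(\FY^2)$ by construction. So you only have two usable slots, not three. More seriously, your claim that membership in $Z'$ leaves ``a single boundary configuration'' where both remaining slots fail is false. Take $P\subsetneq Q$ with $\rk(Q)-\rk(P)\geq2$ and set $F=Q$, $c=x_Px_Q$. Then $(x_F,c)=(x_Q,x_Px_Q)$ lies in $Z'$ (the only preimage under $\iota_{1,2}$ would be $x_Px_Q^2$, whose minimal flat is $P$, not $Q$), yet both insertions give last two coordinates $(x_Q,x_Q)$, a legal split since $\rk(Q)\geq4$. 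There are further such families once $F$ coincides with, or is squeezed between, the flats of $\supp(c)$. Each requires an ad hoc reroute (as you already do once with $x_E$), and you then have to argue that all these reroutes are mutually disjoint from one another \emph{and} from the images of the ordinary slot-insertions. The paper carries out exactly this kind of bookkeeping, but for its own formulation: two pages of tables covering roughly forty configurations. Your reduction does not let you bypass that work; it only reshuffles which configurations are ``hard,'' and your sketch underestimates how many of them there are.
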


\begin{proof}
Multiplying out the determinant, one needs to 
prove the following inequality in $B(\subgroup)$:
$$
[\FY^1 \times \FY^1 
    \times \FY^1] - \left( \begin{matrix} [\FY^2 \times \FY^1]\\ +\\ [\FY^1 \times \FY^2]\end{matrix} \right)  + [\FY^3] \,\, \geq_{B(\subgroup)} 0,
$$
or equivalently, one must show the inequality
$$
[\quad (\FY^2 \times \FY^1)  \,\, \sqcup \,\, (\FY^1 \times \FY^2)\quad ]
\,\, \leq_{B(\subgroup)} \,\, 
[\quad (\FY^1 \times \FY^1 
    \times \FY^1) \,\, \sqcup \,\, \FY^3\quad ].
$$
For this, it suffices to provide an injective $\subgroup$-equivariant map 
\[
(\FY^1 \times \FY^2) \sqcup (\FY^2 \times \FY^1)
\,\, \into \,\, 
(\FY^1 \times \FY^1 
    \times \FY^1) \sqcup \FY^3.
\] 
Such a map is summarized schematically in Figures~\ref{fig: 3x3a} and \ref{fig: 3x3b}, with certain abbreviation conventions: the variables $x,y,z$ always abbreviate
the variables $x_{F_1}, x_{F_2},x_{F_3}$ for a generic nested flag of flats $F_1 \subset F_2 \subset F_3$, while the variable $w$ abbreviates $x_F$ for a flat $F$ incomparable to any of $F_1,F_2,F_3$.

The Figures \ref{fig: 3x3a} and \ref{fig: 3x3b} 
describe for each type of element in $\FY^1 \times \FY^2$ and $\FY^2 \times \FY^1$ an appropriate image in either $\FY^1 \times \FY^1 \times \FY^1$ or $\FY^3$.
Loosely speaking, the maps try to send elements to $\FY^3$ whenever possible, that is, whenever their product is a valid element of $\FY^3$. When this fails, we find an image in $\FY^1 \times \FY^1 \times \FY^1$, carefully trying to keep track of which images have been used by noting various conditions on the $x,y,z,$ and $w$, involving their ranks  and sometimes their {\it coranks}, denoted $\cork(F):=\rk(E)-\rk(F)$. Conditions in gray are forced by the form of the given tuple, while conditions in black are assumed to separate the map into disjoint cases.$\qedhere$

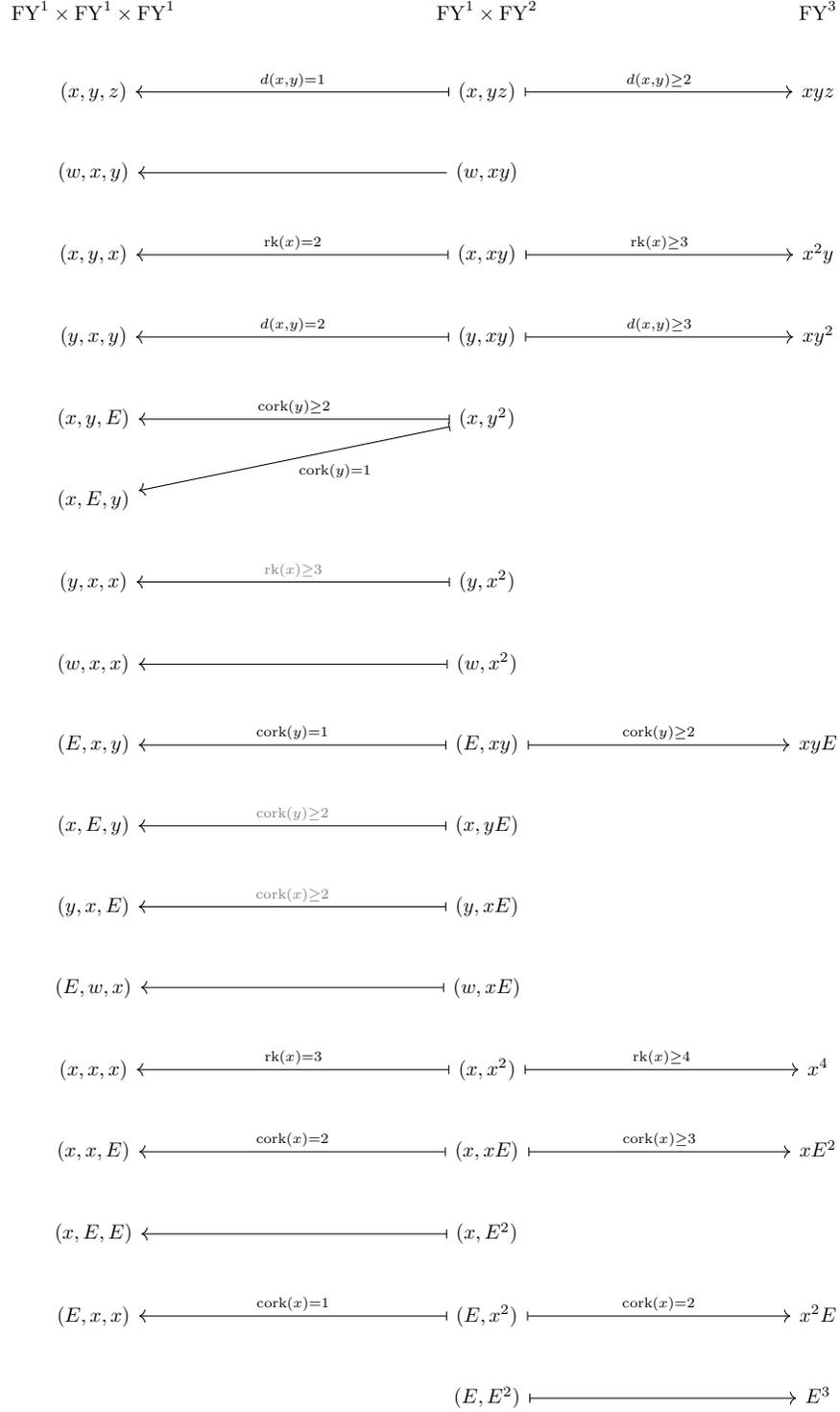
\begin{figure}[ht] 
    \centering
\begin{tikzcd}[scale cd=0.8]
	{\FY^1\times \FY^1\times \FY^1} &&&& {\FY^1\times \FY^2} &&&& {\FY^3} 
    \\
	{(x, y, z)} &&&& {(x, yz)} &&&& xyz
	\arrow["{d(x, y) =1}"', maps to, from=2-5, to=2-1] 
 	\arrow["{d(x, y) \geq 2}", maps to, from=2-5, to=2-9]
    \\
    {(w, x, y)} &&&& {(w, xy)} &&&& 
	\arrow[from=3-5, to=3-1]
     \\
     {(x, y, x)} &&&& {(x, xy)} &&&& x^2y
	\arrow["{\rk(x) = 2}"', maps to, from=4-5, to=4-1] 
 	\arrow["{\rk(x)\geq 3}", maps to, from=4-5, to=4-9]
    \\
    {(y, x, y)} &&&& {(y, xy)} &&&& xy^2
	\arrow["{d(x, y) = 2}"', maps to, from=5-5, to=5-1] 
 	\arrow["{d(x, y) \geq 3}", maps to, from=5-5, to=5-9]
    \\
    {(x, y, \E)} &&&& {(x, y^2)} &&&& 
	\arrow["{\cork(y) \geq 2}"', maps to, from=6-5, to=6-1] 
    \\
    {(x, \E, y)} &&&&  &&&&
	\arrow["{\cork(y) = 1}", maps to, from=6-5, to=7-1] 
    \\
    {(y, x, x)} &&&& {(y, x^2)} &&&& 
	\arrow["{\color{gray} \rk(x) \geq 3}"', maps to, from=8-5, to=8-1] 
    \\
    {(w, x, x)} &&&& {(w, x^2)} &&&& 
    \arrow[maps to, from=9-5, to=9-1]
    \\
    {(\E, x, y)} &&&& {(\E, xy)} &&&& xy\E
    \arrow["{\cork(y) = 1}"', maps to, from=10-5, to=10-1]
    \arrow["{\cork(y) \geq 2}", maps to, from=10-5, to=10-9]
    \\
    {(x, \E, y)} &&&& {(x, yE)} &&&& 
    \arrow["{\color{gray} \cork(y) \geq 2}"', maps to, from=11-5, to=11-1]
    \\
    {(y, x, \E)} &&&& {(y, xE)} &&&& 
    \arrow["{\color{gray} {\cork(x) \geq 2}}"', maps to, from=12-5, to=12-1]
    \\
    {(\E, w, x)} &&&& {(w, xE)} &&&& 
    \arrow[maps to, from=13-5, to=13-1]
    \\
    {(x, x, x)} &&&& {(x, x^2)} &&&& x^4
    \arrow["{\rk(x) = 3}"', maps to, from=14-5, to=14-1]
    \arrow["{\rk(x) \geq 4}", maps to, from=14-5, to=14-9]
    \\
     {(x, x, \E)} &&&& {(x, xE)} &&&& xE^2
    \arrow["{\cork(x) = 2}"', maps to, from=15-5, to=15-1]
    \arrow["{\cork(x) \geq 3}", maps to, from=15-5, to=15-9]
    \\
    {(x, \E, \E)} &&&& {(x, E^2)} &&&&
    \arrow[maps to, from=16-5, to=16-1]
    \\
    {(\E, x, x)} &&&& {(E, x^2)} &&&& x^2E
    \arrow["{\cork(x) = 1}"', maps to, from=17-5, to=17-1]
    \arrow["{\cork(x) = 2}", maps to, from=17-5, to=17-9]
    \\
    &&&& {(\E, \E^2)} &&&& \E^3
    \arrow[maps to, from=18-5, to=18-9]
    \\ \\
\end{tikzcd}
\caption{Part of the injective set map $\FY^1\times \FY^2\into \FY^1\times \FY^1\times \FY^1 \cup \FY^3$}
\label{fig: 3x3a}
\end{figure}

\begin{figure}[ht] 
    \centering
\begin{tikzcd}[scale cd=0.5]
	{\FY^1\times \FY^1\times \FY^1} &&&& {\FY^2\times \FY^1} &&&& {\FY^3} 
    \\
	{(y,z, x)} &&&& {(yz, x)} &&&& 
	\arrow["{\color{gray}{d(y, z) \geq 2} }"', maps to, from=2-5, to=2-1] 
    \\
    {(x, y, w)} &&&& {(xy, w)} &&&& 
	\arrow[from=3-5, to=3-1]
     \\
     {(x, y, x)} &&&& {(xy, x)} &&&& 
	\arrow["{\rk(x)\geq 3}"', maps to, from=4-5, to=4-1] 
    \\
    {(x, x, y)} &&&& &&&& 
	\arrow["{\rk(x) = 2}", maps to, from=4-5, to=5-1] 
    \\
    {(y, x, y)} &&&& {(xy, y)} &&&& 
	\arrow["{d(x, y) \geq 3}"', maps to, from=6-5, to=6-1] 
    \\
    {(x, y, y)} &&&& &&&& 
	\arrow["{d(x, y) = 2}", maps to, from=6-5, to=7-1] 
    \\
    {(y, y, x)} &&&& {(y^2, x)} &&&& 
	\arrow["{\color{gray} \rk(y) \geq 3}"', maps to, from=8-5, to=8-1] 
    \\
    {(x, x, y)} &&&& {(x^2, y)} &&&& 
	\arrow["{\color{gray} \rk(x) \geq 3}"', maps to, from=9-5, to=9-1] 
    \\
    {( x, x, w)} &&&& {(x^2, w)} &&&& 
    \arrow[maps to, from=10-5, to=10-1]
    \\
    {(\E, y, x)} &&&& {(xy, \E)} &&&& 
    \arrow["{\cork(y) \geq 2}"', maps to, from=11-5, to=11-1]
    \\
    {(x,  y, \E)} &&&&  &&&& 
    \arrow["{\cork(y) = 1}", maps to, from=11-5, to=12-1]
    \\
    {(y, \E, x)} &&&& {(y\E, x)} &&&& 
    \arrow["{\color{gray} {\cork(y) \geq 2}}"', maps to, from=13-5, to=13-1]
    \\
    {(\E, x, y)} &&&& {(x\E, y)} &&&& 
    \arrow["{\color{gray} {\cork(x) \geq 2}}"', maps to, from=14-5, to=14-1]
    \\
    {(\E, x, w)} &&&& {(xE, w)} &&&& 
    \arrow[maps to, from=15-5, to=15-1]
    \\
    {(x, x, x)} &&&& {(x^2, x)} &&&& 
    \arrow["{\rk(x) \geq 4}"', maps to, from=16-5, to=16-1]
    \\
    {(E, x, E)} &&&&  &&&& 
    \arrow["{\rk(x) = 3}", maps to, from=16-5, to=17-1]
    \\
     {(x, x, \E)} &&&& {(xE, x)} &&&& 
    \arrow["{\cork(x) \geq 3}"', maps to, from=18-5, to=18-1]
    \\
    {(x, \E, x)} &&&&  &&&&
    \arrow["{\cork(x) = 2}", maps to, from=18-5, to=19-1]
    \\
    {(\E, \E, x)} &&&& {(E^2, x)} &&&& 
    \arrow[ maps to, from=20-5, to=20-1]
    \\
    {(x, x, \E)}&&&& {(x^2, E)} &&&& 
    \arrow["{\cork(x) = 1}"', maps to, from=21-5, to=21-1]
    \\
    {(\E, x, x)}&&&&  &&&& 
    \arrow["{\cork(x) \geq 2}", maps to, from=21-5, to=22-1]
    \\
    {(\E, \E, \E)}&&&& {(E^2, E)} &&&& 
    \arrow[ maps to, from=23-5, to=23-1]
    \\
\end{tikzcd}
    \caption{The rest of the injective set map $\FY^2\times \FY^1\into \FY^1\times \FY^1\times \FY^1 \cup \FY^3$}
    \label{fig: 3x3b}
\end{figure}
\end{proof}

\section{Further questions and conjectures}
\label{sec: further-questions}
In addition to Conjectures~\ref{log-concavity-conjectures}, \ref{boolean-h-log-concavity-conj}, \ref{Burnside-Koszul-nonnegativity} above,
we collect here are a few more questions and conjectures.

\subsection{Explicit formulas for Chow rings as permutation representations?}
In \cite[Lem. 3.1]{Stembridge}, Stembridge provides a generating function for the symmetric group
representations on each graded component of the Chow ring for all Boolean matroids; see also Liao \cite{Liao, Liao_new}.  Furthermore, Stembridge's expression exhibits them as {\it permutation representations}, whose orbit-stabilizers are all {\it Young subgroups} in the symmetric group.
This prompts a slightly vague question.
\begin{question}
Can one provide similarly explicit generating function expressions as permutation representations for other families of
matroids with symmetry?
\end{question}

\subsection{Equivariant $\gamma$-positivity?}
Hilbert functions $(a_0,a_1,\ldots,a_r)$ for
Chow rings $A(\L_\M,\G_{\max})$ of rank $r+1$ simple matroids are
not only symmetric and unimodal, but satisfy the stronger condition of
{\it $\gamma$-positivity}:  one has {\it nonnegativity} for all coefficients 
$\gamma=(\gamma_0,\gamma_1,\ldots,\gamma_{\lfloor \frac{r}{2} \rfloor})$ appearing in the unique expansion
\begin{equation}
\label{gamma-defining-relation}
\sum_{i=0}^r a_i t^i = \sum_{i=0}^{\lfloor\frac{r}{2}\rfloor} \gamma_i \,\, t^i(1+t)^{r-2i}.
\end{equation}
See Athanasiadis \cite{Athanasiadis} for a nice survey on $\gamma$-positivity.
It has been shown, independently 
by Ferroni, Matherne, Stevens and Vecchi \cite[Thm. 3.25]{FMSV}
and by Wang (see \cite[p.~29]{FMSV}), that the $\gamma$-positivity for Hilbert series of Chow rings of matroids
follows from results of Braden, Huh, Matherne, Proudfoot and Wang \cite{BHMPW} on {\it semismall decompositions}.
The maximal building set is again important: the
same example of $A(\L_\M,\G_{\min})$ for $\M=U_{3,4}$ from Remark~\ref{PF-needs-max-building-set} with $(a_0,a_1,a_2)=(1,1,1)$ has $\gamma=(\gamma_0,\gamma_1)=(1,-1)$.

One also has the notion of {\it equivariant $\gamma$-positivity} for a sequence
of $\gamma$-representations $(A_0,A_1,\ldots,A_r)$, 
due originally to Shareshian and Wachs
\cite[\S 5]{ShareshianWachs} (see also \cite[\S5.2]{Athanasiadis}, \cite[Def. 4.13]{FMSV}): upon replacing each
$a_i$ in \eqref{gamma-defining-relation} with the element $[A_i]$ of $R_\C(\gamma)$, one
asks that the uniquely defined coefficients $\gamma_i$ in $R_\C(\gamma)$ have $\gamma_i \geq_{R_\C(\gamma)} 0$.
Computations suggested the next conjecture, which appeared in the first {\tt arXiv} version of this paper, and which was then proven by Hsin-Chieh Liao\footnote{Personal communication, February 2024; see also \cite[\S8]{liao-q-uniform}}.

\begin{conj}
\label{equivariant-gamma-positivity-conj}
For any matroid $\M$ of rank $r+1$ and its Chow ring
$A(\L_\M,\G_{\max})=\bigoplus_i A^i$,
the sequence of $\gamma$-representations
$(A^0_\C,A^1_\C,\ldots,A^r_\C)$ is equivariantly $\gamma$-positive.
\end{conj}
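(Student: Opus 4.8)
The plan is to upgrade to the equivariant setting the known proof that the Hilbert series $\sum_i a_i t^i$ of $A(\M)$ is $\gamma$-positive. As recalled in the text, that proof --- due to Ferroni, Matherne, Schr\"oter and Vecchi \cite{FMSV} and, independently, to Wang --- deduces $\gamma$-positivity from the semismall decomposition of $A(\M)$ of Braden, Huh, Matherne, Proudfoot and Wang \cite{BHMPW}. For a chosen non-coloop element $i \in E$, that decomposition writes $A(\M)$ as a direct sum of graded pieces, indexed by a certain family of flats, with each piece isomorphic up to an explicit degree shift to a tensor product of Chow rings of proper minors of $\M$; because the shifts are ``centered'' (this is the semismall feature), iterating the decomposition down to Boolean matroids exhibits $A(\M) \cong \bigoplus_j M_j$, where each $M_j$ is a direct sum of tensor products of Chow rings of minors, placed so that its contribution to the Hilbert series is $(\dim_\C M_j)\, t^j (1+t)^{r-2j}$; one reads off $\gamma_j = \dim_\C M_j \geq 0$. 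My proposal is to carry this out by induction on $\rk(\M)$ while tracking the action of $G = \Aut(\M)$, so that each $M_j$ becomes a genuine $G$-representation and hence $\gamma_j \geq_{R_\C(G)} 0$.

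Granting a $G$-stable form of the decomposition (see the obstacle below), the inductive step runs as follows. Since $G$ permutes the indexing family of flats, $A(\M)_\C$ splits as a direct sum over $G$-orbits of such flats, and by the usual orbit bookkeeping the summand attached to the orbit of a flat $F$ is $\mathrm{Ind}_{\mathrm{Stab}_G(F)}^{G}$ of the corresponding (shifted) piece; that piece is assembled from the Chow rings of the localizations $\M|_F$ and $\M/F$, on which $\mathrm{Stab}_G(F)$ acts. These minors have strictly smaller rank, so by the inductive hypothesis their Chow rings are equivariantly $\gamma$-positive over $R_\C(\mathrm{Stab}_G(F))$. Three easy observations then finish the step: a tensor product of two equivariantly $\gamma$-positive sequences is again equivariantly $\gamma$-positive (since $t^a(1+t)^{p} \cdot t^b(1+t)^{q} = t^{a+b}(1+t)^{p+q}$ and $([A_\bullet],[B_\bullet]) \mapsto (\sum_{i+j=\bullet}[A_i\otimes B_j])$ preserves genuineness); a centered degree shift merely shifts the $\gamma$-vector; and $\mathrm{Ind}_{\mathrm{Stab}_G(F)}^{G}$ preserves equivariant $\gamma$-positivity, being additive and sending genuine characters to genuine characters. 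Summing over the finitely many orbits gives $\gamma_j \geq_{R_\C(G)} 0$ for $A(\M)_\C$. The induction bottoms out at Boolean matroids, so one also needs the equivariant $\gamma$-positivity of $A(B_n)$ as an $\symm_n$-representation --- which should be extractable from Stembridge's explicit formula \cite{Stembridge} --- noting that genuineness of the $\gamma_j$ for $\symm_n$ restricts to any subgroup arising as a stabilizer.

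The crux --- and where genuine work is required --- is producing the $G$-equivariant semismall decomposition itself. The BHMPW decomposition is built from a non-canonical choice of non-coloop element $i \in E$, and in general $G$ fixes no element of $E$, so for a fixed $i$ the decomposition is only $\mathrm{Stab}_G(i)$-equivariant. One would need either (a) a canonical, $\Aut(\M)$-invariant refinement or ``global'' version of the decomposition --- perhaps by combining the decompositions coming from all $i$ in a single $G$-orbit, or by working in the graded M\"obius algebra, whose behavior under minors is cleaner --- or (b) an argument that, although the direct-sum splitting depends on $i$, the graded multiplicity modules $M_j$ do not, so that they are forced to be $G$-subrepresentations of $A(\M)_\C$. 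Even with such a decomposition in hand, one must check the equivariant bookkeeping: compatibility of the degree shifts and of the identification of each orbit-summand as an induced representation of $\mathrm{Stab}_G(F)$ with the $G$-action, and correct handling of localizations when $\M|_F$ or $\M/F$ is not simple. As a numerical safeguard I would verify the output against small-rank machine computations, and against Stembridge's formulas for Boolean matroids.
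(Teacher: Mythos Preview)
The statement you are attempting to prove is presented in the paper as an open \emph{conjecture}, not as a theorem; the paper offers no proof, only the remark that Shareshian--Wachs verified it for Boolean matroids.  So there is no ``paper's own proof'' to compare against, and what you have written is a sketch of a possible attack on an open problem.

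Your outline is the natural first thing to try, and you have correctly isolated the obstruction: the BHMPW semismall decomposition depends on a choice of ground-set element $i$ and is therefore only $\mathrm{Stab}_G(i)$-equivariant, not $G$-equivariant.  The auxiliary facts you list---that tensor products of equivariantly $\gamma$-positive sequences remain so, that centered shifts only shift the $\gamma$-vector, that induction preserves genuineness, and that the recursion bottoms out at Boolean matroids where Shareshian--Wachs applies---are all correct and would go through cleanly \emph{if} the decomposition were $G$-equivariant.

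The gap is that neither of your proposed fixes (a), (b) works as stated.  Fix (b) is the more concrete one, so let me explain why it fails.  The $\gamma_j$ are already canonically defined as elements of $R_\C(G)$, since the $A^i_\C$ are $G$-modules and \eqref{gamma-defining-relation} determines the $\gamma_j$ uniquely.  The semismall decomposition for a fixed $i$ shows that the restriction $\gamma_j|_{\mathrm{Stab}_G(i)}$ is a genuine $\mathrm{Stab}_G(i)$-character; varying $i$ over its $G$-orbit gives the same information for conjugate subgroups.  But a virtual $G$-character whose restriction to every point-stabilizer is genuine need not itself be genuine.  For a small example, take $G=\symm_3$ acting on $\{1,2,3\}$ and $\chi = \chi^{(2,1)} - \chi^{(3)}$, the standard $2$-dimensional irreducible minus the trivial.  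Then $\chi$ is not a genuine $\symm_3$-character, yet $\chi|_{\mathrm{Stab}(i)} = \chi|_{\symm_2}$ is the sign character, which is genuine.  So knowing genuineness on every $\mathrm{Stab}_G(i)$ is strictly weaker than what you need.  Fix (a)---producing a canonical $G$-stable version of the decomposition, or averaging the decompositions over a $G$-orbit of elements---would require genuinely new structural input about the Chow ring, and nothing in BHMPW supplies it.  Absent such an ingredient, the proposal remains a plausible strategy rather than a proof.
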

\noindent
For example, \cite[Cor. 5.4]{ShareshianWachs} verifies Conjecture~\ref{equivariant-gamma-positivity-conj} for Boolean matroids.
However, one can check that the stronger conjecture of {\it Burnside $\gamma$-nonnegativity} for $(\FY^0,\FY^1,\ldots,\FY^r)$
would  {\it fail} already for the Boolean 
matroid of rank $3$:  here
$\FY^0, \FY^2$ carry the trivial $\symm_3$ permutation representation $\mathbf{1}$ , while $\FY^1$ carries the defining $\symm_3$-permutation representation on the set $X=\{1,2,3\}$, so
$\gamma_0=[\mathbf{1}]$, but $\gamma_1=[X]-[\mathbf{1}] \not\geq_{B(\symm_3)} 0$.

\printbibliography
\end{document}